\newcolumntype{X}{>{\centering\arraybackslash$} p{0.7cm} <{$}}
\newtheorem{theorem}{Theorem}
\newtheorem{lemma}[theorem]{Lemma}
\newtheorem{corollary}[theorem]{Corollary}
\newtheorem{proposition}[theorem]{Proposition}
\newtheorem{assumption}[theorem]{Assumption}
\newtheorem{remark}[theorem]{Remark}
\theoremstyle{definition}
\newtheorem{definition}[theorem]{Definition}
\numberwithin{equation}{section}
\numberwithin{theorem}{section}
\definecolor{Base02}{HTML}{1E3668}
\Crefname{lemma}{Lemma}{Lemmas}
\Crefname{assumption}{Assumption}{Assumptions}
\renewcommand{\vec}{\mathbf}
\newcommand{\F}{\textsf{\textup{F}}}
\newcommand{\T}{\mathsf{T}}
\newcommand{\KS}{d_{\textup{KS}}}
\renewcommand{\d}{\mathrm{d}}
\newcommand{\bOne}{\mathds{1}}
\newcommand{\lmin}{\lambda_{\textup{min}}}
\newcommand{\lmax}{\lambda_{\textup{max}}}
\newcommand{\vt}[2][]{\hyperref[eqn:short_vector_def]{\color{black}\vec{t}_{#2}^{#1}}}
\newcommand{\vtol}[2][]{\hyperref[eqn:short_vector_def]{\color{black}\overline{\vec{t}}{}_{#2}^{#1}}}
\newcommand{\vd}[2][]{\hyperref[eqn:short_vector_def]{\color{black}\vec{d}_{#2}^{#1}}}
\newcommand{\vr}[2][]{\hyperref[eqn:short_vector_def]{\color{black}\vec{r}_{#2}^{#1}}}
\newcommand{\tta}{a}%{\textup{\texttt{a}}}
\newcommand{\ttb}{b}%{\textup{\texttt{b}}}
\newcommand{\mom}[1]{\hyperref[eqn:mom]{\color{black}\mathfrak{m}_{#1}}}
\newcommand{\momDel}[1]{\hyperref[eqn:mom_del]{\color{black}\mathfrak{m}_{#1}^{\Delta}}}
\newcommand{\opMax}[1]{\hyperref[eqn:op_max]{\color{black}M_{#1}}}
\newcommand{\muN}{\hyperref[eqn:VESD]{\color{black}\mu_N}}
\newcommand{\muVESD}{\hyperref[eqn:VESD]{\color{black}\mu_{\textup{VESD}}}}
\newcommand{\muk}{\hyperref[eqn:mu_N^k]{\color{black}\mu_k}}
\newcommand{\mukfp}{\hyperref[eqn:fp_mu_N^k]{\color{black}\overline{\mu}_k}}
\newcommand{\dirac}{\delta}
\DeclareMathAlphabet{\mathsfit}{T1}{\sfdefault}{\mddefault}{\sldefault}
\SetMathAlphabet{\mathsfit}{bold}{T1}{\sfdefault}{\bfdefault}{\sldefault}
\newcommand{\poly}{}
\newcommand{\ppi}{{\pi}}
\newcommand{\op}[1]{\hyperref[eqn:op]{\color{black}\poly{p}_{#1}}}
\newcommand{\opp}[1]{\hyperref[eqn:op]{\color{black}\poly{p}_{#1}'}}
\newcommand{\pT}{{\poly{T}}}
\newcommand{\pU}{{\poly{U}}}
\newcommand{\ii}{\ensuremath{\mathrm{i}}}
\DeclareMathOperator{\imag}{Im}
\DeclareMathOperator{\real}{Re}
\newcommand{\Tk}{\vec{T}_{k}}
\newcommand{\Qk}{\vec{Q}_{k}}
\newcommand{\Fk}{\hyperref[eqn:lanczos_factorization_fp]{\color{black}\vec{F}_{k}}}
\newcommand{\Hk}{\hyperref[eqn:lanczos_R_factorization_fp]{\color{black}\vec{H}_{k}}}
\newcommand{\Rk}{\hyperref[eqn:RD]{\color{black}\vec{R}_{k}}}
\newcommand{\Dk}{\hyperref[eqn:RD]{\color{black}\vec{D}_{k}}}
\newcommand{\Tkfp}{\overline{\vec{T}}_{k}}
\newcommand{\Qkfp}{\overline{\vec{Q}}{}_{k}}
\newcommand{\epslan}{\hyperref[def:lanczos_precision]{\color{black}\epsilon_{\textup{lan}}}}
\newcommand{\Fkp}{\hat{\vec{F}}_k}
\newcommand{\Hkp}{\hat{\vec{H}}_k}
\newcommand{\Tkfpp}{\hat{\overline{\vec{T}}}_{k}}
\newcommand{\epslanp}{\hat{\epsilon}_{\textup{lan}}}
\newcommand{\muNp}{\hat{\mu}_N}
\newcommand{\mukfpp}{\hat{\overline{\mu}}_k}
\title{Stability of the Lanczos algorithm on matrices with regular spectral distributions}
\author{Tyler Chen\thanks{New York University, \href{mailto:tyler.chen@nyu.edu}{\texttt{tyler.chen@nyu.edu}}}
\and Thomas Trogdon\thanks{University of Washington, \href{mailto:trogdon@uw.edu}{\texttt{trogdon@uw.edu}}}}
\date{}
\newcommand\blfootnote[1]{%
  \begingroup
  \renewcommand\thefootnote{}\footnote{#1}%
  \addtocounter{footnote}{-1}%
  \endgroup
}
\begin{document}
\maketitle
\blfootnote{\textbf{Funding.} This material is based on work supported by the National Science Foundation under Grant Nos. DGE-1762114 (TC), DMS-1945652 (TT).
Any opinions, findings, and conclusions or recommendations expressed in this material are those of the authors and do not necessarily reflect the views of the National Science Foundation.}
\begin{abstract}
We study the stability of the Lanczos algorithm run on problems whose eigenvector empirical spectral distribution is near to a reference measure with well-behaved orthogonal polynomials.
We give a backwards stability result which can be upgraded to a forward stability result when the reference measure has a density supported on a single interval with square root behavior at the endpoints.
Our analysis implies the Lanczos algorithm run on many large random matrix models is in fact forward stable, and hence nearly deterministic, even when computations are carried out in finite precision arithmetic.
Since the Lanczos algorithm is not forward stable in general, this provides yet another example of the fact that random matrices are far from ``any old matrix'', and care must be taken when using them to test numerical algorithms.
\end{abstract}

\section{Introduction}

The Lanczos algorithm is unstable in the sense that, even on the simplest problems, the output of the algorithm in finite precision arithmetic may be very different than what would have been obtained in exact arithmetic.
Despite this, the Lanczos algorithm is among the most important algorithms in numerical linear algebra and is commonly used for a wide variety of fundamental linear-algebraic tasks including approximating eigenvalues and eigenvectors, the product of a matrix function on a vector, and quadratic forms involving matrix functions.
Understanding the behavior of the Lanczos algorithm in finite precision arithmetic has been of interest since the introduction of the algorithm some 70 years ago \cite{lanczos_50,golub_oleary_89,parlett_98,meurant_06,carson_liesen_strakos_22}.

\begin{algorithm}
\caption{Lanczos algorithm}
\label{alg:lanczos}
\begin{algorithmic}[1]
    \Procedure{Lanczos}{$\vec{A}, \vec{b}, k$}
    \State \( \vec{q}_0 = \vec{b} / \|\vec{b}\| \), \( \beta_{-1} = 0 \), \( \vec{q}_{-1} = \vec{0} \)
    \For {\( n=0,1,\ldots, k-1 \)}
        \State \( \tilde{\vec{q}}_{n+1} = \vec{A} \vec{q}_{n} - \beta_{n-1} \vec{q}_{n-1} \)
        \State \( \alpha_{n} = \tilde{\vec{q}}_{n+1}^\T \vec{q}_n \)
        \State \( \hat{\vec{q}}_{n+1} = \tilde{\vec{q}}_{n+1} - \alpha_{n} \vec{q}_i \)
        \State \( \beta_{n} = \| \hat{\vec{q}}_{n+1} \| \)
        \State \( \vec{q}_{n+1} = \hat{\vec{q}}_{n+1} / \beta_{n} \)
    \EndFor
\EndProcedure
\end{algorithmic}
\end{algorithm}

Throughout, $\vec{A}$ will be an $N\times N$ real symmetric matrix and $\vec{b}$ a unit-norm vector of length $N$.
From $(\vec{A},\vec{b})$ we obtain the eigenvector empirical spectral distribution (VESD) defined by
\begin{equation}
    \label{eqn:VESD}
    \muN(\d{x}) = \muVESD(\d{x}; \vec A, \vec{b}):= \sum_{n=1}^{N} (\vec{b}^\T \vec{u}_n )^2 \:\dirac_{\lambda_n}(\d x),
\end{equation}
where $(\lambda_n, \vec{u}_n)$ are the eigenvalue-vector pairs of $\vec{A}$ and $\dirac_c$ is the Dirac delta distribution centered at $c$.
We use the former notation when $\vec A$ and $\vec{b}$ are clear from context.
When run on $(\vec{A}, \vec{b})$ for $k$ iterations \emph{in exact arithmetic}, the Lanczos algorithm (\cref{alg:lanczos}) outputs an orthonormal basis $[\vec{q}_0, \ldots, \vec{q}_k]$ for the Krylov subspace
\begin{equation*}
    \operatorname{span}\{\vec{b}, \vec{A}\vec{b}, \ldots, \vec{A}^{k} \vec{b} \}
\end{equation*}
and coefficients $(\alpha_0, \ldots, \alpha_{k-1})$, $(\beta_0, \ldots, \beta_{k-1})$ for a three-term recurrence satisfied by the basis vectors.
In matrix form, this recurrence can be written
\begin{equation}
\label{eqn:lanczos_three_term}
    \vec{A} {\Qk} = {\Qk} {\Tk} + {\beta}_{k-1} {\vec{q}}_{k} \vec{e}_{k-1}^\T,
\end{equation}
where $\vec{e}_{k-1} = [0,\ldots, 0,1]^{\T}$ and
\begin{equation}
    \label{eqn:QT}
    \Qk
    = \begin{bmatrix}
    |&|&&|\\
    \vec{q}_0 & \vec{q}_1 & \cdots & \vec{q}_{k-1}\\
    |&|&&|
    \end{bmatrix}
    ,\quad
    \Tk = \operatorname{tridiag}
    \left(\hspace{-.75em} \begin{array}{c}
        \begin{array}{cccc} \beta_0 & \beta_1 & \cdots & \beta_{k-2} \end{array} \\
        \begin{array}{ccccc} \alpha_0 & \alpha_1 & \cdots& \cdots & \alpha_{k-1} \end{array} \\
        \begin{array}{cccc} \beta_0 & \beta_1 & \cdots & \beta_{k-2} \end{array} 
    \end{array} \hspace{-.75em}\right).
\end{equation}

The Lanczos algorithm run on $(\vec{A},\vec{b})$ is mathematically equivalent to the Stieltjes procedure for computing the recurrence coefficients for the orthogonal polynomials of the VESD $\muN$ \cite{gautschi_04}.
It is common to refer to the matrix $\Tk$ as the Jacobi matrix associated with $\muN$, and from this point on, we will make no distinction between the Lanczos algorithm in exact arithmetic and Stieltjes procedure. 
The $k$-point Gaussian quadrature rule for $\muN$ will be written as $\muk$, and is equal to the VESD for $(\Tk, \vec{e}_0)$, where $\vec{e}_0 = [1,0,\ldots, 0]^\T$.
That is, 
\begin{equation}
    \label{eqn:mu_N^k}
    {\mu}_k(\d{x}) = \muVESD(\d{x}; \vec{T}_k, \vec{e}_0) := \sum_{n=1}^{k} ( \vec{e}_0^\T {\vec{s}}_{n} )^2 \:\dirac_{\theta_{n}}(\d x),
\end{equation}
where $(\theta_{n}, \vec{s}_{n})$ are the eigenvalue-vector pairs of $\Tk$.
Note that \cref{eqn:VESD,eqn:mu_N^k} coincide once $k$ is large enough that the dimension of the Krylov subspace stops growing. 
This occurs once $k$ is equal to the number of points of support for $\muN$.
However, implicit in our analysis, is the assumption $k\ll N$.

When the Lanczos algorithm is run on $(\vec{A},\vec{b})$ for $k$ iterations in \emph{finite precision arithmetic}, the vectors $[\overline{\vec{q}}_0, \ldots, \overline{\vec{q}}_k]$ and coefficients $(\overline{\alpha}_0, \ldots, \overline{\alpha}_{k-1})$, $(\overline{\beta}_0, \ldots, \overline{\beta}_{k-1})$ generated by the algorithm may be nothing like their exact arithmetic counterparts. 
Analogously to \cref{eqn:mu_N^k}, we define the VESD $\mukfp$ for  $(\Tkfp, \vec{e}_0)$ by
\begin{equation}
    \label{eqn:fp_mu_N^k}
    \mukfp(\d{x}) = \muVESD(\d{x}; \overline{\vec{T}}_k, \vec{e}_0):= \sum_{n=1}^{k} ( \vec{e}_0^\T \overline{\vec{s}}_{n} )^2 \:\dirac_{\overline{\theta}_{n}}(\d x),
\end{equation}
where $(\overline{\theta}_{n}, \overline{\vec{s}}_{n})$, $n=1, \ldots, k$ are the eigenvalues-vectors pairs of $\Tkfp$, the symmetric tridiagonal matrix with diagonal $(\overline{\alpha}_0, \ldots, \overline{\alpha}_{k-1})$ and sub/super-diagonals $(\overline{\beta}_0, \ldots, \overline{\beta}_{k-2})$.

In numerical analysis, there are a number of notions of stability. 
Arguably, the most common are forward stability and backward stability, which we now describe in the context of the Lanczos algorithm.
\begin{definition}
The Lanczos algorithm run for $k$ iterations in finite precision arithmetic on an input $(\vec{A},\vec{b})$ to obtain output $\Tkfp$ is
\begin{itemize}[topsep=.15em,itemsep=.15em]
    \item \emph{forward stable} if $\Tkfp$ is near $\Tk$, the output of exact Lanczos run on $(\vec{A},\vec{b})$, and
    \item \emph{backward stable} if $\Tkfp$ is the Jacobi matrix for a nearby input $(\vec{A}_*,\vec{b}_*)$; that is, if exact Lanczos run on $(\vec{A}_*,\vec{b}_*)$ produces $\Tkfp$.
\end{itemize}
\end{definition}
For the purposes of this paper, \emph{nearby} is understood to mean differing by an amount with a polynomial dependence on $k$ and the machine precision $\epsilon_{\textup{mach}}$ (in some reasonable metric).
Ideally, the dependence on $\epsilon_{\textup{mach}}$ is linear, and when $\epsilon_{\textup{mach}} = 0$, the exact arithmetic behavior is recovered.
As with most stability analyses of the Lanczos algorithm, the value of our work is not in the numerical value of the bounds themselves, but rather in the intuition the bounds convey.
For instance, situations in which our bounds depend exponentially on $k$ provide insight into problems on which the Lanczos algorithm is potentially unstable.
In line with this philosophy, we will not attempt to optimize polynomial dependencies in $k$; instead, we aim to minimize the complexity of the statements and proofs of our results.

As noted above, understanding the stability of the Lanczos algorithm in finite precision arithmetic has been an active area of the research for the past half century.
Perhaps the most well-known work is that of Paige \cite{paige_71,paige_76,paige_80} (which we discuss further in \cref{sec:paige}) and Greenbaum \cite{greenbaum_89}.
In addition, a number of books and notes contain extensive writing on the topic \cite{parlett_98,meurant_06}.

Greenbaum's analysis, which is the preeminent backwards stability analysis of the Lanczos algorithm, proves the existence of a nearby problem $(\vec{A}_*,\vec{b}_*)$ such that, when Lanczos is run on $(\vec{A}_*,\vec{b}_*)$ for $k$ iterations in exact arithmetic, $\Tkfp$ is output.
Here nearby roughly means (i) every eigenvalue of $\vec{A}_*$ is near an eigenvalue of $\vec{A}$, and (ii) $\muVESD(\cdot\,; \vec{A}_*, \vec{b}_*)$ 
is near to $\muN = \muVESD(\cdot\,; \vec A, \vec{b})$.
This result is very strong in that it applies to any input $(\vec{A},\vec{b})$.
The main drawbacks are that the nearby problem $(\vec{A}_*,\vec{b}_*)$ is of a different dimension than the original problem, and the precise definition of nearby has a sub-linear dependence on the machine precision which is generally believed to be pessimistic. 
In addition, the proofs of the result are quite technical.

Another important stability result, which seems to have been mostly overlooked by the numerical analysis community, is  Knizhnerman's analysis of the modified Chebyshev moments of $\mukfp$ \cite{knizhnerman_96}.
In particular, Knizhnerman shows that the modified Chebyshev moments of $\mukfp$ are near those of $\muk$.
This paper extends Knizhnerman's work.

\subsection{Motivation}
Testing numerical algorithms on random matrices is a widespread practice.
However, as noted by Edelman and Rao \cite{edelman_rao_05}, 
\begin{quote}
It is a mistake to link psychologically a random matrix with the intuitive notion of a `typical' matrix or the vague concept of `any old matrix'.
\end{quote}
In particular, numerical algorithms run on random matrices may fail to capture the typical behavior of the algorithm on an arbitrary matrix.
The Lanczos algorithm is a clear example of this.
While the algorithm is not forward stable in general, when run on a large random matrix, drawn from a suitable distribution, $\Tkfp$ matches closely to $\Tk$, at least while number of iterations $k$ is sufficiently small compared to the dimension $N$. 
We illustrate this phenomenon numerically in \cref{fig:motivating_experiment}.

\begin{figure}[htb]
    \centering
    \begin{subfigure}[t]{0.48\textwidth}
         \centering
         \includegraphics[scale=.7]{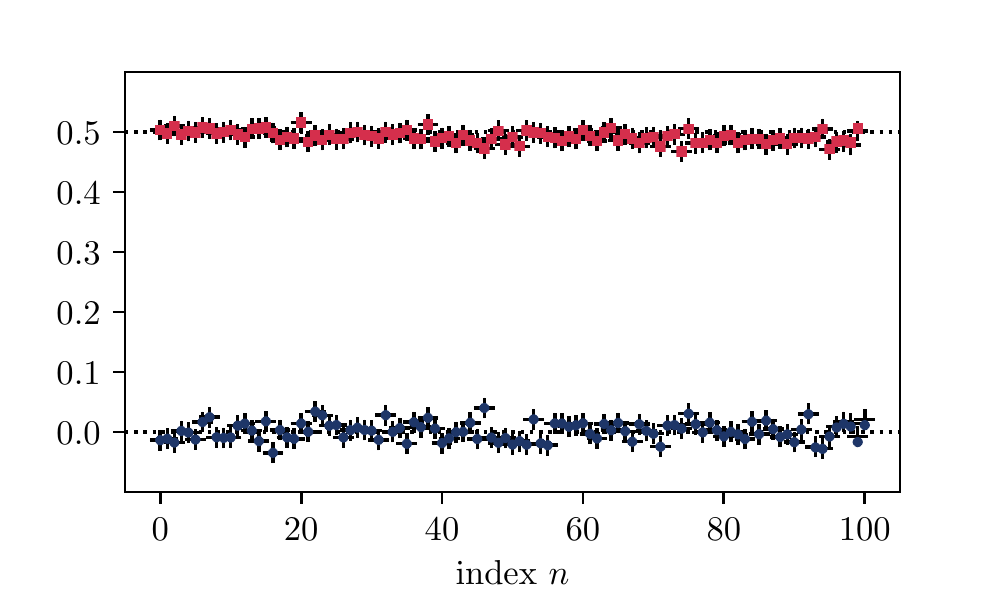}
         \caption{Recurrence coefficients $\overline{\alpha}_n$ \textup{(\includegraphics[scale=.8]{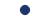})} and $\overline{\beta}_n$ \textup{(\includegraphics[scale=.8]{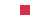})}. Exact arithmetic counterparts shown as pluses \textup{(\includegraphics[scale=.8]{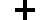})} and limiting values shown as dotted lines \textup{(\includegraphics[scale=.8]{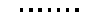})}.}
         \label{fig:motivating_experiment_coeff}
    \end{subfigure}
    \hfill
    \begin{subfigure}[t]{0.48\textwidth}
         \centering
         \includegraphics[scale=.7]{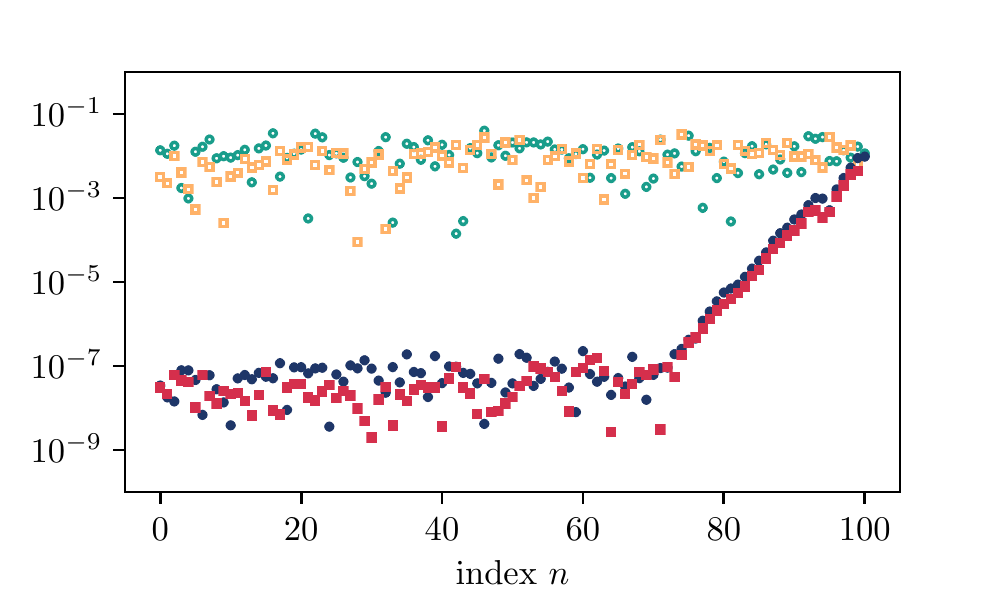}
         \caption{Forward error of recurrence coefficients $|\alpha_n - \overline{\alpha}_n|$ \textup{(\includegraphics[scale=.8]{imgs/legend/circ.pdf})} and $|\beta_n - \overline{\beta}_n|$ \textup{(\includegraphics[scale=.8]{imgs/legend/square_red.pdf})}
         and distance to limiting values $|0 - \overline{\alpha}_n|$ \textup{(\includegraphics[scale=.8]{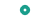})} and $|1/2 - \overline{\beta}_n|$ \textup{(\includegraphics[scale=.8]{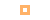})}.}
         \label{fig:motivating_experiment_coeff_fe}
    \end{subfigure}
    \caption{Here $(\vec{A},\vec{b})$ corresponds to a $2000\times 2000$ random matrix, drawn from the Gaussian orthogonal ensemble (see \cref{sec:wigner}), and independent vector.
    In the large $N$ limit, the VESD of matrices drawn from this ensemble converge to the semicircle distribution on $[-1,1]$ (density $\propto \sqrt{1-x^2}$).
    Therefore the Lanczos coefficients $\alpha_i$ and $\beta_i$ from the ``exact'' computation (with reorthogonalization in quadruple precision arithmetic) respectively converge to $1/2$ and $0$; i.e. the Lanczos algorithm exhibits deterministic behavior.
    In our particular experiment we observe fluctuations on the order of $10^{-2}$ around the limiting values due to finite $N$ effects.
    Remarkably, the coefficients $\overline{\alpha}_n$ and $\overline{\beta}_n$ output by the Lanczos algorithm run in \emph{single precision} floating point arithmetic without reorthogonalization are within the unit roundoff ($\approx 10^{-7}$) of $\alpha_n$ and $\beta_n$, at least while $n$ is sufficiently small; i.e. the algorithm is forward stable.
}
    \label{fig:motivating_experiment}
\end{figure}

The aim of this paper is to provide an intuitive explanation for the observation that the Lanczos algorithm is stable on problems whose VESD are sufficiently regular.
More specifically, our approach extends the work of Knizhnerman \cite{knizhnerman_96} to prove the existence of a measure $\mu_*$ near to $\muN$ whose moments agree with $\mukfp$ through degree $2k-1$, at least when the VESD of $(\vec{A},\vec{b})$ is sufficiently regular.
In fact, under certain regularity conditions, we show there exists a vector $\vec{b}_*$ near to $\vec{b}$ such that Lanczos run on $(\vec{A},\vec{b}_*)$ in exact arithmetic for $k$ iterations outputs $\Tkfp$.
In other words, on a restricted set of inputs, we provide a simpler proof for a stronger version of Greenbaum's results.
We then provide forward stability results by analyzing the orthogonal polynomials of slightly perturbed measures.
This shows that, on many large random matrix models, the output of the Lanczos algorithm is nearly deterministic, even when computations are carried out in finite precision arithmetic.
Our analysis is accompanied by numerical experiments and several explicit examples.

\subsection{Notation}

Throughout this work, we use $\Lambda(\vec A)$ to refer to the spectrum of a matrix.   For a function $f: U \to \mathbb C$ with $S \subseteq U$, we define $\|f\|_S := \sup_{x \in S} |f(x)|$.  For a vector $\vec{b}$, $\|\vec{b}\|$ refers to the Euclidean 2-norm and $\|\vec A\|$ gives the associated induced operator norm for a matrix $\vec A$.
The $n$-th canonical basis vector, indexed from 0, is $\vec{e}_n$.
The Kolmogorov--Smirnov distance between two measures $\nu_1$ and $\nu_2$ is $\KS(\nu_1,\nu_2) := \sup_{x\in\mathbb{R}} |\nu_1((-\infty,x]) - \nu_2((-\infty,x])$.  All measures we consider will be Borel measures.  Indeed, all measures will be either fully discrete or have a continuous density.

\section{Setup and background}
\label{sec:setup}

Let $\mu$ be a unit-mass measure with support contained in $[\tta,\ttb]$.
We will refer to $\mu$ as the \emph{reference measure}, and it will be helpful to think of $\mu$ as near to $\muN$; for instance $\mu = \muN$ or $\mu$ being the limiting measure for the VESD of a large random matrix ensemble.
In particular, we will typically have $[\tta,\ttb]\approx [\lmin(\vec{A}),\lmax(\vec{A})]$. 
We denote by $\op{n} = \op{n}(\cdot\,;\mu)$, $n\geq 0$ the orthonormal polynomials for $\mu$. \label{eqn:op}
That is, the $\op{n}(\cdot\,;\mu)$ satisfy\footnote{These polynomials are constructed by performing Gram--Schmidt on the monomial basis in order of increasing degree and are normalized to have a positive leading coefficient.}
\[ 
\int \op{n}(x;\mu) \op{i}(x;\mu) \mu(\d{x}) = \bOne(n=i),
\]
where $\bOne(\texttt{true}) = 1$ and $\bOne(\texttt{false}) = 0$.

The modified moments of a measure $\nu$ with respect to the orthogonal polynomials of $\mu$ are defined by
\begin{equation}
\label{eqn:mom}
    \mom{n}(\nu;\mu) := \int \op{n}(x;\mu) \nu(\d x)
    ,\qquad
    n\geq 0.
\end{equation}
Clearly $\mom{n}(\mu;\mu) = \bOne(n=0)$ and $\mom{0}(\mu;\mu) = \mom{0}(\muN;\mu) = \mom{0}(\muk;\mu) = \mom{0}(\mukfp;\mu) = 1$.

As mentioned in the introduction, \cite{knizhnerman_96} shows that the modified moments of $\muN$ and $\mukfp$ through degree $2k-1$ are close when $\mu$ is a properly scaled and shifted version of the orthogonality measure for the Chebyshev polynomials of the first kind.
A similar statement, with some polynomial losses in $k$, can therefore be expected to hold for any $\mu$ whose orthogonal polynomials have a Chebyshev series representation with reasonable coefficients.

The idea underlying our analysis is to construct a (potentially signed) measure $\mu_*$ as a perturbation to the reference measure $\mu$:
\begin{equation}
\label{eqn:new_measure}
    \mu_*(\d{x}) :=  \left( 1 + \poly{h}(x) \right)\mu(\d{x})
    ,\quad 
    \poly{h}(x) :=\sum_{n=0}^{2k-1} \big(\mom{n}(\mukfp;\mu) - \mom{n}(\mu;\mu) \big) \op{n}(x;\mu).
\end{equation}
This construction ensures $\mu_*$ has the same moments as $\mukfp$ through degree $2k-1$ and the same moments as $\mu$ for higher degrees.
Indeed, by definition, the $\op{n}$ are orthonormal with respect to $\mu$, so 
\begin{align*}
    \mom{n}(\mu_*;\mu) &= \int \op{n}(x;\mu) \mu_*(\d{x})
    \\&= \int \op{n}(x;\mu) \mu(\d{x})
    + \int \op{n}(x;\mu)  \sum_{i=0}^{2k-1} \big(\mom{n}(\mukfp;\mu) - \mom{n}(\mu;\mu) \big) \op{i}(x;\mu) \mu (\d{x})
    \\&= \mom{n}(\mu;\mu) + \sum_{i=0}^{2k-1} \big(\mom{n}(\mukfp;\mu) - \mom{n}(\mu;\mu) \big)
    \int \op{n}(x;\mu) \op{i}(x;\mu) \mu (\d{x}) 
    \\&= \mom{n}(\mu;\mu) + \sum_{i=0}^{2k-1} \big(\mom{n}(\mukfp;\mu) - \mom{i}(\mu;\mu) \big) \bOne(i = n)
    \\&= 
    \begin{cases}
    \mom{n}(\mukfp;\mu) & i =0,1,\ldots, 2k-1 \\
    \mom{n}(\mu;\mu) & i=2k, 2k+1, \ldots .
    \end{cases}
\end{align*}
Since the moments of $\mu_*$ match those of $\mukfp$ through degree $2k-1$, when the Stieltjes procedure is run on $\mu_*$ for $k$ iterations, $\Tkfp$ is the output.

Introduce the quantities
\begin{align}
\opMax{k}(\mu; [\tta,\ttb]) &:= \max_{n\leq 2k-1} \|\op{n}(\cdot\,;\mu)\|_{[\tta,\ttb]}\label{eqn:op_max}
,\\
\momDel{k}(\nu_1,\nu_2;\mu) &:= \max_{n\leq 2k-1} |\mom{n}(\nu_1;\mu) - \mom{n}(\nu_2;\mu) |.\label{eqn:mom_del}
\end{align}
Clearly 
\begin{equation}
\label{eqn:h_bd}
    \|\poly{h}\|_{[\tta,\ttb]} 
    \leq \sum_{i=0}^{2k-1} |\mom{n}(\mukfp;\mu) - \mom{n}(\mu;\mu)| \opMax{k}(\mu; [\tta,\ttb])
    \leq 2k \momDel{k}(\mukfp,\mu;\mu) \opMax{k}(\mu; [\tta,\ttb]),
\end{equation}
so if $\momDel{k}(\mukfp,\mu;\mu)$ is sufficiently small relative to the reciprocal of $k \opMax{k}(\mu; [\tta,\ttb])$, then $\|h\|_{[\tta,\ttb]} \leq 1$ and $\mu_*$ is a well-defined non-negative measure.
In this case, if $\mu\approx \muN$ then we also have $\mu_* \approx \muN$; i.e. backwards stability.

\begin{remark}\label[remark]{rem:backwards_finite}
If we take $\mu = \muN$, then, assuming $\|\poly{h}\|_{\Lambda(\vec{A})} \leq 1$, $\mu_*$ is the VESD of $(\vec{A},\vec{b}_*)$, where 
\begin{equation*}
    \vec{b}_* =  (\vec{I} + \poly{h}(\vec{A}))^{1/2} \vec{b}.
\end{equation*}
This is a perturbation of $\vec{b}$ in the sense that 
\begin{equation*}
    \|\vec{b} - \vec{b}_*\| \leq \| \vec{I} - (\vec{I} + \poly{h}(\vec{A}))^{1/2} \| \| \vec{b}\|
    \leq \| \poly{h}(\vec{A}) \|.
\end{equation*}
\end{remark}
In \cref{fig:GOE_backwards} we illustrate this approach for $\mu = \muN$, where $\vec{A}$ is the same random matrix as used in \cref{fig:motivating_experiment}.
Bounds for $\mu_*$ are derived in \cref{sec:backwards,sec:forwards}.

\begin{figure}[ht]
    \centering
    \begin{subfigure}[t]{0.48\textwidth}
         \centering
         \includegraphics[scale=.7]{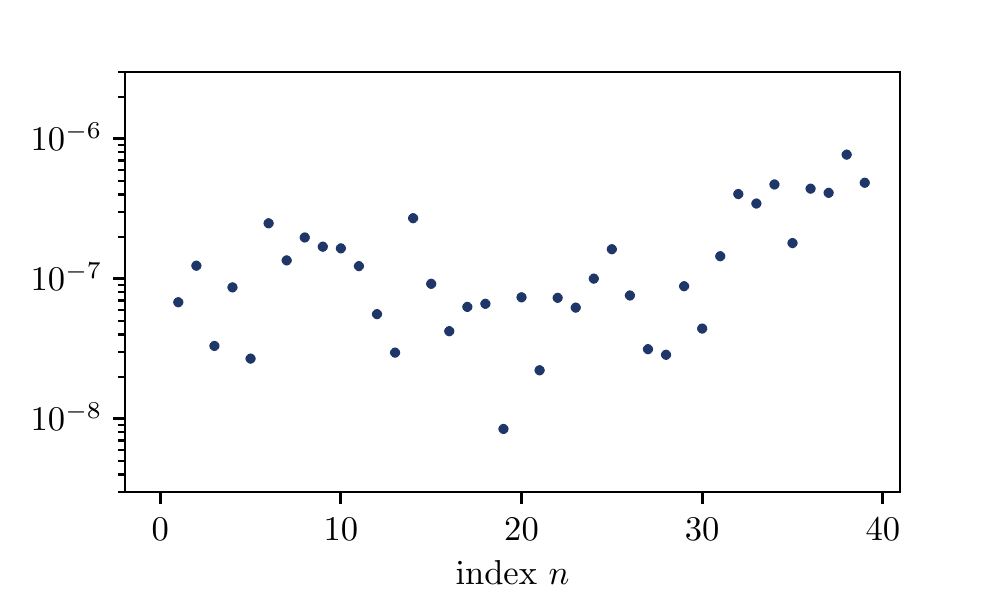}
         \caption{Forward error of modified moments $|\mom{n}(\muN;\muN) - \mom{n}(\mukfp;\muN)|$ \textup{(\includegraphics[scale=.8]{imgs/legend/circ.pdf})}.
         Note that $\mom{n}(\muN;\muN) = \mom{n}(\muk;\muN)$ for $n\leq 2k-1$.}
         \label{fig:GOE_backwards_moments} 
    \end{subfigure}
    \hfill
    \begin{subfigure}[t]{0.48\textwidth}
         \centering
         \includegraphics[scale=.7]{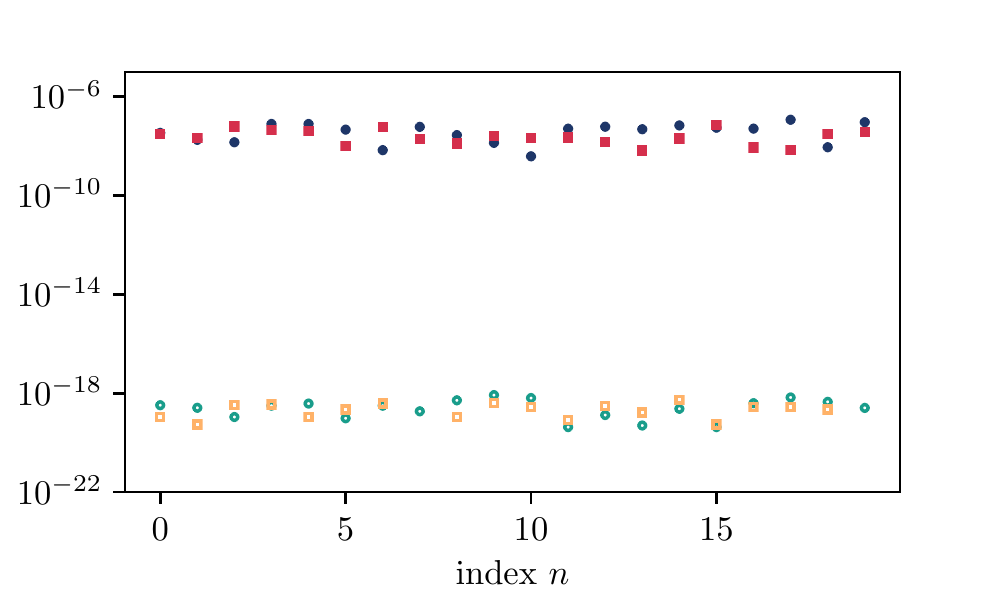}
         \caption{Forward error of recurrence coefficients $|\alpha_n - \overline{\alpha}_n|$ \textup{(\includegraphics[scale=.8]{imgs/legend/circ.pdf})} and $|\beta_n - \overline{\beta}_n|$ \textup{(\includegraphics[scale=.8]{imgs/legend/square_red.pdf})} and $|\alpha_n^* - \overline{\alpha}_n|$ \textup{(\includegraphics[scale=.8]{imgs/legend/circ_green.pdf})} and $|\beta_n^* - \overline{\beta}_n|$ \textup{(\includegraphics[scale=.8]{imgs/legend/square_yellow.pdf})}, where $\alpha_n^*,\beta_n^*$ correspond to an ``exact'' computation with $(\vec{A}_*,\vec{b}_*)$.}
         \label{fig:GOE_backwards_coeff_fe}
    \end{subfigure}
    \begin{subfigure}[b]{1\textwidth}
         \centering
         \includegraphics[scale=.7]{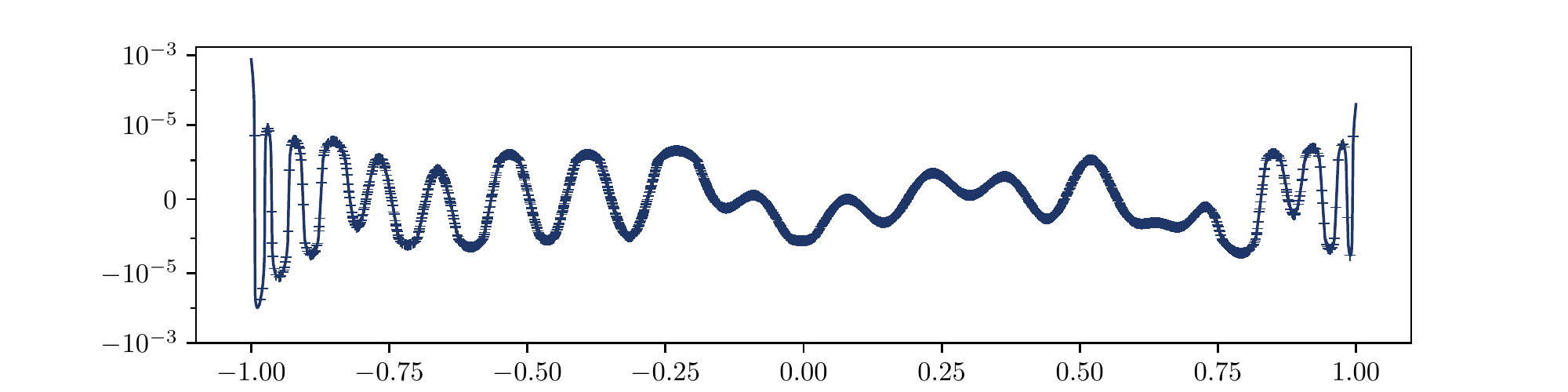}
         \caption{Perturbation function $\poly{h}(x)$ \textup{(\includegraphics[scale=.8]{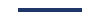})} and value at eigenvalues of $\vec{A}$ \textup{(\includegraphics[scale=.8]{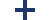})}.}
         \label{fig:GOE_backwards_h}
    \end{subfigure}
    \caption{We use the same $2000\times 2000$ random matrix $\vec{A}$ and fixed vector $\vec{b}$ from \cref{fig:motivating_experiment}.
    After running the Lanczos algorithm on $(\vec{A},\vec{b})$ in single precision finite precision arithmetic without reorganization, we use \cref{eqn:new_measure} and \cref{rem:backwards_finite} to construct a slightly perturbed $\vec{b}_*$. % by $\vec{b}_* = (\vec{I}+\poly{h}(\vec{A}))^{1/2}\vec{b}$.
    Lanczos run on $(\vec{A},\vec{b}^*)$ ``exactly'' (with reorthogonization in quadruple precision floating point arithmetic) produces a nearly identical output as the original single precision finite precision computation.
    For reference, $\|\vec{b} - \vec{b}_*\|\approx 3.6\cdot 10^{-5}$ is only a few orders of magnitude above the machine precision in which the original computation was carried out.
    }
    \label{fig:GOE_backwards}
\end{figure}

There is a fundamental equivalence between the Jacobi matrix $\Tk$ produced by the Stieltjes procedure and the modified moments of $\muN$ through degree $2k-1$ with respect to some fixed measure.\footnote{These quantities are also equivalent to the $k$-point Gaussian quadrature rule $\muk$ for $\muN$.}
The conditioning of the map from moments to Jacobi matrix is very poor in general \cite{gautschi_82,fischer_96,gautschi_04,oleary_strakos_tichy_07}, so even if the modified moments of $\muN$ and $\mukfp$ are close, this does not generally imply the corresponding Jacobi matrices $\Tk$ and $\Tkfp$ are close.
However, in certain situations when $\muN$ is sufficiently regular, then the conditioning of the map from modified moments to Jacobi matrices is well-conditioned \cite{fischer_96} and we can expect the Lanczos algorithm to be forward stable.
Bounds for $\Tkfp$ are derived in \cref{sec:forwards}.

\subsection{Perturbed Lanczos recurrences}
\label{sec:paige}

To carry out our analysis, we require some understanding of the behavior of the Lanczos methods in finite precision arithmetic.
Much is known about this topic \cite{parlett_98,meurant_06}, but we summarize only what is needed for our analysis.

The finite precision arithmetic outputs $\Qkfp$ and $\Tkfp$ no longer satisfy the three-term Lanczos recurrence \cref{eqn:lanczos_three_term} exactly. 
Instead, they satisfy a perturbed recurrence
\begin{equation}
    \label{eqn:lanczos_factorization_fp}
    \vec{A} \Qkfp = \Qkfp \Tkfp + \overline{\beta}_{k-1} \overline{\vec{q}}_{k} \vec{e}_{k-1}^\T + \Fk,
\end{equation}
where the perturbation term $\Fk$ accounts for local rounding errors made by the algorithm.
Since $\vec{F}_{k}$ does not involve accumulated errors, but rather errors made over a single iteration of the Lanczos algorithm, it can intuitively be expected to be small.

We denote by \( \Rk \) and $\Dk$ the strictly upper triangular and diagonal parts of \( \Qkfp^\T\Qkfp \) so that $\Qkfp^\T\Qkfp = \Rk + \Rk^\T + \Dk$; i.e. 
\begin{equation}
    \label{eqn:RD}
    \Rk := \operatorname{strict-triu}(\Qkfp^\T\Qkfp)
    ,\qquad
    \Dk := \operatorname{diag}(\Qkfp^\T\Qkfp).
\end{equation}
Since $\Dk-\vec{I}$ accounts for errors made when normalizing the Lanczos basis vectors, we expect it to be small.
On the other hand, since rounding errors accumulate in the columns of $\Qkfp$, the entries of $\Rk$ need not be small. 
In fact, in many situations $\Rk$ can have entries of size $O(1)$ indicating a complete loss of orthogonality in the Lanczos basis vectors.

The matrix $\Rk$ satisfies a perturbed three-term recurrence 
\begin{equation}
    \label{eqn:lanczos_R_factorization_fp}
    \Tkfp \Rk = \Rk\Tkfp  + \overline{\beta}_{k-1}\Qkfp^\T \overline{\vec{q}}_k\vec{e}_{k-1}^\T + \Hk,
\end{equation}
with an upper triangular perturbation term $\Hk$. 
Straightforward algebraic manipulations of \cref{eqn:lanczos_factorization_fp} using \cref{eqn:RD} show that $\Hk$ should be expected to be small as well.

Finally, we define $\eta_k\geq0$ to be the smallest value such that
\begin{equation}
    \label{eqn:eta}
    \Lambda(\Tkfp) \subseteq [ \lmin(\vec{A}) - \eta_k, \lmax(\vec{A}) + \eta_k].
\end{equation}

\begin{definition}\label[definition]{def:lanczos_precision}
We say the Lanczos algorithm was run for $k$ iterations with precision
 $\epslan$ if
\begin{equation*}
    \| \Fk \| \leq \| \vec{A} \| \epslan
    ,\qquad
     \| \Dk - \vec{I} \| \leq \epslan
    ,\qquad
    \| \Hk \| \leq \| \vec{A} \| \epslan
    ,\qquad
    \eta_k \leq \| \vec{A} \| \epslan
\end{equation*}
where $\Fk$, $\Dk$, $\Hk$, and $\eta_k$ are defined in \cref{eqn:lanczos_factorization_fp,eqn:RD,eqn:lanczos_R_factorization_fp,eqn:eta}.
\end{definition}

Bounds for $\|\vec{F}_k\|$, $\|\vec{D}_k\|$, and $\|\vec{H}_k\|$ appear in \cite{paige_71,paige_76} and the most well-known bound for $\eta_k$ appears in \cite{paige_80}. 
More recently, Paige has shown a bound for $\eta_k$ \cite[Theorem A.1]{paige_19} which, when combined with \cite[Theorem 3.1]{paige_10} improves the dependence on $k$ in the bound for $\eta_k$ over \cite{paige_80}.

\begin{proposition}[informal; see \cite{paige_70,paige_80}]\label[proposition]{thm:paige}
When the Lanczos algorithm is run for $k$ iterations on a computer with relative machine precision $\epsilon_{\textup{mach}} < O(1/k)$, then 
\begin{equation*}
\epslan = \max\left\{ N , \frac{\| |\vec{A}|_{\textup{entry}} \|}{ \| \vec{A} \|} \operatorname{row-nnz}(\vec{A}) \right\} O(\operatorname{poly}(k)\epsilon_{\textup{mach}}).
\end{equation*}
Here $\operatorname{row-nnz}(\vec{A})$ is the largest number of nonzero entries in a row of $\vec{A}$ and $|\vec{A}|_{\textup{entry}}$ is the entry-wise absolute value of $\vec{A}$; i.e. $[|\vec{A}|_{\textup{entry}}]_{i,j} = |[\vec{A}]_{i,j}|$.
\end{proposition}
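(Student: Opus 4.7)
The plan is to assemble the bound on $\epslan$ by analyzing, one at a time, each of the four quantities in \cref{def:lanczos_precision}, and the work essentially reduces to standard Wilkinson-style floating-point error analysis, together with Paige's eigenvalue-localization argument for $\eta_k$.

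For $\|\Fk\|$, I would proceed iteration by iteration. In a single pass of \cref{alg:lanczos} the only nontrivial sources of rounding error are (i) the matrix-vector product $\vec{A}\vec{q}_n$, (ii) the axpy-like subtraction $\vec{A}\vec{q}_n-\overline{\beta}_{n-1}\vec{q}_{n-1}$, (iii) the inner product defining $\overline{\alpha}_n$, (iv) the subtraction $\tilde{\vec{q}}_{n+1}-\overline{\alpha}_n\vec{q}_n$, and (v) the norm and division that produce $\overline{\beta}_n$ and $\vec{q}_{n+1}$. Standard bounds give, for a sparse matrix-vector product, a coordinatewise error of order $\operatorname{row-nnz}(\vec{A})\,\epsilon_{\textup{mach}}$ times $|\vec A|_{\textup{entry}}|\vec{q}_n|$, whose 2-norm is at most $\operatorname{row-nnz}(\vec{A})\,\||\vec{A}|_{\textup{entry}}\|\,\epsilon_{\textup{mach}}$ (up to a low-degree polynomial in $k$ once I absorb various constants and the fact that $\|\vec{q}_n\|\approx1$). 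The axpy, inner product, and norm steps each contribute errors of order $N\epsilon_{\textup{mach}}\|\vec{A}\|$ from the summations involved. Collecting these and identifying $\Fk$ as the aggregate local residual then yields the stated form for $\|\Fk\|$, with the $\max$ arising because the two dominant contributions have prefactors $N$ and $\operatorname{row-nnz}(\vec{A})\,\||\vec{A}|_{\textup{entry}}\|/\|\vec{A}\|$, respectively.

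Next, for $\|\Dk-\vec{I}\|$ I would use that each $\vec{q}_n$ is obtained by dividing by a floating-point norm, so $\|\vec{q}_n\|^2-1$ is at most the backward error of that norm-and-divide, a quantity of size $N\epsilon_{\textup{mach}}$; the diagonal of $\Qkfp^\T\Qkfp-\vec{I}$ is bounded entrywise by this. For $\|\Hk\|$, I would take \cref{eqn:lanczos_factorization_fp}, multiply on the left by $\Qkfp^\T$, exploit $\Qkfp^\T\overline{\vec{q}}_k \vec{e}_{k-1}^\T$ being explicitly present, and subtract the transpose to kill the symmetric part; what remains on the upper triangular side is exactly $\Hk$, expressed as a combination of $\Qkfp^\T \Fk$, $(\Dk-\vec{I})\Tkfp$, and commutators involving $\Rk$, all of which are already controlled by the previous bullets (up to polynomial-in-$k$ losses that I would not try to track sharply).

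The main obstacle is the bound on $\eta_k$. Unlike the first three quantities, a direct propagation argument is circular because $\Tkfp$ can look arbitrarily bad when orthogonality is lost. The plan here is to invoke Paige's result: from \cref{eqn:lanczos_factorization_fp} and \cref{eqn:lanczos_R_factorization_fp} one derives that any eigenpair $(\overline\theta,\overline{\vec s})$ of $\Tkfp$ with $\overline\theta$ outside $[\lmin(\vec{A}),\lmax(\vec{A})]$ produces an approximate eigenvector $\Qkfp\overline{\vec s}$ of $\vec A$ whose residual is controlled by $\|\Fk\|$ and $\|\Dk-\vec I\|$ scaled by a polynomial in $k$; a Temple-/Kato-type inequality then forces $\overline\theta$ to lie within the same polynomial-in-$k$ multiple of $\|\vec A\|\epsilon_{\textup{mach}}$ of the spectrum of $\vec A$. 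This is precisely the content of \cite{paige_80} (and the sharper \cite{paige_19,paige_10} versions), so in the write-up I would cite rather than reprove it, and conclude by taking the maximum of the prefactors across the four bullets to produce a single $\epslan$ of the claimed form.
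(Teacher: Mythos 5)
The paper does not supply a proof of this proposition at all: it is explicitly labeled ``informal,'' and the surrounding text simply points to \cite{paige_70,paige_80} (and to \cite{paige_19,paige_10} for the sharper $\eta_k$ bound). Your sketch therefore goes beyond what the paper does, but it is consistent with the content of the cited works: the Wilkinson-style local error accounting for $\|\Fk\|$, $\|\Dk-\vec{I}\|$, and $\|\Hk\|$, with the $\operatorname{row-nnz}(\vec{A})\,\||\vec{A}|_{\textup{entry}}\|/\|\vec{A}\|$ prefactor coming from the sparse matrix--vector product and the $N$ prefactor coming from the $N$-term inner products and norm computations, is the right structure, and you correctly single out $\eta_k$ as the one quantity that cannot be bounded by a purely local argument and must be handled by Paige's eigenvalue-localization result. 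One small inaccuracy worth flagging: the axpy-type operations ($\tilde{\vec{q}}_{n+1}-\overline{\alpha}_n\overline{\vec{q}}_n$ and similar) are componentwise and do not themselves contribute an $N$ factor — that factor comes only from the inner product defining $\overline{\alpha}_n$ and from the norm defining $\overline{\beta}_n$, which in turn mainly affect $\Dk-\vec{I}$ and $\Tkfp$ rather than the residual columns of $\Fk$. Since the proposition takes a $\max$ of the two prefactors and permits an unspecified $\operatorname{poly}(k)$ loss, this imprecision does not affect the stated conclusion.
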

Paige's analysis is far more precise than \cref{thm:paige}.
In particular the analyses result in explicit bounds on the powers of $k$ and the constants in front of each of the quantities in \cref{def:lanczos_precision}.
In Paige's analyses, terms of order $(\epsilon_{\textup{mach}})^2$ are typically discarded for clarity, but the results are essentially the same if the higher order terms are accounted for.

\begin{remark}
It always holds that $\||\vec{A}|_{\textup{entry}} \| \leq N^{1/2} \| \vec{A} \|$, so for uniformly sparse matrices with up to $O(N^{1/2})$ entries per row, the Lanczos algorithm is run with precision $\epslan$ if $\epsilon_{\textup{mach}} = O(\epslan/(\operatorname{poly}(k) N) )$.
\end{remark}

\section{Backwards stability}
\label{sec:backwards}

Our first main result shows that constructing $\mu_*$ as in \cref{eqn:new_measure} gives a nearby problem to $\mu_N$ when the reference measure $\mu$ is chosen suitably.
\begin{restatable}[Backwards stability]{theorem}{thmBackwards}\label{thm:backwards}
There exist absolute constants $C, D$ such that, for $(\vec{A},\vec{b})$ with VESD $\muN$ and any unit-mass measure $\mu$ with support contained in $[\tta,\ttb]$, the following statement holds:

Suppose Lanczos is run on $(\vec{A},\vec{b})$ for $k\geq 1$ iterations with precision $\epslan < 1/(\sigma Ck^2)$, where $\sigma :=\max\left\{ 1, {2\| \vec{A} \|}/{(\ttb-\tta)} \right\}$, to produce $\mukfp$ and
\[\operatorname{supp}(\muN) \subseteq [\tta-(\ttb-\tta)/(32k^2),\ttb+(\ttb-\tta)/(32k^2)].\]
Then the (possibly-signed) measure $\mu_*$ constructed in \cref{eqn:new_measure} is close to $\mu$ in the sense that
\begin{enumerate}[label=(\alph*),topsep=.15em,itemsep=.15em]
\item \label{thm:backwards:nearby_input}
$\momDel{k}(\mu_*,\muN;\mu) \leq  D \sigma \opMax{k}(\mu;[\tta,\ttb]) k^{3} \epslan$, and
\item \label{thm:backwards:h_bd}
$\|\poly{h}\|_{[\tta,\ttb]} \leq 2k\opMax{k}(\mu;[\tta,\ttb])    (\momDel{k}(\mu_*,\muN;\mu)+\momDel{k}(\muN,\mu;\mu))$
\end{enumerate}
Furthermore, provided that $\| h \|_{[\tta,\ttb]} < 1$, $\mu_*$ is non-negative measure whose moments through degree $2k-1$ exactly match those of $\mukfp$.
\end{restatable}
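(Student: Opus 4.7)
My plan is to split the result into the routine part (b) together with the final sentence, and the substantive part (a). Part (b) is essentially bookkeeping: by the construction in \cref{eqn:new_measure}, $\mu_*$ and $\mukfp$ have matching modified moments through degree $2k-1$, so $\momDel{k}(\mu_*,\mu;\mu) = \momDel{k}(\mukfp,\mu;\mu)$, which via the triangle inequality is at most $\momDel{k}(\mu_*,\muN;\mu) + \momDel{k}(\muN,\mu;\mu)$; substituting into the already-established bound \cref{eqn:h_bd} gives (b). The concluding sentence is equally direct: when $\|h\|_{[\tta,\ttb]} < 1$ the density $1+h$ is positive on $[\tta,\ttb]$, so $\mu_*$ is a non-negative measure, and the computation preceding the theorem statement already verifies $\mom{n}(\mu_*;\mu) = \mom{n}(\mukfp;\mu)$ for $n \leq 2k-1$.

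The heart of the proof is part (a), which I reduce to a Chebyshev-moment estimate in the spirit of Knizhnerman \cite{knizhnerman_96}. Let $\pi$ be the Chebyshev measure of the first kind rescaled to $[\tta,\ttb]$, with corresponding shifted Chebyshev polynomials $T_0, T_1, \ldots$. Expanding each orthonormal polynomial $\op{n}(\cdot\,;\mu)$ of degree $n \leq 2k-1$ in this Chebyshev basis gives $\op{n}(x;\mu) = \sum_{i=0}^n c_{n,i} T_i(x)$, and the Cauchy formula for Chebyshev coefficients yields $|c_{n,i}| \leq 2\opMax{k}(\mu;[\tta,\ttb])$ with at most $2k$ nonzero terms. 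Linearity of integration then converts the $\mu$-moment gap to a Chebyshev moment gap:
\[
|\mom{n}(\muN;\mu) - \mom{n}(\mukfp;\mu)| \leq 4k \cdot \opMax{k}(\mu;[\tta,\ttb]) \cdot \max_{0 \leq i \leq 2k-1} |\mom{i}(\muN;\pi) - \mom{i}(\mukfp;\pi)|,
\]
so it suffices to bound the Chebyshev moment gap by $O(\sigma k^2 \epslan)$.

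For the Chebyshev moment gap I follow a Knizhnerman/Paige-style three-term recurrence strategy. Set $\vec{t}_n := T_n(\Tkfp)\vec{e}_0$ and consider the discrepancy $\vec{d}_n := T_n(\vec{A})\vec{b} - \Qkfp\vec{t}_n$. Combining the Chebyshev three-term recurrence with the perturbed Lanczos factorization \cref{eqn:lanczos_factorization_fp} (which rewrites $\Qkfp\Tkfp = \vec{A}\Qkfp - \overline{\beta}_{k-1}\overline{\vec{q}}_k\vec{e}_{k-1}^\T - \Fk$) gives a perturbed recurrence for $\vec{d}_n$ whose inhomogeneity reduces to $\Fk\vec{t}_n$ when $n \leq k-2$, because $\vec{t}_n$ is supported on the first $n+1$ coordinates of $\Tkfp$ and therefore annihilates the last-column term. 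After rescaling to $[-1,1]$ the recurrence coefficients become $O(1)$ at the cost of a factor of $\sigma$, and the Chebyshev-of-the-second-kind growth bound $|U_n| \leq n+1$ yields $\|\vec{d}_n\| \leq O(\sigma k^2 \epslan)$. The product-to-sum identity $2T_iT_j = T_{i+j} + T_{|i-j|}$ then lets me split any $n \leq 2k-2$ as $n = i+j$ with $i,j \leq k-1$ and reduces the Chebyshev moment comparison to bounding $\langle T_i(\vec{A})\vec{b}, T_j(\vec{A})\vec{b}\rangle - \vec{t}_i^\T\vec{t}_j$. Substituting $T_i(\vec{A})\vec{b} = \Qkfp\vec{t}_i + \vec{d}_i$, the $\vec{d}$-terms are already controlled; the remaining piece is the loss-of-orthogonality contribution $\vec{t}_i^\T(\Rk + \Rk^\T + \Dk - \vec{I})\vec{t}_j$. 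The crucial observation is that $\Rk$ is strictly upper triangular, so $\Rk\vec{e}_0 = 0$ and hence $\Rk\vec{t}_j = [\Rk, T_j(\Tkfp)]\vec{e}_0$; the commutator satisfies a perturbed Chebyshev recurrence derived from \cref{eqn:lanczos_R_factorization_fp} whose inhomogeneity reduces to $\Hk\vec{t}_j$ by the same index-support vanishing, giving $\|\Rk\vec{t}_j\| \leq O(\sigma k^2 \epslan)$.

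The two main obstacles I anticipate are (i) maintaining polynomial rather than exponential dependence on $k$ throughout these recurrences, which rests on the hypothesis $\sigma C k^2 \epslan < 1$ together with the assumption that $\operatorname{supp}(\muN)$ (and hence $\Lambda(\Tkfp)$ after absorbing $\eta_k \leq \|\vec{A}\|\epslan$) lies in only a slight enlargement of $[\tta,\ttb]$, so that $|T_i|$ remains bounded by an absolute constant on the relevant spectrum; and (ii) the boundary index $n = 2k-1$, for which the product-to-sum splitting forces $j = k$ and the $\overline{\beta}_{k-1}\overline{\vec{q}}_k$ contribution at the final recurrence step must be handled directly rather than killed by the support argument. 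The extra factor of $k$ in the final bound (giving $k^3$ rather than the $k^2$ from the Chebyshev moment gap) comes precisely from summing over the $2k$ Chebyshev expansion coefficients in the reduction from $\mu$-moments to $\pi$-moments.
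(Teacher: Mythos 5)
Your proposal follows the same architecture as the paper's proof: part (b) and the positivity conclusion are the routine consequences of the construction \cref{eqn:new_measure} together with \cref{eqn:h_bd}, and part (a) is reduced, via a Chebyshev expansion of $\op{n}(\cdot\,;\mu)$ with the coefficient bound $|c_{n,i}|\leq 2\opMax{k}(\mu;[\tta,\ttb])$, to a Knizhnerman-style estimate on the modified Chebyshev moments. Your account of that Chebyshev estimate is also the paper's: set $\vd{n}=T_n(\vec A)\vec b-\Qkfp\vtol{n}$, derive a perturbed Chebyshev recurrence from \cref{eqn:lanczos_factorization_fp} whose inhomogeneity kills the $\overline{\beta}_{k-1}\overline{\vec q}_k$ term via the banded support of $\vtol{n}$, bound $\|\vd{n}\|$ using the associated-polynomial formula (\Cref{thm:assoc_p}) and the growth of $\pU_n$, and control the loss-of-orthogonality term $\Rk\vtol{n}$ by the parallel perturbed recurrence derived from \cref{eqn:lanczos_R_factorization_fp} (your commutator formulation $\Rk\vtol{j}=[\Rk,T_j(\Tkfp)]\vec e_0$ is the same object since $\Rk\vec e_0=0$). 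The product-to-sum reduction and the final rescaling by an affine map to $[-1,1]$ (with the $\sigma$ factor) also match.

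The one substantive point you flag but do not resolve is the boundary index: your recurrence and splitting naturally deliver Chebyshev moment control only up to degree $2k-2$, while the theorem needs modified moments with respect to $\mu$ up to degree $2k-1$, and you correctly observe this forces $j=k$ in the product-to-sum step. The paper sidesteps this entirely: it proves its Chebyshev moment bound (\cref{thm:cheb_moments_bd}) for degrees $\leq 2k-2$ only, and then in the proof of \cref{thm:moments} invokes that statement \emph{with parameter $k+1$ in place of $k$}, so that degrees up to $2(k+1)-2=2k$ are covered. This works because the precision and support hypotheses are monotone in the right direction (one simply needs $\epslan<1/(5(k+1)^2)$ and $\eta<1/(4(k+1)^2)$, which follow from $\epslan<1/(20k^2)$ and $\eta<1/(16k^2)$), at the cost of the final constants being $(k+1)^2$ rather than $k^2$. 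So no separate boundary argument is needed. Aside from that one unresolved step — and a minor indexing slip in your description of when the $\Fk\vtol{\cdot}$ inhomogeneity appears — your blind proposal reconstructs the paper's proof.
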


The majority of the remainder of this section is devoted to proving \cref{thm:backwards}.

As noted in \cref{rem:backwards_finite}, if $\mu = \muN$, $\mu_*$ is the VESD of a nearby problem $(\vec{A},\vec{b}_*)$, which is the \emph{same dimension} as the original problem $(\vec{A},\vec{b})$.
In this case $\momDel{k}(\muN,\muN;\muN) = 0$ and we have the following corollary:
\begin{corollary}
Under the assumptions of \cref{thm:backwards} (with $\mu=\mu_N)$, and assuming $\epslan$ is sufficiently small, there exists a nearby vector $\vec{b}^*$ satisfying
    \[
\|\vec{b} - \vec{b}_*\| \leq 2\sigma Dk^4 \opMax{k}(\mu_N;[\tta,\ttb])^2 \epslan,
    \]
    such that the Lanczos algorithm run on $(\vec{A},\vec{b}^*)$ for $k$ iterations in exact arithmetic produces $\mukfp$.
\end{corollary}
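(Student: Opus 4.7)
The plan is to specialize Theorem~\ref{thm:backwards} to the choice $\mu = \muN$ and then invoke Remark~\ref{rem:backwards_finite} to produce $\vec{b}_*$. The modified moments of $\muN$ against its own orthonormal polynomials satisfy $\mom{n}(\muN;\muN) = \bOne(n=0)$, so $\momDel{k}(\muN,\muN;\muN) = 0$, and part~(b) of the theorem reduces to
\[
\|\poly{h}\|_{[\tta,\ttb]} \leq 2k\opMax{k}(\muN;[\tta,\ttb])\, \momDel{k}(\mu_*,\muN;\muN).
\]
Substituting the bound from part~(a) immediately gives $\|\poly{h}\|_{[\tta,\ttb]} \leq 2\sigma D k^{4} \opMax{k}(\muN;[\tta,\ttb])^{2} \epslan$, which is exactly the quantity appearing in the statement.

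Next I need to control $\poly{h}$ on $\operatorname{supp}(\muN)$, which is contained in the slightly larger interval $[\tta-\delta,\ttb+\delta]$ with $\delta = (\ttb-\tta)/(32k^{2})$, rather than only on $[\tta,\ttb]$. Since $\poly{h}$ is a polynomial of degree at most $2k-1$, Chebyshev's extremal property gives
\[
\|\poly{h}\|_{[\tta-\delta,\ttb+\delta]} \leq \pT_{2k-1}\!\bigl(1 + 2\delta/(\ttb-\tta)\bigr) \|\poly{h}\|_{[\tta,\ttb]},
\]
and a short computation shows that the prefactor equals $\cosh\bigl((2k-1)\operatorname{arccosh}(1 + 1/(16k^{2}))\bigr) = O(1)$, so this costs only an absolute constant that can be absorbed into $D$. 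Provided $\epslan$ is small enough that the resulting bound is at most $1$, Remark~\ref{rem:backwards_finite} shows that $\mu_*$ is the VESD of $(\vec{A},\vec{b}_*)$ with $\vec{b}_* = (\vec{I} + \poly{h}(\vec{A}))^{1/2}\vec{b}$. Expanding $\vec{b}$ in the eigenbasis of $\vec{A}$, only eigenvalues in $\operatorname{supp}(\muN)$ contribute to $\|\vec{b} - \vec{b}_*\|$ (components along eigenvectors orthogonal to $\vec{b}$ drop out), so using $|1 - \sqrt{1+h}| \leq |h|$ for $|h| \leq 1$ yields $\|\vec{b} - \vec{b}_*\| \leq \|\poly{h}\|_{\operatorname{supp}(\muN)}$, bounded above by the stated quantity.

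Finally, because $\mu_*$ was built in \cref{eqn:new_measure} to have modified moments agreeing with $\mukfp$ through degree $2k-1$, and the first $k$ recurrence coefficients of a measure are determined by its first $2k$ moments, the Jacobi matrix of $\mu_*$ agrees with $\Tkfp$ in its first $k$ rows and columns. Running the Lanczos algorithm on $(\vec{A}, \vec{b}_*)$ in exact arithmetic is precisely the Stieltjes procedure applied to $\mu_*$, so it produces $\Tkfp$, equivalently the Gauss quadrature $\mukfp$. The only genuinely non-mechanical step is the polynomial extrapolation from $[\tta,\ttb]$ to $[\tta-\delta,\ttb+\delta]$; everything else follows by combining the two parts of \cref{thm:backwards} with the construction of $\mu_*$ and \cref{rem:backwards_finite}.
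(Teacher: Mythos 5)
Your proposal follows the same overall route as the paper: specialize $\mu = \mu_N$ so that $\momDel{k}(\mu_N,\mu_N;\mu_N) = 0$, chain parts~(a) and~(b) of \cref{thm:backwards} to bound $\|\poly{h}\|_{[\tta,\ttb]}$, and then invoke \cref{rem:backwards_finite} for the existence and proximity of $\vec{b}_*$. The core calculation and the final appeal to moment matching are both correct.

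However, the middle of your argument contains an unnecessary (and slightly misleading) step. You worry that $\operatorname{supp}(\muN)$ might only be contained in the enlarged interval $[\tta-\delta,\ttb+\delta]$, and you introduce a Chebyshev extrapolation to control $\poly{h}$ there. But \cref{thm:backwards} requires the reference measure $\mu$ to have support contained in $[\tta,\ttb]$, so when you set $\mu = \muN$ you have automatically imposed $\operatorname{supp}(\muN) \subseteq [\tta,\ttb]$; the looser displayed condition on $\operatorname{supp}(\muN)$ is simply implied and plays no additional role in this specialization. Consequently $\|\vec{b}-\vec{b}_*\| \leq \|\poly{h}\|_{\operatorname{supp}(\muN)} \leq \|\poly{h}\|_{[\tta,\ttb]}$ with no extrapolation needed, and the bound $2\sigma D k^{4}\opMax{k}(\muN;[\tta,\ttb])^{2}\epslan$ comes out exactly, with the same constant $D$ as in the theorem. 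Your extrapolation would instead multiply the bound by an extra $O(1)$ factor, which you claim can be ``absorbed into $D$'' --- but $D$ is already fixed by the theorem, so that absorbed-constant claim does not actually give the stated inequality. The fix is simply to delete the extrapolation step; everything else you wrote is sound.
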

This implies backwards stability in the classical numerical linear algebra sense if $\opMax{k}(\muN;[\tta,\ttb])$ has polynomial growth in $k$.
If $\muN$ is sufficiently uniform relative to 
$k$ then the orthogonal polynomials through degree $k$ are easily shown to have polynomial growth using standard techniques; see for instance \cite[Lemma 6]{fischer_96}.
\begin{lemma}\label[lemma]{thm:op_bound}
Suppose, $\operatorname{supp}(\mu)\subseteq [\tta,\ttb]$ and, for some $K>0$ and $k\geq 1$, 
\begin{equation}
\label{eqn:regularity}
    \mu([x,y]) \geq K
    ,\qquad
    \forall x,y \in [\tta,\ttb]:
    |x-y| \geq (\ttb-\tta)/(16k^2).
\end{equation}
Then, 
\begin{equation*}
    \opMax{k}(\mu;[\tta,\ttb])
    \leq \frac{2}{\sqrt{K}}.
\end{equation*}
\end{lemma}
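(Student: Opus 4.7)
The plan is to combine the $L^2(\mu)$-normalization of $\op{n}$ with Markov's inequality on derivatives of polynomials. If I can locate a subinterval $I \subseteq [\tta,\ttb]$ of length at least $(\ttb-\tta)/(16k^2)$ on which $|\op{n}|\geq M/2$, where $M := \|\op{n}\|_{[\tta,\ttb]}$, then \cref{eqn:regularity} gives $\mu(I)\geq K$, and the normalization
\[
1 = \int \op{n}(x;\mu)^2\, \mu(\d x) \geq \int_I \op{n}(x;\mu)^2\, \mu(\d x) \geq (M/2)^2\, K
\]
immediately yields $M \leq 2/\sqrt{K}$, as required.

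To locate such an $I$, I would pick $x_0 \in [\tta,\ttb]$ with $|\op{n}(x_0)| = M$ and apply the Markov brothers' inequality to $\op{n}$, a polynomial of degree $n \leq 2k-1$, on the interval $[\tta,\ttb]$: this gives
\[
\|\op{n}'\|_{[\tta,\ttb]} \leq \frac{2n^2}{\ttb-\tta}\, M.
\]
A first-order Taylor bound (equivalently, the mean value theorem) then shows $|\op{n}(x)| \geq M/2$ whenever $|x-x_0| \leq (\ttb-\tta)/(4n^2)$. Since $n \leq 2k-1$ implies $4n^2 \leq 16k^2$, this forces $|\op{n}| \geq M/2$ on
\[
I := \bigl[x_0 - (\ttb-\tta)/(16k^2),\, x_0 + (\ttb-\tta)/(16k^2)\bigr] \cap [\tta,\ttb].
\]

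Finally, I would check that $|I| \geq (\ttb-\tta)/(16k^2)$ by a short case analysis: the un-intersected interval has length $(\ttb-\tta)/(8k^2)$ and is centered at a point of $[\tta,\ttb]$, so the worst case (when $x_0$ coincides with $\tta$ or $\ttb$) still leaves a half-interval of length exactly $(\ttb-\tta)/(16k^2)$. With $|I|$ controlled, \cref{eqn:regularity} applies to $I$ and yields $\mu(I)\geq K$, completing the argument. There is no real obstacle; the proof rests on the classical Markov brothers' inequality and the remainder is bookkeeping. The only mild subtlety is the endpoint case in the final step, which is why the hypothesis is phrased in terms of intervals of length $(\ttb-\tta)/(16k^2)$ rather than $(\ttb-\tta)/(8k^2)$.
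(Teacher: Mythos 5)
Your proof is correct and follows essentially the same approach as the paper's: use the Markov brothers' inequality to show $|\op{n}|$ stays above half its maximum on an interval of length at least $(\ttb-\tta)/(16k^2)$ containing the maximizer, then combine the regularity hypothesis with the $L^2(\mu)$-normalization. The only cosmetic difference is that you work directly with radius $(\ttb-\tta)/(16k^2)$ whereas the paper uses the $n$-dependent radius $(\ttb-\tta)/(4n^2)$ and then bounds it below; the argument and the constant are identical.
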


In some situations, the condition \cref{eqn:regularity} can be verified directly for $\mu = \muN$.
However, it will typically be easier to assume the Kolmogorov--Smirnov distance $ \KS(\muN,\mu_{\infty})$ between $\muN$ and some sufficiently regular measure $\mu_{\infty}$ is small.
\begin{assumption}[regularity of $\muN$]\label[assumption]{asm:nearby_regular}
Suppose $\mu_{\infty}$ is a measure with  support $[\tta,\ttb]$ such that, for some $L, \gamma > 0$,
\begin{equation}
    \label{eqn:asm_regular}
    \mu_{\infty}([x,y]) \geq L|x-y|^\gamma
    ,\qquad
    \forall x,y\in[\tta,\ttb],
\end{equation}
that for some $k\geq 1$,
\begin{equation*}
    \operatorname{supp}(\muN) \subseteq [\tta-(\ttb-\tta)/(32k^2),\ttb+(\ttb-\tta)/(32k^2)].
\end{equation*}
and that for some $\alpha>0$,
\begin{equation}
    \label{eqn:KS_assum}
    \KS(\muN,\mu_{\infty}) \leq N^{-\alpha}.
\end{equation}
\end{assumption}
In \cref{sec:random_matrix} we will discuss several common random matrix ensembles for which \cref{asm:nearby_regular} is satisfied in a probabilistic sense.

When \cref{asm:nearby_regular} is satisfied, the following result gives us a bound for $\opMax{k}(\muN;[\tta,\ttb])$.
\begin{corollary}\label[corollary]{thm:regularity_bounds}
Given \cref{asm:nearby_regular}, suppose
\begin{equation*}
    k \leq \sqrt{\frac{\ttb-\tta}{32}} \left( \frac{L N^{\alpha} }{3} \right)^{1/(2\gamma)}.
\end{equation*}
Then, for $[\tta',\ttb']:=[\tta-(\ttb-\tta)/(32k^2),\ttb+(\ttb-\tta)/(32k^2)]$,
\begin{equation*}    
\opMax{k}(\muN;[\tta',\ttb'])
    \leq  \frac{4}{\sqrt{L}} \left(\frac{32}{\ttb-\tta}\right)^{\gamma/2} k^\gamma.
\end{equation*}
\end{corollary}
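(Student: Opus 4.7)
The plan is to apply \cref{thm:op_bound} directly to $\muN$ on the widened interval $[\tta',\ttb']$. By \cref{asm:nearby_regular}, $\operatorname{supp}(\muN) \subseteq [\tta',\ttb']$, so the only hypothesis left to verify is the regularity estimate \cref{eqn:regularity} on $[\tta',\ttb']$ with some $K$ whose reciprocal square root matches the claimed bound (up to the constant $2\sqrt{3} \leq 4$). Everything else is algebraic clean-up.

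The core step is the pointwise estimate $\muN([x,y]) \geq K$ for $x,y\in[\tta',\ttb']$ with $|x-y| \geq (\ttb'-\tta')/(16k^2)$. I would first pass to the intersection $[x^*,y^*] := [\max(x,\tta),\min(y,\ttb)]$, which only decreases the measure but places the interval inside the support of $\mu_\infty$. Writing $\delta := (\ttb-\tta)/(32k^2)$, so that $[\tta',\ttb'] = [\tta-\delta,\ttb+\delta]$ and $(\ttb'-\tta')/(16k^2) \geq 2\delta$, a short case analysis on whether $x$ and $y$ lie inside or outside $[\tta,\ttb]$ shows $y^* - x^* \geq (\ttb-\tta)/(32k^2)$ in every case (and, in particular, $[x^*,y^*]$ is nonempty: the cases $x,y<\tta$ and $x,y>\ttb$ are excluded because they would force $|x-y| < 2\delta$). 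Combining \cref{eqn:asm_regular} with the standard bound $|\muN(I) - \mu_\infty(I)| \leq 2\KS(\muN,\mu_\infty) \leq 2N^{-\alpha}$ valid for any interval $I$, this yields
\[
\muN([x,y]) \,\geq\, \mu_\infty([x^*,y^*]) - 2N^{-\alpha} \,\geq\, L\bigl((\ttb-\tta)/(32k^2)\bigr)^\gamma - 2N^{-\alpha}.
\]

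To finish, the hypothesis $k \leq \sqrt{(\ttb-\tta)/32}\,(LN^\alpha/3)^{1/(2\gamma)}$ rearranges exactly to $N^{-\alpha} \leq (L/3)\bigl((\ttb-\tta)/(32k^2)\bigr)^\gamma$, so the $-2N^{-\alpha}$ consumes at most $2/3$ of the main term and I may take $K := (L/3)\bigl((\ttb-\tta)/(32k^2)\bigr)^\gamma$. Plugging into \cref{thm:op_bound},
\[
\opMax{k}(\muN;[\tta',\ttb']) \,\leq\, 2/\sqrt{K} \,=\, 2\sqrt{3/L}\,\bigl(32/(\ttb-\tta)\bigr)^{\gamma/2} k^\gamma \,\leq\, (4/\sqrt{L})\bigl(32/(\ttb-\tta)\bigr)^{\gamma/2} k^\gamma,
\]
using $2\sqrt{3} < 4$. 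The only genuine hurdle is the bookkeeping in the case analysis for $y^*-x^*$; it relies on the factor-of-two gap between the $16k^2$ appearing in \cref{thm:op_bound} and the $32k^2$ appearing in \cref{asm:nearby_regular} being just enough to absorb both $\delta$-sized overhangs.
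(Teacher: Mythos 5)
Your proof is correct and follows essentially the same route as the paper: pass from $\mu_N$ to $\mu_\infty$ via the Kolmogorov--Smirnov bound, use \cref{eqn:asm_regular} on the shrunken interval $[x,y]\cap[\tta,\ttb]$, and feed the resulting uniform lower bound into \cref{thm:op_bound}. Two points worth noting. First, your explicit case analysis for $y^*-x^*\geq\delta$ supplies a step the paper merely asserts (``the length of $[x,y]\cap[\tta,\ttb]$ is at least $(\ttb-\tta)/(32k^2)$''), and your observation that the opposite-side overhang cases are ruled out because they would force $|x-y|<2\delta$ is exactly the right justification. Second, your choice $K=\tfrac{L}{3}\bigl((\ttb-\tta)/(32k^2)\bigr)^\gamma$ is the one that makes the arithmetic close cleanly: the paper instead says it applies \cref{thm:op_bound} with $K=N^{-\alpha}$ and then writes $\frac{2}{\sqrt{N^{-\alpha}}}\leq 2\sqrt{3/L}\,(32/(\ttb-\tta))^{\gamma/2}k^\gamma$, but the hypothesis on $k$ gives $N^{\alpha/2}\geq\sqrt{3/L}\,(32k^2/(\ttb-\tta))^{\gamma/2}$, so that displayed chain goes the wrong way; the intended (and correct) argument is precisely yours, namely that $L\bigl((\ttb-\tta)/(32k^2)\bigr)^\gamma-2N^{-\alpha}\geq\tfrac13 L\bigl((\ttb-\tta)/(32k^2)\bigr)^\gamma$ and $2\sqrt 3<4$.
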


\Cref{fig:GOE_kN_growth} shows the growth of the orthogonal polynomials $p_n(\cdot\,;\muN)$ with $n$ corresponding to the same random matrix model used in other figures.  
As expected, as $N$ increases, the degree $n$ for which the orthogonal polynomials of $\muN$ grow like those of $\mu_\infty$ increases.

\begin{figure}[hbt]
    \centering
    \includegraphics[width=\textwidth]{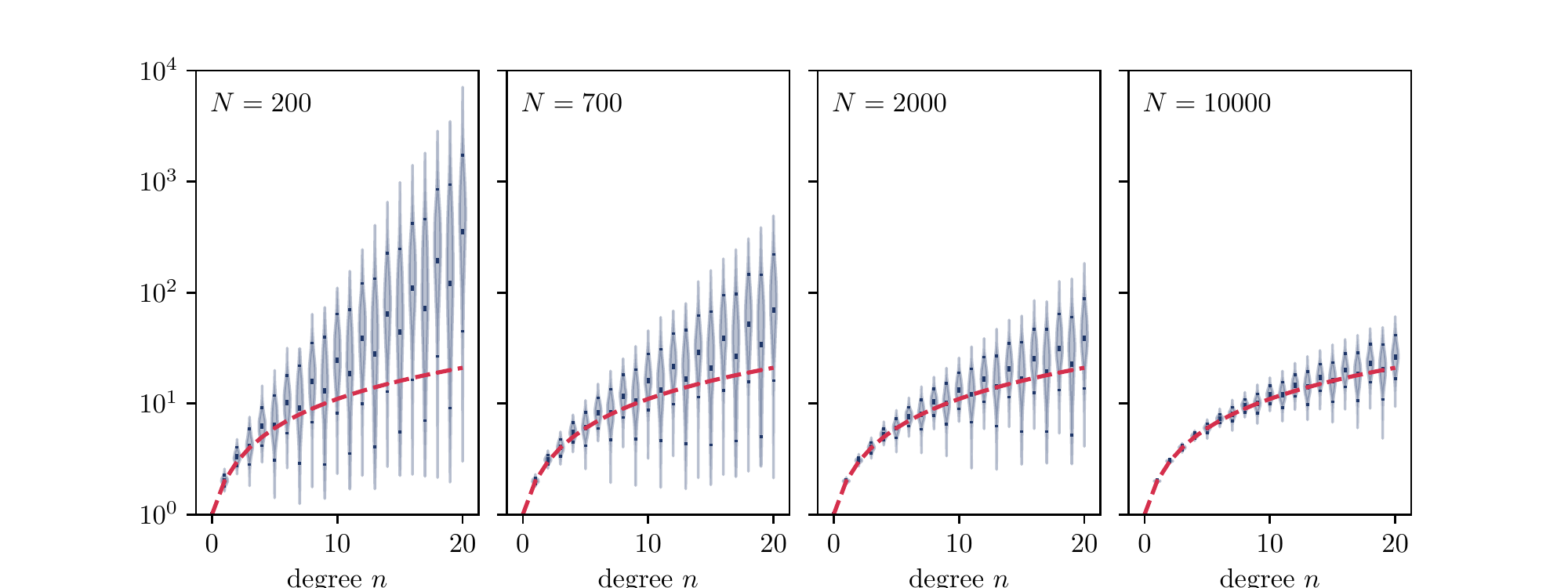}
    \caption{
    Maximum value of orthogonal polynomial $p_n(\cdot\,;\muN)$ over $[-1,1]$ and the maximum value in the $N \to \infty$ limit \textup{\textup{(\includegraphics[scale=.8]{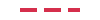})}}, where $\muN$ is drawn from the same random matrix model as in \cref{fig:motivating_experiment,fig:GOE_backwards}.
    For each $n,N$, the violin plot gives the distribution of $\|\op{n}(\cdot\,;\muN)\|_{[-1,1]}$, with the 5\%, 50\%, and 95\% quantiles marked explicitly.
    Note that for $k$ growing sufficiently slow with $N$, the maximum value of $p_n$ has polynomial growth for all $n\leq k$. 
    }
    \label{fig:GOE_kN_growth}
\end{figure}

Our forward stability analysis in \cref{sec:forwards} is based on a perturbation to a measure with a sufficiently nice density.
Assuming $\mu_\infty$ is sufficiently nice, we apply \cref{thm:backwards} with $\mu = \mu_\infty$.
This requires bounding $\opMax{k}(\mu_\infty;[\tta,\ttb])$ and $\momDel{k}(\mu_\infty,\muN;\muN)$.
\begin{corollary}\label[corollary]{thm:regularity_bounds_fwd}
Given \cref{asm:nearby_regular}, suppose that for some $c>0$
\begin{equation*}
    k \leq \bigg( \frac{\ttb-\tta}{16}\bigg)^{\gamma/ (4+2\gamma)} \bigg( \frac{c \sqrt{L} N^\alpha}{32} \bigg)^{1/(2+\gamma)}.
\end{equation*}
Then
\begin{equation*}
    \opMax{k}(\mu_\infty;[\tta,\ttb])
    \leq \frac{2}{\sqrt{L}} \left( \frac{16}{\ttb-\tta} \right)^{\gamma/2} k^{\gamma}
    ,\qquad
    \momDel{k}(\muN,\mu_\infty;\mu_\infty) \leq c.
\end{equation*}
\end{corollary}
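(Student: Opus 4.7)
My plan is to handle the two bounds separately; the first follows directly from \cref{thm:op_bound}, while the second uses a standard moment--CDF duality combined with polynomial inequalities.

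\textbf{Bound on $\opMax{k}(\mu_\infty;[\tta,\ttb])$.} I would apply \cref{thm:op_bound} with $\mu = \mu_\infty$. The regularity condition \cref{eqn:asm_regular}, evaluated at pairs with $|x-y|\geq (\ttb-\tta)/(16k^2)$, gives $\mu_\infty([x,y]) \geq L\bigl((\ttb-\tta)/(16k^2)\bigr)^{\gamma} =: K$. Substituting this $K$ into $\opMax{k}(\mu_\infty;[\tta,\ttb])\leq 2/\sqrt{K}$ and simplifying the powers of $16$ and $k^{2}$ reproduces the claimed expression.

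\textbf{Bound on $\momDel{k}(\muN,\mu_\infty;\mu_\infty)$.} The key observation is that $\mom{n}(\mu_\infty;\mu_\infty) = \bOne(n=0)$, so for $n\geq 1$ orthogonality reduces the modified moment to $\int \op{n}(x;\mu_\infty)\,\d(\muN-\mu_\infty)(x)$. Setting $[\tta',\ttb']:=[\tta-(\ttb-\tta)/(32k^2),\ttb+(\ttb-\tta)/(32k^2)]$, the CDFs of $\muN$ and $\mu_\infty$ agree (at $0$ below and $1$ above) outside this interval, so integration by parts yields
\[
    |\mom{n}(\muN;\mu_\infty)| \leq \KS(\muN,\mu_\infty)\cdot(\ttb'-\tta')\cdot \|\op{n}'(\cdot;\mu_\infty)\|_{[\tta',\ttb']}.
\]
I would then invoke Markov's polynomial inequality $\|p'\|_{[a,b]}\leq \frac{2n^2}{b-a}\|p\|_{[a,b]}$ to turn the derivative norm into $2n^2\|\op{n}\|_{[\tta',\ttb']}$, and use the Chebyshev extremal property $|p(x)| \leq \|p\|_{[\tta,\ttb]}\cdot|T_n(u(x))|$ outside $[\tta,\ttb]$ to transfer back from $[\tta',\ttb']$ to $[\tta,\ttb]$. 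At the endpoint $\ttb'$, $u = 1 + 1/(16k^2)$, and for $n\leq 2k-1$ one has $n\,\operatorname{arccosh}(u) \leq 1/\sqrt{2}$, so the extrapolation costs at most a factor $\cosh(1/\sqrt{2})<2$.

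Plugging in $(2k-1)^2 \leq 4k^2$, the first-part bound on $\opMax{k}(\mu_\infty;[\tta,\ttb])$, and $\KS(\muN,\mu_\infty)\leq N^{-\alpha}$ produces
\[
    \momDel{k}(\muN,\mu_\infty;\mu_\infty) \leq \frac{32}{\sqrt{L}}\left(\frac{16}{\ttb-\tta}\right)^{\gamma/2} k^{2+\gamma} N^{-\alpha},
\]
and requiring this expression to be at most $c$, then taking the $(2+\gamma)$-th root, yields exactly the stated upper bound on $k$. The main delicate point is the Chebyshev-extrapolation step: one must check that, even with $n$ as large as $2k-1$ and the interval enlarged on the scale $1/k^2$, the extension factor stays uniformly bounded. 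Fortunately the products $n^2\cdot 1/k^2$ and $n\,\operatorname{arccosh}(1+1/(16k^2)) \approx n/(\sqrt{8}\,k)$ are both of order $O(1)$ but small enough to keep the extrapolation constant below $2$.
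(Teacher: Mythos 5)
Your proof is correct and mirrors the paper's own argument closely: apply \cref{thm:op_bound} with $K = L\bigl((\ttb-\tta)/(16k^2)\bigr)^{\gamma}$ for the first bound, and for the second, integrate by parts, apply the Markov brothers' inequality, bound the sup norm on the enlarged interval $[\tta',\ttb']$ by twice the sup norm on $[\tta,\ttb]$, and solve the resulting inequality for $k$. The only cosmetic difference is in the interval-extension step: rather than citing \cref{thm:poly_bd} (which the paper does, noting $\eta = 1/(16k^2) \leq 1/(2n^2)$ for $n\leq 2k-1$), you re-derive the same extrapolation constant directly from $T_n(\cosh\theta) = \cosh(n\theta)$, getting $\cosh(1/\sqrt{2}) < 2$; since \cref{thm:poly_bd} is itself proved by a Bernstein-ellipse/Chebyshev argument, this is the same mechanism unpacked inline rather than a different route.
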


The proofs of \cref{thm:op_bound,thm:regularity_bounds,thm:regularity_bounds_fwd} are given in \cref{sec:regular_proofs}.
Stronger bounds can be obtained in many situations. 
The stated bounds are simply meant to give a simple sufficient condition for the orthogonal polynomials to have polynomial growth with respect to $k$.

\subsection{Bounding the modified Chebyshev moments}
\label{sec:moments}

We will make frequent use of the well-known Chebyshev polynomials of the first and second kinds.
These families of polynomials are respectively defined by the recurrences
\begin{align*}
\pT_0(x) &= 1, & \pT_1(x) &= x, & \pT_{n}(x) &= 2x \pT_{n-1}(x) - \pT_{n-2}(x), & n&\geq 2, \\
\pU_0(x) &= 1, & \pU_1(x) &= 2x, & \pU_{n}(x) &= 2x \pU_{n-1}(x) - \pU_{n-2}(x), & n&\geq 2,
\end{align*}
and are respectively orthogonal with respect to the measures $\mu_{\pT}$ and $\mu_U$, each supported on $[-1,1]$, defined by
\begin{equation*}
\mu_{\pT}(\d{x}) :=  \frac{1}{\pi}\frac{1}{\sqrt{1-x^2}} \, \d{x}
,\qquad
\mu_{\pU}(\d{x}) :=  \frac{2}{\pi}\sqrt{1-x^2} \, \d{x}.
\end{equation*}
The Chebyshev polynomials of the first kind also satisfy the identities
\begin{equation}
\label{eqn:T_identities}
    \pT_{2n}(x) = 2 \pT_{n}(x)^2 - 1 
    ,\qquad
    \pT_{2n+1}(x) = 2 \pT_{n}(x) \pT_{n+1}(x) - x
    ,\qquad 
    n\geq0. 
\end{equation}

To prove \cref{thm:backwards}, it essentially suffices to show that the modified moments of the finite precision and exact arithmetic computations are near.
We begin by providing a slightly modified version of \cite{knizhnerman_96} for $\mu = \mu_{\pT}$ to allow for eigenvalues of $\vec{A}$ (and therefore $\Tkfp$) which may be just outside of $[-1,1]$.

It is well known that $\|\pT_{n}\|_{[-1,1]} \leq 1$ and $\|\pU_{n}\|_{[-1,1]}\leq n+1$.
Similar bounds hold on a slight extension of $[-1,1]$.
\begin{lemma}\label[lemma]{thm:poly_bd}
For any polynomial $p$ of degree $n$, with $\eta := 1/(2n^2)$,
    \begin{equation*}
        \|p\|_{[-1-\eta,1+\eta]} \leq 2 \| p\|_{[-1,1]}.
    \end{equation*}
\end{lemma}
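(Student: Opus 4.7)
\textbf{Proof plan for \cref{thm:poly_bd}.} The plan is to reduce the bound on an arbitrary polynomial to a computation with the Chebyshev polynomial of the first kind, exploiting its well-known extremal property outside the interval $[-1,1]$. Specifically, I would invoke the fact that among all polynomials of degree at most $n$ with $\|p\|_{[-1,1]} \leq 1$, the Chebyshev polynomial $\pT_n$ achieves the largest value at every point $x$ with $|x|\geq 1$. Consequently, for any such $p$ and any $x \in [-1-\eta, 1+\eta]$, one has $|p(x)| \leq \|p\|_{[-1,1]} \cdot |\pT_n(x)|$, so it suffices to bound $\|\pT_n\|_{[-1-\eta,1+\eta]}$. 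By the parity identity $\pT_n(-x) = (-1)^n \pT_n(x)$ and monotonicity on $[1,\infty)$, this reduces further to bounding $\pT_n(1+\eta)$.

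Next, I would use the standard parameterization $\pT_n(\cosh \theta) = \cosh(n\theta)$, valid for $x = \cosh\theta \geq 1$. Setting $\cosh\theta = 1 + \eta = 1 + 1/(2n^2)$, and using the elementary inequality $\cosh\theta \geq 1 + \theta^2/2$, I obtain $\theta^2/2 \leq 1/(2n^2)$, hence $\theta \leq 1/n$. Therefore $n\theta \leq 1$, and
\[
\pT_n(1+\eta) = \cosh(n\theta) \leq \cosh(1) < 2.
\]
Combining this with the extremal inequality yields $\|p\|_{[-1-\eta,1+\eta]} \leq 2\|p\|_{[-1,1]}$.

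The main conceptual step is invoking the Chebyshev extremal property correctly; everything else is routine manipulation with elementary functions. If one prefers not to appeal to the extremal property directly, an alternative route is to use Markov's brothers inequality, $\|p'\|_{[-1,1]} \leq n^2 \|p\|_{[-1,1]}$, together with an analytic continuation / mean value argument: for $x \in [1, 1+\eta]$ write $p(x) = p(1) + \int_1^x p'(t)\,dt$ and bootstrap, but this only works cleanly once one controls $p'$ slightly outside $[-1,1]$ (which requires essentially the same Chebyshev argument). The cleanest route is the one above, and the only small subtlety is verifying that $\cosh(1) < 2$, which is immediate numerically.
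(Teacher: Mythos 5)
Your proof is correct, and it shares the paper's key idea — the Chebyshev extremal property reducing the problem to bounding $\pT_n$ on the extended interval — but your method for bounding $\pT_n(1+\eta)$ is genuinely simpler and sharper. You parameterize the real ray $[1,\infty)$ by $x = \cosh\theta$, use $\cosh\theta \geq 1 + \theta^2/2$ to get $n\theta \leq 1$, and conclude $\pT_n(1+\eta) \leq \cosh(1) \approx 1.543 < 2$. The paper instead works with the Joukowski map $J(w) = (w+w^{-1})/2$ on the complex plane, proves $|\pT_n(z)| \leq 2$ on the entire Bernstein ellipse with parameter $\rho(n) = 1 + \ln(2+\sqrt3)/n$ by a monotonicity argument in $n$, and then separately verifies that the small interval is contained in that ellipse, handling $n=0,1$ by hand. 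What the paper's approach buys is a uniform estimate over a full Bernstein ellipse (useful if one later needed complex arguments), and a constant that is tight as $n\to\infty$ for the ellipse; but for the lemma as stated, which only concerns a real interval, your route is the cleaner one and even yields the better constant $\cosh(1)$ in place of $2$. The only small addition I'd make is to note explicitly that the statement implicitly assumes $n\geq 1$ (since $\eta = 1/(2n^2)$ is undefined for $n=0$), which your argument covers without a separate case.
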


This implies a bound for matrix Chebyshev polynomials of $\vec{A}$ and $\Tkfp$.
\begin{lemma}
\label[lemma]{thm:cheb_mat_bd}
Suppose that Lanczos is run on $(\vec{A},\vec{b})$ for $k\geq 1$ iterations with precision $\epslan<1/(5 k^{2}) $ and that 
$\|\vec{A}\| \leq 1+1/(4k^2)$.
Then, for all $n\leq k$, 
\begin{equation*}
    \| \pT_{n}(\vec{A}) \|, \| \pT_{n}(\Tkfp) \|
    \leq 2
    ,\qquad
    \| \pU_{n}(\vec{A}) \|, \| \pU_{n}(\Tkfp) \|
    \leq 2(k+1).
\end{equation*}
\end{lemma}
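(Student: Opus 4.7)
The plan is to combine the spectral mapping theorem for symmetric matrices with the polynomial growth bound \cref{thm:poly_bd}. Since both $\vec{A}$ and $\Tkfp$ are real symmetric, for any polynomial $p$ we have $\|p(\vec{A})\| = \max_{\lambda \in \Lambda(\vec{A})} |p(\lambda)| \leq \|p\|_{\Lambda(\vec{A})}$, and similarly for $\Tkfp$. So the task reduces to locating the spectra of $\vec{A}$ and $\Tkfp$ inside a slight extension of $[-1,1]$ that is compatible with the hypothesis of \cref{thm:poly_bd}.

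First, under the hypothesis $\|\vec{A}\| \leq 1 + 1/(4k^2)$, the spectrum of $\vec A$ lies in $[-1-1/(4k^2), 1+1/(4k^2)]$. Next I would use \cref{eqn:eta} together with the precision bound $\eta_k \leq \|\vec{A}\|\epslan$ from \cref{def:lanczos_precision} to control $\Lambda(\Tkfp)$. Since $k \geq 1$ gives $\|\vec{A}\| \leq 5/4$ and $\epslan < 1/(5k^2)$, we get
\[
\eta_k \;\leq\; \tfrac{5}{4}\cdot\tfrac{1}{5k^2}\;=\;\tfrac{1}{4k^2},
\]
so $\Lambda(\Tkfp) \subseteq [-1 - 1/(2k^2),\, 1 + 1/(2k^2)]$. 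For any $n \leq k$, we have $1/(2k^2) \leq 1/(2n^2)$, so both spectra lie in the interval $[-1-1/(2n^2), 1+1/(2n^2)]$ appearing in \cref{thm:poly_bd}.

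Applying \cref{thm:poly_bd} to $\pT_n$ and $\pU_n$ (each of degree $n$) and using the standard bounds $\|\pT_n\|_{[-1,1]} \leq 1$ and $\|\pU_n\|_{[-1,1]} \leq n+1$ yields
\[
\|\pT_n\|_{[-1-1/(2n^2), 1+1/(2n^2)]} \leq 2,\qquad \|\pU_n\|_{[-1-1/(2n^2), 1+1/(2n^2)]} \leq 2(n+1) \leq 2(k+1).
\]
Combining with spectral mapping gives the four claimed bounds. There is no real obstacle here: the whole argument is a bookkeeping exercise to verify that the constants $1/(4k^2)$ in the spectrum of $\vec A$ and the additional $\eta_k$ from the finite-precision perturbation together fit inside the $1/(2n^2)$ window of \cref{thm:poly_bd}; the slightly awkward constants in the hypothesis (namely $1+1/(4k^2)$ and $1/(5k^2)$) are precisely tuned to make this fit work for all $n \leq k$.
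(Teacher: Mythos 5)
Your proof is correct and follows essentially the same route as the paper: locate $\Lambda(\vec A)$ and $\Lambda(\Tkfp)$ inside $[-1-1/(2k^2),\,1+1/(2k^2)]$ using $\|\vec A\|\leq 1+1/(4k^2)$ and the $\eta_k$ bound from \cref{def:lanczos_precision}, then invoke \cref{thm:poly_bd} together with $\|\pT_n\|_{[-1,1]}\leq 1$, $\|\pU_n\|_{[-1,1]}\leq n+1$ via the spectral mapping theorem. The only cosmetic difference is how the arithmetic is packaged: the paper writes the spectral containment as $(1+1/(4k^2))(1+\epslan)\leq 1+1/(2k^2)$, whereas you split it into $\eta_k\leq (5/4)(1/(5k^2))=1/(4k^2)$ and add, which lands in the same place.
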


\begin{proof}
    For $k\geq1$, we have that $(1+1/(4k^2))(1 + \epslan) \leq 1 + 1/(2k^2)$.
    Thus, since $\|\vec{A}\| \leq 1+1/(4k^2)$, our assumption on $\epslan$ and \cref{def:lanczos_precision} imply $\Lambda(\Tkfp) \subseteq [-1-1/(2k^2),1+1/(2k^2)]$.
    The result follows by applying \cref{thm:poly_bd} and the fact $\|\pU_n\|_{[-1,1]} \leq n+1$, since the operator norm of a matrix function of a symmetric matrix is simply the maximum value of the function's absolute value evaluated at the eigenvalues of that matrix.
\end{proof}

We will also use the following fact about perturbed Chebyshev recurrences. 
This is a special case of a more general formula involving the associated polynomials of some family of orthogonal polynomials.
\begin{lemma}\label[lemma]{thm:assoc_p}
Suppose that
\begin{equation*}
\poly{d}_0 = 0,\qquad
\poly{d}_1 = f_0, \qquad
\poly{d}_n(x) = 2 x \poly{d}_{n-1}(x) - \poly{d}_{n-2}(x) + 2 f_{n-1}, \qquad n\geq 2.
\end{equation*}
Then, introducing the notation $U_{-1}(x) = 0$,
\begin{equation*}
    \poly{d}_n(x) = \pU_{n-1}(x) f_0 + 2 \sum_{i=2}^{n} \pU_{n-i}(x) f_{i-1}
    ,\qquad n\geq0.
\end{equation*}
\end{lemma}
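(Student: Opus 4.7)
The plan is to prove the identity by induction on $n$, exploiting the Chebyshev recurrence $\pU_m(x) = 2x\pU_{m-1}(x) - \pU_{m-2}(x)$, which, together with $\pU_{-1} \equiv 0$ and $\pU_0 \equiv 1$, is valid for all $m \geq 1$.

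For the base cases, I would check $n = 0$ and $n = 1$ directly. When $n = 0$, the sum $\sum_{i=2}^{0}$ is empty and $\pU_{-1}(x) f_0 = 0$, matching $\poly{d}_0 = 0$. When $n = 1$, the sum is again empty and $\pU_0(x) f_0 = f_0 = \poly{d}_1$. A quick check at $n = 2$ gives $2x f_0 + 2 f_1$ from both the recurrence and the closed form, which builds confidence that the index ranges are correct.

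For the inductive step, assume the formula holds for $\poly{d}_{n-1}$ and $\poly{d}_{n-2}$, plug both into $\poly{d}_n = 2x\poly{d}_{n-1} - \poly{d}_{n-2} + 2 f_{n-1}$, and collect the coefficient of each $f_j$. The coefficient of $f_0$ comes out to $2x \pU_{n-2}(x) - \pU_{n-3}(x) = \pU_{n-1}(x)$ by the Chebyshev recurrence. For the ``interior'' indices $1 \leq j \leq n-3$, the coefficient of $f_j$ appears in both sums and equals $4x \pU_{n-2-j}(x) - 2 \pU_{n-3-j}(x) = 2 \pU_{n-1-j}(x)$, again by Chebyshev. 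The two remaining terms come from the boundary: the coefficient of $f_{n-2}$ comes only from $2x\poly{d}_{n-1}$ (since the sum in $\poly{d}_{n-2}$ stops at $i = n-2$, i.e.\ at $f_{n-3}$) and equals $4x \pU_0(x) = 4x$, while the coefficient of $f_{n-1}$ is $2$ from the inhomogeneous term alone. These match exactly the $i = n-1$ and $i = n$ contributions of the target sum, namely $2 \pU_1(x) f_{n-2} = 4x f_{n-2}$ and $2 \pU_0(x) f_{n-1} = 2 f_{n-1}$.

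The main ``obstacle'' is pure bookkeeping: ensuring the index ranges of the two sums align correctly and that the convention $\pU_{-1} \equiv 0$ is applied consistently (in particular at the $m = 1$ and $m = 2$ instances of the Chebyshev recurrence used for the $f_{n-3}$ and $f_{n-4}$ coefficients, respectively). Writing out the recombination for $n = 3$ and $n = 4$ explicitly before stating the general induction would be a safe way to avoid off-by-one errors.
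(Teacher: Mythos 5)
Your proof is correct and takes essentially the same approach as the paper: strong induction on $n$, substituting the inductive hypothesis into the recurrence and using $\pU_m = 2x\pU_{m-1} - \pU_{m-2}$ to recombine coefficients of each $f_j$. You are actually a bit more careful than the paper, which only sketches the inductive step and does not spell out the base cases $n=0,1$ as you do.
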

\Cref{thm:poly_bd,thm:assoc_p} are proved in \cref{sec:thm:auxiliary_proofs}.

The next several results and the accompanying proofs follow \cite{knizhnerman_96} closely.
We include them so that our analysis is self-contained and in order to compute explicit constants.
In our proofs, for notational brevity, we define and use the vectors:
\begin{equation}
\label{eqn:short_vector_def}
    \vt{n} := \pT_{n}(\vec{A})\vec{b}
    ,\qquad \vtol{n} := \pT_{n}(\Tkfp) \vec{e}_0
    ,\qquad \vd{n} := \vt{n} - \Qkfp \vtol{n}
    ,\qquad
    \vr{n} := \Rk \vtol{n}.
\end{equation}

The first technical lemma we need is a bound on how well polynomials in $\vec{A}$ applied to $\vec{b}$ are approximated by the Lanczos quantities. 
To the best of our knowledge, a similar bound first appeared in \cite{druskin_knizhnerman_91} to analyze the behavior of the well-known Lanczos method for matrix function approximation; see also \cite{musco_musco_sidford_18}.

\begin{lemma}\label[lemma]{thm:TAe0}
Suppose that Lanczos is run on $(\vec{A},\vec{b})$ for $k>1$ iterations with precision $\epslan<1/(5k^2)$ and that
$\|\vec{A}\| \leq 1+1/(4k^2)$.
Then, for all $n\leq k-1$, 
\begin{equation*}
    \|\pT_{n}(\vec{A}) \vec{b} - \Qkfp \pT_{n}(\Tkfp) \vec{e}_0 \|
    \leq 9 k^{2} \epslan.
\end{equation*}
\end{lemma}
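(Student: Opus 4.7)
The plan is to set $\vd{n} := \vt{n} - \Qkfp \vtol{n}$ as in \cref{eqn:short_vector_def}, derive a perturbed Chebyshev three-term recurrence for $\vd{n}$, and then solve it in closed form using \cref{thm:assoc_p}. Applying the Chebyshev recurrence $\pT_n = 2x\pT_{n-1} - \pT_{n-2}$ at $\vec{A}$ (acting on $\vec{b}$) and at $\Tkfp$ (acting on $\vec{e}_0$), subtracting $\Qkfp$ times the second from the first, and substituting the perturbed Lanczos identity \cref{eqn:lanczos_factorization_fp} yields
\begin{equation*}
\vd{n} = 2\vec{A}\vd{n-1} - \vd{n-2} + 2\bigl(\overline{\beta}_{k-1}\overline{\vec{q}}_k \vec{e}_{k-1}^\T + \Fk\bigr)\vtol{n-1}, \qquad n \geq 2.
\end{equation*}

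The crucial observation is that for $n \leq k-1$ the rank-one perturbation drops out \emph{exactly}. Indeed, $\vtol{n-1} = \pT_{n-1}(\Tkfp)\vec{e}_0$ is a polynomial of degree at most $k-2$ in the tridiagonal matrix $\Tkfp$ applied to $\vec{e}_0$, so its support lies in coordinates $0, \ldots, n-1 \leq k-2$, and in particular $\vec{e}_{k-1}^\T \vtol{n-1} = 0$. The recurrence reduces to $\vd{n} = 2\vec{A}\vd{n-1} - \vd{n-2} + 2\Fk \vtol{n-1}$. For the initial conditions, $\vd{0} = \vec{b} - \overline{\vec{q}}_0 = \vec{0}$, and inspecting the first column of \cref{eqn:lanczos_factorization_fp} gives $\vec{A}\overline{\vec{q}}_0 = \Qkfp \Tkfp \vec{e}_0 + \Fk\vec{e}_0$ (using $k > 1$ so $\vec{e}_{k-1}^\T \vec{e}_0 = 0$), whence $\vd{1} = \Fk \vec{e}_0 = \Fk \vtol{0}$.

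Setting $f_i := \Fk \vtol{i}$, this matches \cref{thm:assoc_p} coordinate-wise under $x \mapsto \vec{A}$, giving the closed form
\begin{equation*}
\vd{n} = \pU_{n-1}(\vec{A})\Fk\vtol{0} + 2\sum_{i=2}^{n}\pU_{n-i}(\vec{A})\Fk\vtol{i-1}.
\end{equation*}
Now \cref{thm:cheb_mat_bd} supplies $\|\pU_j(\vec{A})\| \leq 2(k+1)$ and $\|\vtol{j}\| \leq \|\pT_j(\Tkfp)\| \leq 2$ for $j \leq k$, while $\|\Fk\| \leq \|\vec{A}\|\epslan \leq (1 + 1/(4k^2))\epslan$ from \cref{def:lanczos_precision} together with the hypothesis $\|\vec{A}\| \leq 1 + 1/(4k^2)$. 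Summing yields $\|\vd{n}\| \leq 2(k+1)\|\Fk\|\bigl(1 + 4(n-1)\bigr)$, and for $n \leq k-1$ this is at most $(8k^2 + 2)\epslan \leq 9k^2\epslan$ once $k \geq 2$.

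The main obstacle is not any deep estimate but rather the bookkeeping: recognizing that the rank-one correction vanishes exactly for $n \leq k-1$ (so that only $\Fk$ appears as a source term), aligning the shifted initial condition $\vd{1} = \Fk\vtol{0}$ with the indexing of \cref{thm:assoc_p}, and tracking constants tightly enough to land at exactly $9k^2$. No new estimates beyond \cref{thm:cheb_mat_bd,thm:assoc_p} are required.
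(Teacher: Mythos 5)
Your proof is correct and follows essentially the same route as the paper: derive the perturbed Chebyshev recurrence for $\vd{n}$, observe that $\vec{e}_{k-1}^\T\vtol{n-1}=0$ kills the rank-one term for $n\leq k-1$, solve the recurrence in closed form via \cref{thm:assoc_p}, and apply the bounds from \cref{thm:cheb_mat_bd}. The only cosmetic differences are in constant tracking — you use the uniform bound $\|\pU_j(\vec{A})\|\leq 2(k+1)$ from the statement of \cref{thm:cheb_mat_bd} and keep the $i=1$ term separate with $\|\vtol{0}\|=1$, whereas the paper uses the slightly tighter $\|\pU_\ell(\vec{A})\|\leq 2k$ for $\ell\leq k-1$ and doubles the first term for convenience — but both land at $9k^2\epslan$.
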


\begin{proof}
Since $k>1$, using the notation in \cref{eqn:short_vector_def} and recalling the perturbed recurrence \cref{eqn:lanczos_factorization_fp}, we have
\begin{equation*}
\vd{0} = \vec{b} - \Qkfp \vec{e}_0 = \vec{0}
,
\qquad
\vd{1} = \vec{A} \vec{b} - \Qkfp \Tkfp \vec{e}_0
= (\beta_k\vec{q}_{k-1} \vec{e}_{k-1}^\T + \Fk)\vec{e}_0
= \vec{F}_k \vtol{0}.
\end{equation*}
For \( n=2,\ldots, k-1 \), we can use the definitions of $\vt{n}$ and $\vtol{n}$, the definition of the Chebyshev polynomials, and the perturbed recurrence \cref{eqn:lanczos_factorization_fp} to write
\begin{align*}
    \vd{n} 
    &= (2\vec{A} \vt{n-1} - \vt{n-2}) - (2\Qkfp\Tkfp \vtol{n-1} - \Qkfp\vtol{n-2})
    \\&= 2(\vec{A} \vt{n-1} - (\vec{A} \Qkfp - \beta_k\vec{q}_{k-1} \vec{e}_{k-1}^\T - \Fk) \vtol{n-1}) - (\vt{n-2} - \Qkfp\vtol{n-2})
    \\&= 2(\vec{A} \vt{n-1} - (\vec{A} \Qkfp \vtol{n-1} - \beta_k\vec{q}_{k-1} \vec{e}_{k-1}^\T \vtol{n-1} - \Fk \vtol{n-1})) - \vd{n-2}.
\end{align*}
Note that $(\Tkfp)^i$ has half bandwidth $i$, so $(\Tkfp)^i$ is zero in the bottom left entry provided $i<k-1$.
Since $T_i$ is a degree $i$ polynomial, this implies that that $\vec{e}_{k-1}^\T\vtol{i} = \vec{e}_{k-1}^\T T_i(\Tkfp)\vec{e}_0 = 0$ for any $i<k-1$. 
Since $n<k$, applying this with $i=n-1$ we find
\begin{equation*}
    \vd{n} = 2\vec{A} \vd{n-1} - \vd{n-2} + 2 \Fk \vtol{n-1}.
\end{equation*}
\Cref{thm:assoc_p} with $x\to \vec{A}$, $d_n(x) \to \vd{n}$, and $f_n \to \vec{F}_k \vtol{n}$ allows us to obtain an explicit expression
\begin{equation}
\label{eqn:dn_rec}
    \vd{n} = \pU_{n-1}(\vec{A}) \Fk \vtol{0} + 2 \sum_{i=2}^{n} \pU_{n-i}(\vec{A}) \Fk \vtol{i-1}.
\end{equation}
Since $\epslan<1/(5k^2)$ and $\|\vec{A}\|\leq 1+1/(4k^2)$, if $\ell \leq k-1$, then \cref{thm:cheb_mat_bd} gives the bounds
\begin{equation*}
    \| \pU_{\ell}(\vec{A}) \| \leq 2k
    ,\qquad 
    \| \vtol{\ell} \|
    = \| \pT_{\ell}(\Tkfp) \|
    \leq 2.
\end{equation*}
Using \cref{def:lanczos_precision} and the assumption $k>1$, 
\[
\| \Fk \| \leq \| \vec{A} \| \epslan
\leq (1 + 1/(4k^2)) \epslan \leq (17/16) \epslan
< (9/8) \epslan.
\]
Finally, we apply the triangle inequality to \cref{eqn:dn_rec}, double the first term for convenience, apply the above bounds, and use the fact $n \leq k$ to obtain the bound
\begin{equation*}
    \| \vd{n} \| \leq 2 \sum_{i=1}^{n} \| \pU_{n-i}(\vec{A}) \| \|\Fk\| \| \vtol{i-1} \|
    \leq 2 n (2 k) ((9/8) \epslan) (2)
    < 9 k^2 \epslan.
    \qedhere
\end{equation*}
\end{proof}

If $\Qkfp$ had nearly orthonormal columns, we could use \cref{eqn:T_identities} to upgrade \cref{thm:TAe0} to a bound on the modified moments produced by the Lanczos algorithm. 
However, since we do not have such a guarantee, we require a bit more work.
We begin with a lemma akin to \cite[Lemma 1]{knizhnerman_96}.

\begin{lemma}\label[lemma]{thm:RTe0}
Suppose that Lanczos is run on $(\vec{A},\vec{b})$ for $k>1$ iterations with precision $\epslan<1/(5 k^{2}) $ and that
$\|\vec{A}\| \leq 1+1/(4k^2)$.
Then, for all $n\leq k-1$, 
\begin{equation*}
        \| \Rk \pT_{n}(\Tkfp) \vec{e}_0 \| 
        \leq 9 \| \vec{A} \| k^{2} \epslan.
    \end{equation*}
\end{lemma}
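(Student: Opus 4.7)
The plan is to mirror the proof of \cref{thm:TAe0} almost line-for-line, but replacing the perturbed Lanczos recurrence \cref{eqn:lanczos_factorization_fp} with the perturbed three-term recurrence \cref{eqn:lanczos_R_factorization_fp} satisfied by $\Rk$. Using the shorthand $\vr{n} = \Rk \vtol{n} = \Rk \pT_n(\Tkfp)\vec{e}_0$, the goal is to derive a three-term recurrence for $\vr{n}$ matching the hypotheses of \cref{thm:assoc_p}, and then bound the resulting explicit formula.

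First I would handle the base cases. Since $\Rk$ is strictly upper triangular, its first column vanishes, giving $\vr{0} = \Rk \vec{e}_0 = \vec{0}$. For $\vr{1} = \Rk \Tkfp \vec{e}_0$, use \cref{eqn:lanczos_R_factorization_fp} rearranged as $\Rk \Tkfp = \Tkfp \Rk - \overline{\beta}_{k-1}\Qkfp^\T \overline{\vec{q}}_k \vec{e}_{k-1}^\T - \Hk$. Applied to $\vec{e}_0$, the first term vanishes (first column of $\Rk$ is zero), the middle term vanishes because $\vec{e}_{k-1}^\T \vec{e}_0 = 0$ (as $k>1$), leaving $\vr{1} = -\Hk \vec{e}_0$.

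Next, for $2 \leq n \leq k-1$, apply the Chebyshev recurrence for $\vtol{n}$ and then use the rearranged form of \cref{eqn:lanczos_R_factorization_fp} to push $\Rk$ past $\Tkfp$. Exactly as in the proof of \cref{thm:TAe0}, the bandwidth of $\Tkfp$ forces $\vec{e}_{k-1}^\T \vtol{n-1} = 0$ whenever $n-1 < k-1$, so the $\overline{\beta}_{k-1}\Qkfp^\T \overline{\vec{q}}_k \vec{e}_{k-1}^\T$ contribution vanishes. What remains is
\[
\vr{n} = 2\Tkfp \vr{n-1} - \vr{n-2} + 2(-\Hk \vtol{n-1}).
\]
This puts me exactly in the setting of \cref{thm:assoc_p} with $x\to \Tkfp$, $\poly{d}_n \to \vr{n}$, and $f_{n-1} \to -\Hk \vtol{n-1}$, yielding
\[
\vr{n} = -\pU_{n-1}(\Tkfp)\Hk\vtol{0} - 2\sum_{i=2}^{n} \pU_{n-i}(\Tkfp)\Hk \vtol{i-1}.
\]

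Finally, I would apply the triangle inequality (doubling the first summand for convenience, as in \cref{thm:TAe0}), and plug in the bounds $\|\pU_{n-i}(\Tkfp)\| \leq 2k$ and $\|\vtol{i-1}\| = \|\pT_{i-1}(\Tkfp)\vec{e}_0\| \leq 2$ from \cref{thm:cheb_mat_bd}, together with $\|\Hk\| \leq \|\vec{A}\|\epslan$ from \cref{def:lanczos_precision}. For $n \leq k-1$ this produces $\|\vr{n}\| \leq 2n \cdot (2k)(\|\vec{A}\|\epslan)(2) \leq 8k^2 \|\vec{A}\|\epslan < 9k^2 \|\vec{A}\|\epslan$. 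There is no real obstacle here: the work is essentially a template, and the only subtle point (as in the previous lemma) is verifying that the bandwidth-based cancellation eliminates the $\overline{\vec{q}}_k \vec{e}_{k-1}^\T$ term so that the recurrence genuinely reduces to the Chebyshev-associated-polynomial form.
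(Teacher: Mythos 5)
Your proposal is correct and mirrors the paper's own proof essentially line for line: the same base cases ($\vr{0}=\vec{0}$ and $\vr{1}=-\Hk\vtol{0}$), the same use of the rearranged recurrence \cref{eqn:lanczos_R_factorization_fp} together with the bandwidth argument killing the $\vec{e}_{k-1}^\T$ term, the same appeal to \cref{thm:assoc_p}, and the same bounds from \cref{thm:cheb_mat_bd} and \cref{def:lanczos_precision}. The only cosmetic difference is that you keep $\|\vec{A}\|$ explicit in the final estimate (yielding $8k^2\|\vec{A}\|\epslan<9\|\vec{A}\|k^2\epslan$) rather than first bounding $\|\Hk\|\leq(9/8)\epslan$ as the paper does, which if anything matches the lemma's stated conclusion more directly.
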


\begin{proof}
Since \( \Rk \) is strictly upper triangular, again using the notation in \cref{eqn:short_vector_def} and recalling the perturbed recurrence \cref{eqn:lanczos_R_factorization_fp}, we have 
\begin{equation*}
    \vr{0} = \vec{0}
    ,\qquad
    \vr{1} = \vec{R}_k \Tkfp \vec{e}_0 
    =  (\Tkfp \vec{R}_k - \overline{\beta}_{k-1} \Qkfp^\T \overline{\vec{q}}_k \vec{e}_{k-1}^\T  - \vec{H}_k)\vec{e}_0
    =  - \Hk \vtol{0}.
\end{equation*}
Analogous to the recurrence used in the previous proof, but now using the perturbed recurrence \cref{eqn:lanczos_R_factorization_fp}, for $n=2,\ldots, k-1$ the \( \vr{n} \) satisfy the perturbed three-term recurrence 
\begin{align*}
    \vr{n}
    &= 2 \Rk\Tkfp\vtol{n-1} - \Rk \vtol{n-2}
    \\&= 2(\Tkfp\Rk - \beta_{k-1} \Qkfp^\T \vec{q}_k \vec{e}_{k-1}^\T  - \Hk  )\vtol{n-1}  - \vr{n-2}
    \\&= 2(\Tkfp\Rk \vtol{n-1} - \beta_{k-1} \Qkfp^\T \vec{q}_k \vec{e}_{k-1}^\T \vtol{n-1} - \Hk  \vtol{n-1  })  - \vr{n-2}
    \\&= 2 \Tkfp \vr{n-1} - \vr{n-2}  - 2\Hk \vtol{n-1}.
\end{align*}
As above, using \cref{thm:assoc_p} with $x\to \Tkfp$, $d_n\to\vec{r}_n$, and $f_n \to -\vec{H}_k \vtol{n}$,
\begin{equation*}
    \vr{n} 
    = -\pU_{n-1}(\Tkfp)  \Hk \vtol{0} - 2 \sum_{i=2}^{n} \pU_{n-i}(\Tkfp) \Hk \vtol{i-1}.
\end{equation*}
As before, for $\ell \leq k-1$, \cref{thm:cheb_mat_bd} gives bounds
\begin{equation*}
    \| \pU_{\ell}(\Tkfp) \| \leq 2k
    ,\qquad 
    \| \vtol{\ell} \|
    = \| \pT_{\ell}(\Tkfp) \|
    \leq 2,
\end{equation*}
and \cref{def:lanczos_precision} and the assumption $k>1$ give the bound
\[
\| \Hk \| \leq \| \vec{A} \| \epslan
\leq (1 + 1/(4k^2)) \epslan \leq (17/16) \epslan
< (9/8) \epslan.
\]
We therefore obtain
\begin{equation*}
    \| \vr{n} \|
    \leq 2\sum_{i=1}^{n} \| \pU_{n-i}(\Tkfp) \| \| \Hk \| \| \vtol{i-1} \|
    \leq 2 n (2 k) ((9/8) \epslan) (2)
    < 9 k^2 \epslan.
    \qedhere
\end{equation*}
\end{proof}

We are now prepared to apply \cref{eqn:T_identities} to bound the modified Chebyshev moments.
For clarity, and following \cite[Lemmas 2 and 3]{knizhnerman_96}, we split this into a few steps.
Note that the maximal degree of the matrix-polynomials in the quadratic forms we analyze are $2k-2$. 
Owing to the fact that $\Tkfp$ is like a Jacobi matrix, one might expect the maximal degree should be $2k-1$, and indeed, in \cite{knizhnerman_96} a similar results for polynomials up to degree $2k-1$ is proved. 
This is not needed for our analysis.

\begin{lemma}\label[lemma]{thm:QTiTj}
Suppose that Lanczos is run on $(\vec{A},\vec{b})$ for $k>1$ iterations with precision $\epslan<1/(5 k^{2}) $ and that
$\|\vec{A}\| \leq 1+1/(4k^2)$.
Then, for all $m,n\leq k-1$, 
\begin{equation*}
    | \vec{b}^\T \pT_{m}(\Tkfp) \Qkfp^\T \Qkfp\pT_{n}(\Tkfp) \vec{b} - \vec{e}_0^\T \pT_{m}(\Tkfp) \pT_{n}(\Tkfp) \vec{e}_0 | \leq 37 k^{2} \epslan.
\end{equation*}
\end{lemma}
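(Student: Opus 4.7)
The plan is to observe that, after (what appears to be) correcting an obvious typo where the outer $\vec{b}$'s should be $\vec{e}_0$'s, the quantity to bound is
\[
\vec{e}_0^\T \pT_m(\Tkfp)\bigl(\Qkfp^\T\Qkfp - \vec I\bigr)\pT_n(\Tkfp)\vec{e}_0 = \vtol{m}^\T\bigl(\Qkfp^\T\Qkfp - \vec I\bigr)\vtol{n}.
\]
So the entire job is to show that the Gram matrix $\Qkfp^\T\Qkfp$ of the computed Lanczos vectors, sandwiched between the Chebyshev polynomial images of $\vec{e}_0$, stays close to the identity sandwich. This is exactly what the previous two lemmas were designed to give us.

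First I would invoke the decomposition already set up in \cref{eqn:RD}: $\Qkfp^\T\Qkfp - \vec I = \Rk + \Rk^\T + (\Dk-\vec I)$. Splitting the quadratic form accordingly, I get three pieces:
\[
\vtol{m}^\T \Rk \vtol{n} \;+\; \vtol{m}^\T \Rk^\T \vtol{n} \;+\; \vtol{m}^\T(\Dk-\vec I)\vtol{n}.
\]
For each of the first two pieces I would apply Cauchy--Schwarz, moving $\Rk$ onto whichever of $\vtol{m},\vtol{n}$ is needed so that \cref{thm:RTe0} applies (both $m,n\le k-1$, which is the hypothesis of that lemma). That lemma bounds $\|\Rk\vtol{\ell}\|$ by $9 k^2\epslan$ (as its proof actually shows, the factor $\|\vec A\|$ in its statement is absorbed into the constants earlier); combined with $\|\vtol{\ell}\| \leq 2$ from \cref{thm:cheb_mat_bd}, each of the two off‑diagonal pieces is at most $18 k^2\epslan$.

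For the diagonal piece I would use \cref{def:lanczos_precision}, which gives $\|\Dk - \vec I\| \leq \epslan$, together with $\|\vtol{m}\|,\|\vtol{n}\|\leq 2$ to get $|\vtol{m}^\T(\Dk-\vec I)\vtol{n}|\le 4\epslan$. Summing yields $36 k^2\epslan + 4\epslan$, and since the hypothesis $k>1$ gives $k^2\geq 4$, the constant‑order term is dominated by $k^2\epslan$, producing the claimed bound $37 k^2\epslan$. There is no real obstacle here: \cref{thm:RTe0} has already done the work of propagating the local rounding errors through the polynomial recurrence, and the present lemma is just the book‑keeping step that turns an operator‑on‑$\vec{e}_0$ estimate into a quadratic‑form estimate via the $\Rk + \Rk^\T + \Dk$ decomposition of $\Qkfp^\T\Qkfp$.
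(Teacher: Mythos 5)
Your proof is correct and takes essentially the same route as the paper: decompose $\Qkfp^\T\Qkfp - \vec{I} = \Rk + \Rk^\T + (\Dk - \vec{I})$, bound the two $\Rk$-terms by $18 k^2 \epslan$ each using \cref{thm:RTe0} together with $\|\vtol{\ell}\|\leq 2$ from \cref{thm:cheb_mat_bd}, bound the $(\Dk-\vec{I})$-term by $4\epslan$ via \cref{def:lanczos_precision}, and absorb the constant into $k^2\epslan$ using $k\geq 2$. You also correctly flag that the outer $\vec{b}$'s in the statement are a typo for $\vec{e}_0$ and that the $\|\vec{A}\|$ factor in the statement of \cref{thm:RTe0} is already absorbed in its proof.
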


\begin{proof}
     Using the notation in \cref{eqn:short_vector_def} and the definitions of \( \Rk \) and $\Dk$,
    \begin{equation*}
        \vtol[\T]{m} \Qkfp^\T \Qkfp \vtol{n}
        =  \vtol[\T]{m} (\Rk+\Rk^\T + \vec{I} + (\Dk - \vec{I}))  \vtol{n}.
    \end{equation*}
    From \cref{def:lanczos_precision} we have $\| \Dk - \vec{I} \| \leq \epslan$.
    By assumption, $\|\vec{A}\|\leq 1+1/(4k^2)$ and $\epslan<1/(5k^2)$, so for all $\ell \leq k-1$, \cref{thm:cheb_mat_bd,thm:RTe0} respectively give bounds
    \[
    \|\vtol{\ell}\| = \| \pT_{\ell}(\Tkfp) \| \leq 2 
    ,\qquad 
    \|\vec{R}_k \vtol{\ell} \|
    = \| \vec{R}_k \pT_\ell(\Tkfp) \vec{e}_0\| \leq 9k^2\epslan.
    \]
    Combining these, we find, 
    \begin{align*}
        | \vtol{m} \Qkfp^\T \Qkfp \vtol{n} 
        - \vtol[\T]{m} \vtol{n} | \nonumber
        &\leq  \| \vtol{m} \| \| \Rk \vtol{n} \| + \| \vtol{n} \| \| \Rk \vtol{m} \| +  \| \Dk - \vec{I} \|  \| \vtol{m} \|  \| \vtol{n} \|
        \\&\leq 2(9k^{2}\epslan) + 2(9k^{2}\epslan) + \epslan (2)(2)
        \leq (36 k^{2} + 4)\epslan.
    \end{align*}
    Since $k>1$, $36k^2+4\leq 37k^2$.
\end{proof}

\begin{lemma}\label[lemma]{thm:TiTj}
Suppose that Lanczos is run on $(\vec{A},\vec{b})$ for $k>1$ iterations with precision $\epslan<1/(5 k^{2}) $ and that
$\|\vec{A}\| \leq 1+1/(4k^2)$.
Then, for all $m,n\leq k-1$, 
\begin{equation*}
    | \vec{b}^\T \pT_{m}(\vec{A}) \pT_{n}(\vec{A}) \vec{b} - \vec{e}_0^\T \pT_{m}(\Tkfp) \pT_{n}(\Tkfp) \vec{e}_0 | \leq 127 k^{2} \epslan.
\end{equation*}
\end{lemma}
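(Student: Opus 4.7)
The plan is to use the shorthand from \cref{eqn:short_vector_def} to recognize that the two quadratic forms we must compare are
\[
\vec{b}^\T \pT_m(\vec{A}) \pT_n(\vec{A}) \vec{b} = \vt[\T]{m}\vt{n}
,\qquad
\vec{e}_0^\T \pT_m(\Tkfp) \pT_n(\Tkfp) \vec{e}_0 = \vtol[\T]{m}\vtol{n}.
\]
\Cref{thm:QTiTj} already controls the difference $\vtol[\T]{m}\Qkfp^\T \Qkfp \vtol{n} - \vtol[\T]{m}\vtol{n}$, so by the triangle inequality it suffices to bound
\[
\big| \vt[\T]{m}\vt{n} - \vtol[\T]{m}\Qkfp^\T \Qkfp \vtol{n} \big|
\]
and add the $37 k^2 \epslan$ contribution from \cref{thm:QTiTj}.

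To bound this quantity, I would substitute $\vt{n} = \Qkfp \vtol{n} + \vd{n}$ (the definition of $\vd{n}$ in \cref{eqn:short_vector_def}) and expand the inner product, giving
\[
\vt[\T]{m}\vt{n} - \vtol[\T]{m}\Qkfp^\T \Qkfp \vtol{n} = \vd[\T]{m}\Qkfp \vtol{n} + (\Qkfp \vtol{m})^\T \vd{n} + \vd[\T]{m}\vd{n}.
\]
Each of the three terms is then bounded by Cauchy--Schwarz. For the cross terms I use $\|\vd{n}\| \leq 9 k^2 \epslan$ (\cref{thm:TAe0}) together with
\[
\| \Qkfp \vtol{n} \| \leq \| \vt{n} \| + \| \vd{n} \| \leq \|\pT_n(\vec{A})\| + 9 k^2 \epslan \leq 2 + 9 k^2 \epslan,
\]
where $\|\pT_n(\vec{A})\| \leq 2$ follows from \cref{thm:cheb_mat_bd}. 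Using the hypothesis $\epslan < 1/(5k^2)$, we have $9 k^2 \epslan < 9/5$, so each cross term is at most $9 k^2 \epslan \cdot (2 + 9/5) < 34.2\, k^2 \epslan$, and the quadratic term $\vd[\T]{m}\vd{n}$ is at most $(9 k^2 \epslan)^2 < 16.2\, k^2 \epslan$ by another application of $\epslan < 1/(5k^2)$.

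Summing the pieces gives
\[
|\vt[\T]{m}\vt{n} - \vtol[\T]{m}\vtol{n}| \leq 37 k^2 \epslan + 2 \cdot 34.2\, k^2 \epslan + 16.2\, k^2 \epslan < 127 k^2 \epslan,
\]
which is the claimed bound. There is no real obstacle here: the work is essentially algebraic expansion followed by the two previous lemmas, and the only care required is tracking the numerical constants to ensure that the collected terms do not exceed $127$. The clean separation into (i) a ``Gram matrix'' defect bounded by \cref{thm:QTiTj} and (ii) polynomial residual errors bounded by \cref{thm:TAe0} is what makes the proof short.
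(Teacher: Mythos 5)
Your proposal is correct and follows essentially the same route as the paper: expand $\vt[\T]{m}\vt{n}$ via $\vt{n} = \Qkfp\vtol{n} + \vd{n}$, isolate the Gram-matrix defect $\vtol[\T]{m}\Qkfp^\T\Qkfp\vtol{n} - \vtol[\T]{m}\vtol{n}$ (bounded by \cref{thm:QTiTj}), and control the cross and quadratic residual terms via \cref{thm:TAe0} and \cref{thm:cheb_mat_bd}. The only cosmetic difference is that you use the sharper bound $9k^2\epslan < 9/5$ where the paper uses the cruder $9k^2\epslan < 2$ (giving $\|\Qkfp\vtol{\ell}\| < 4$ and landing on $37 + 36 + 36 + 18 = 127$ exactly), but both collect to the same stated constant.
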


\begin{proof}
    Using the notation in \cref{eqn:short_vector_def},
    \begin{equation*}
        \vt[\T]{m} \vt{n}
        = (\vd{m} + \Qkfp \vtol{m} )
^\T(\vd{n} + \Qkfp \vtol{n} )
        = \vd[\T]{m}\vd{n} + \vd[\T]{m} \Qkfp \vtol{n} 
        + \vtol[\T]{m} \Qkfp^\T \vd{n}
        + \vtol[\T]{m} \Qkfp^\T \Qkfp \vtol{n}.
    \end{equation*}
    Thus, applying the triangle inequality and submultiplicativity of the operator norm,
    \begin{align}
    \label{eqn:titj_titj_fp}
        | \vt[\T]{m} \vt{n} - \vtol[\T]{m} \vtol{n} |
        &\leq 
        | \vtol{m} \Qkfp^\T \Qkfp \vtol{n} 
        - \vtol[\T]{m} \vtol{n} |
        + \|\vd{m}\| \|\Qkfp\vtol{n}\| + \|\vd{n}\|\|\Qkfp \vtol{m}\| + \|\vd{m}\| \| \vd{n} \|.
    \end{align}
     By assumption, $\|\vec{A}\|\leq 1+1/(4k^2)$ and $\epslan<1/(5k^2)$, so for all $\ell \leq k-1$, \cref{thm:cheb_mat_bd,thm:TAe0} respectively give bounds
    \[
    \|\vtol{\ell}\| = \| \pT_{\ell}(\Tkfp) \| \leq 2 
    \qquad
    \| \vd{\ell} \| =
    \|\pT_{n}(\vec{A}) \vec{b} - \Qkfp \pT_{n}(\Tkfp) \vec{e}_0 \|
    \leq 9 k^{2} \epslan.
    \]
    This implies $9k^2 \epslan < 2$, so we find that
    \begin{equation*}
        \| \Qkfp \vtol{\ell} \|
        = \| \vd{\ell} + \vt{\ell} \|
        \leq \| \vd{\ell} \| + \| \vt{\ell} \|
        < 2 + 2
        = 4.
    \end{equation*}
    Under these same assumptions, \cref{thm:QTiTj} gives a bound 
    \[
    | \vtol{m} \Qkfp^\T \Qkfp \vtol{n} 
        - \vtol[\T]{m} \vtol{n} |
        \leq 37k^2 \epslan.
    \]
    Plugging the above bounds into \cref{eqn:titj_titj_fp} we find
    \begin{equation*}
        | \vt[\T]{m} \vt{n} - \vtol[\T]{m} \vtol{n} |
        \leq 37 k^{2} \epslan + 9 k^{2}\epslan (4) + 9 k^{2}\epslan(4) + (9 k^{2}\epslan)(2)
        \leq 127 k^2 \epslan. \qedhere
    \end{equation*}
\end{proof}

A bound for the modified moments with respect to the Chebyshev polynomials, akin to \cite[Theorem 1]{knizhnerman_96}, is now immediate.

\begin{theorem}\label{thm:cheb_moments_bd}
Suppose that Lanczos is run on $(\vec{A},\vec{b})$ for $k>1$ iterations with precision $\epslan < 1/(5  k^{2})$ and that
$\|\vec{A}\| \leq 1+1/(4k^2)$.
Then, for all $n\leq 2k-2$, 
\begin{equation*}
    \left| \int \pT_{n}(x) \muN(\d{x}) - \int \pT_{n}(x) \mukfp(\d{x}) \right|
    \leq 381 k^{2} \epslan.
\end{equation*}
\end{theorem}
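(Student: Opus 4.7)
The plan is to reduce the degree-$n$ modified Chebyshev moment to quadratic forms involving the Chebyshev polynomials of degree at most $k-1$ in $\vec{A}$ and $\Tkfp$, to which \cref{thm:TiTj} applies directly. Recall that $\muN$ and $\mukfp$ are the VESDs of $(\vec{A},\vec{b})$ and $(\Tkfp,\vec{e}_0)$ respectively, so
\[
\int \pT_n(x)\,\muN(\d x) = \vec{b}^\T \pT_n(\vec{A})\vec{b}
,\qquad
\int \pT_n(x)\,\mukfp(\d x) = \vec{e}_0^\T \pT_n(\Tkfp)\vec{e}_0.
\]
The idea is then to apply the identities in \cref{eqn:T_identities} to write $\pT_n$ as a linear combination of products $\pT_i \pT_j$ with $i,j\leq k-1$, and then invoke \cref{thm:TiTj} on each resulting quadratic form.

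First I would split into cases according to the parity of $n$. For even $n = 2m$ with $n\leq 2k-2$, we have $m\leq k-1$, and the identity $\pT_{2m}(x) = 2\pT_m(x)^2 - 1$ yields
\[
\vec{b}^\T \pT_{2m}(\vec{A})\vec{b} - \vec{e}_0^\T \pT_{2m}(\Tkfp)\vec{e}_0
= 2\bigl(\vec{b}^\T \pT_m(\vec{A})^2 \vec{b} - \vec{e}_0^\T \pT_m(\Tkfp)^2 \vec{e}_0\bigr),
\]
(the $-1$ contributions cancel). Applying \cref{thm:TiTj} with both indices equal to $m$ bounds this by $2\cdot 127 k^2 \epslan = 254 k^2 \epslan$. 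For odd $n = 2m+1$ with $n\leq 2k-2$, we have $m\leq k-2$ and hence $m+1\leq k-1$, so the identity $\pT_{2m+1}(x) = 2\pT_m(x)\pT_{m+1}(x) - x$ gives
\[
\vec{b}^\T \pT_{2m+1}(\vec{A})\vec{b} - \vec{e}_0^\T \pT_{2m+1}(\Tkfp)\vec{e}_0
= 2\bigl(\vec{b}^\T \pT_m(\vec{A})\pT_{m+1}(\vec{A})\vec{b} - \vec{e}_0^\T \pT_m(\Tkfp)\pT_{m+1}(\Tkfp)\vec{e}_0\bigr)
- \bigl(\vec{b}^\T \vec{A}\vec{b} - \vec{e}_0^\T \Tkfp \vec{e}_0\bigr).
\]
The first bracket is bounded by $2\cdot 127 k^2 \epslan$ via \cref{thm:TiTj} applied with indices $m$ and $m+1$; the second bracket is the $(m,n) = (0,1)$ special case of the same lemma and is bounded by $127 k^2 \epslan$. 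Adding these yields the claimed bound $381 k^2 \epslan$. The edge case $n=0$ is trivial since both integrals equal $1$.

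There is no real obstacle here: \cref{thm:TiTj} does essentially all the work, and the only care required is in index bookkeeping to verify that both arguments of each invocation of \cref{thm:TiTj} lie in the admissible range $\{0,\ldots,k-1\}$ when $n\leq 2k-2$. The final constant $381 = 3\cdot 127$ arises from the odd case, which produces one more copy of the $127k^2\epslan$ error than the even case.
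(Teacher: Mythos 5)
Your proposal is correct and matches the paper's proof essentially line for line: the same reduction to quadratic forms via the Chebyshev identities in \cref{eqn:T_identities}, the same case split by parity with the same index bookkeeping ($m\leq k-1$ in the even case, $m\leq k-2$ in the odd case), and the same accounting producing $254 k^2\epslan$ and $381 k^2\epslan$.
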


\begin{proof}
    By definition, 
    \begin{equation*}
        \left| \int \pT_{n}(x) \muN(\d{x}) - \int \pT_{n}(x) \mukfp(\d{x}) \right|
        = | \vec{b}^\T \pT_{n}(\vec{A}) \vec{b} - \vec{e}_0^\T \pT_{n}(\Tkfp)  \vec{e}_0 |.
    \end{equation*}

    First, suppose $n=2i$ for $i \leq k-1$.
    As noted in \cref{eqn:T_identities}, $T_n(x) = 2 T_{i}^2(x) - 1$.
    By assumption $\vec{b}^\T \vec{b} = \vec{e}_0^\T \vec{e}_0 = 1$. 
    Therefore, we can apply \cref{thm:TiTj} to bound
    \begin{equation*}
        | \vec{b}^\T \pT_{n}(\vec{A}) \vec{b} - \vec{e}_0^\T \pT_{n}(\Tkfp)  \vec{e}_0 | 
        = | 2\vt[\T]{i} \vt{i} - 2\vtol[\T]{i} \vtol{i} | 
        \leq 2 (127 k^{2}\epslan) = 254 k^{2}\epslan.
    \end{equation*}
    
    Now, assume $n=2i+1$, $i\leq k-2$.
    Then $T_n(x) = 2T_i(x) T_{i+1}(x)-x$.
    Then, since $\pT_1(x) = x$, again using \cref{thm:TiTj},
    \begin{align*}
        | \vec{b}^\T \pT_{n}(\vec{A}) \vec{b} - \vec{e}_0^\T \pT_{n}(\Tkfp)  \vec{e}_0 | 
        &= | (2\vt[\T]{i} \vt{i+1} - \vec{b}^\T \vec{A} \vec{b}) - (2\vtol[\T]{i} \vtol{i+1} - \vec{e}_0^\T \Tkfp \vec{e}_0) | 
        \\&\leq 2| \vt[\T]{i} \vt{i+1} - \vtol[\T]{i} \vtol{i+1} |  + |\vec{b}^\T \vec{A}\vec{b} - \vec{e}_0^\T \Tkfp\vec{e}_0 |
        \\&\leq 2 (127 k^{2}\epslan) + 127 k^{2}\epslan
        = 381 k^{2} \epslan.
    \end{align*}
    The result follows.
\end{proof}

\subsection{General modified moments via a change of basis}

When the orthogonal polynomials $\op{n}(\cdot\,;\mu)$ have reasonable Chebyshev series, then a statement similar to \cref{thm:cheb_moments_bd} holds for the moments with respect to $\mu$.

\begin{corollary}[Stability of moments wrt. $\mu$]\label[corollary]{thm:moments}
Let $\muN$ be the VESD  for $(\vec{A},\vec{b})$ and $\mu$ a unit-mass measure with support contained in $[-1,1]$. 

Then, there exist absolute constants $C, D$ such that, whenever 
Lanczos is run on $(\vec{A},\vec{b})$ for $k\geq 1$ iterations with precision $\epslan < 1/(C k^{2})$ to produce $\mukfp$ and $\operatorname{supp}(\muN) \subseteq [-1-\eta,1+\eta]$ where $ \eta < 1/(16 k^2)$,
then 
$\momDel{k}(\muN,\mukfp;\mu) \leq D \opMax{k}(\mu;[-1,1]) k^{3} \epslan$.
\end{corollary}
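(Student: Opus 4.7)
The plan is a straightforward change of basis that reduces the bound on modified moments with respect to $\mu$ to \cref{thm:cheb_moments_bd}. Since $\mu$ is supported in $[-1,1]$, for each $n \le 2k-1$ I would expand
\[
\op{n}(x;\mu) = \sum_{j=0}^{n} c_{n,j}\, \pT_j(x),
\qquad
c_{n,j} = \big(2 - \bOne(j=0)\big) \int_{-1}^{1} \op{n}(x;\mu)\, \pT_j(x)\, \mu_{\pT}(\d{x}),
\]
using orthogonality of $\{\pT_j\}$ with respect to $\mu_{\pT}$. Since $\mu_{\pT}$ is a probability measure on $[-1,1]$ and $\|\pT_j\|_{[-1,1]} \le 1$, this yields the uniform coefficient bound $|c_{n,j}| \le 2\,\opMax{k}(\mu;[-1,1])$.

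Next, by linearity of integration against $\muN - \mukfp$,
\[
\mom{n}(\muN;\mu) - \mom{n}(\mukfp;\mu) = \sum_{j=0}^{n} c_{n,j} \bigg(\int \pT_j(x)\,\muN(\d{x}) - \int \pT_j(x)\,\mukfp(\d{x})\bigg).
\]
The corollary's hypotheses imply those of \cref{thm:cheb_moments_bd}: the support condition $\operatorname{supp}(\muN) \subseteq [-1-\eta,1+\eta]$ with $\eta < 1/(16k^2)$ forces $\|\vec A\| \le 1 + 1/(4k^2)$, and $\epslan < 1/(Ck^2)$ with $C \ge 5$ forces $\epslan < 1/(5k^2)$. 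So each Chebyshev-moment difference is bounded by $381\, k^2 \epslan$, and the triangle inequality together with the bound on $|c_{n,j}|$ and the fact that the sum has at most $n+1 \le 2k$ terms gives
\[
|\mom{n}(\muN;\mu) - \mom{n}(\mukfp;\mu)| \le 2k \cdot 2\,\opMax{k}(\mu;[-1,1]) \cdot 381\, k^2 \epslan.
\]
Taking the maximum over $n \le 2k-1$ delivers the claimed bound for some absolute $D$.

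The one technical wrinkle is the degree-$2k-1$ Chebyshev term appearing in the expansion of $\op{2k-1}(\cdot\,;\mu)$: \cref{thm:cheb_moments_bd} as stated only bounds Chebyshev moments through degree $2k-2$, since the inductive arguments in \cref{thm:TAe0,thm:RTe0} rely on $\vec{e}_{k-1}^\T \vtol{n-1} = 0$, and this identity fails at $n = k$. At that index the perturbed Lanczos three-term recurrence picks up a non-negligible boundary contribution, of order $\overline{\beta}_{k-1} \vec{e}_{k-1}^\T \vtol{k-1}$, that is not $O(\epslan)$, so the same induction cannot be pushed one step further. For this single missing degree I would invoke the more refined analysis in \cite{knizhnerman_96}, which is proved through degree $2k-1$ and only affects the absolute constants in the final estimate.
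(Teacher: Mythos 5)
Your proof is correct and follows essentially the same route as the paper: expand $\op{n}(\cdot\,;\mu)$ in the Chebyshev basis, bound the coefficients $|c_{n,i}| \le 2\opMax{k}(\mu;[-1,1])$ via orthogonality with respect to $\mu_{\pT}$, and then apply \cref{thm:cheb_moments_bd} term by term. You also correctly identify the genuine wrinkle: \cref{thm:cheb_moments_bd} as stated only reaches degree $2k-2$, and the inductive step fails at $n=k$ precisely because $\vec e_{k-1}^\T \vtol{k-1} \neq 0$.

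Where you diverge from the paper is in how that last degree is handled. You propose to close the gap by appealing to Knizhnerman's analysis, which does reach degree $2k-1$. The paper instead takes a more self-contained route: it imposes the slightly stronger precision hypothesis $\epslan < 1/(5(k+1)^2)$ (hence the choice $C=20$, since $1/(20k^2) \le 1/(5(k+1)^2)$ for $k\ge 1$) together with the support condition $\eta < 1/(4(k+1)^2)$, and then applies \cref{thm:cheb_moments_bd} with $k$ replaced by $k+1$, producing the constant $381(k+1)^2$. This covers Chebyshev moments up through degree $2(k+1)-2 = 2k$, which includes the needed degree $2k-1$. The justification the paper leaves implicit is that running Lanczos one extra step yields $\overline{\vec T}_{k+1}$ whose quadrature measure $\overline{\mu}_{k+1}$ has the same moments as $\mukfp$ through degree $2k-1$ (since $\Tkfp$ is its leading $k \times k$ principal submatrix, and the bottom row and column of $\overline{\vec T}_{k+1}$ cannot influence $\vec e_0^\T \overline{\vec T}_{k+1}^m \vec e_0$ for $m \le 2k-1$). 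Your citation-based fix is valid, but the paper's shift-$k$ trick is arguably preferable here because it keeps the proof self-contained — indeed, the paper explicitly remarks after \cref{thm:RTe0} that the sharper $2k-1$ result from Knizhnerman "is not needed for our analysis."

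One minor point present in both your write-up and the paper's: the support condition $\operatorname{supp}(\muN) \subseteq [-1-\eta, 1+\eta]$ does not literally force $\|\vec A\| \le 1 + \eta$, since $\vec A$ could have eigenvalues with zero $\vec b$-overlap outside that interval; one needs the (standard, but unstated) convention that such components of $\vec A$ can be discarded without affecting the Lanczos iteration.
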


\begin{proof}
Note that $\int \pT_{n}(x)^2 \mu_{\pT}(\d{x}) = 1/2$ for $n\geq 1$.
We can therefore decompose $\op{n}(x;\mu)$, $n\geq 0$ into Chebyshev polynomials of the first kind by
\begin{equation*}
    \op{n}(x;\mu)
    = c_{n,0} \pT_{0}(x) + c_{n,1} \pT_1(x) + \cdots + c_{n,n} \pT_{n}(x)
    ,
\end{equation*}
where the coefficients are obtained by
\begin{equation*}
    c_{n,0} := 
    \int \op{n}(x;\mu) \mu_{\pT}(\d{x})
    ,\qquad
    c_{n,i} := 2\int \op{n}(x;\mu) \pT_i(x) \mu_{\pT}(\d{x})
    ,\qquad 1\leq i \leq n
    .
\end{equation*}
Note that for all $n \leq 2k-1$ and $i\leq n$, since $\|T_i\|_{[-1,1]}\leq 1$,
\begin{equation}
    \label{eqn:coeff_bd}
    |c_{n,i}| \leq 2 \int \left| \op{n}(x;\mu)\right| |\pT_i(x)| \mu_{\pT}(\d{x})
    \leq 2 \opMax{k}(\mu;[-1,1]).
\end{equation}

Assuming $\epslan < 1/(5  (k+1)^{2})$ and $\operatorname{supp}(\muN) \subseteq [-1-\eta,1+\eta]$ where $ \eta < 1/(16k^2) \leq 1/(4 (k+1)^2)$ we can apply \cref{thm:cheb_moments_bd,eqn:coeff_bd} to get the bound, for $n \leq 2k-1$,
\begin{align*}
     | \mom{n}(\muN;\mu) - \mom{n}(\mukfp;\mu)\big| 
     &= \left| \sum_{i=1}^{n} c_{n,i} \left( \int \pT_i(x) \muN(\d{x}) - \int \pT_i(x) \mukfp(\d{x})   \right)  \right|
     \\&\leq \sum_{i=1}^{n} | c_{n,i} | \left| \int \pT_i(x) \muN(\d{x}) - \int \pT_i(x) \mukfp(\d{x})  \right|
     \\&
     \leq 4k \opMax{k}(\mu;[-1,1]) 381 (k+1)^{2} \epslan.
\end{align*}
Finally, since $k\geq 1$, $1/(5(k+1)^2) \geq 1/(20k^2)$W and $4 (381)k(k+1)^{2} \leq 6096 k^{3}$.
Setting $C=20$ and $D=6096$ establishes the result.
\end{proof}
Clearly a better bound could be obtained directly from the coefficients\footnote{The coefficients $c_{n,i}$ define a so-called connection coefficient matrix \cite{webb_olver_21}. Since we are always interested in an expansion in the Chebyshev polynomials of the first kind, we do not introduce this concept in generality.} $c_{n,i}$ rather than in terms of the maximum value of the $p_n(\cdot\,;\mu)$.
However, we are more interested in the existence of bounds which deteriorate with polynomials of $k$ rather than the precise dependencies on $k$, and the present approach results in slightly simpler statements and proofs.

\subsection{Proof of backwards stability}

We are now prepared to prove \cref{thm:backwards}. 
The approach is straightforward: transform $[\tta,\ttb]$ to $[-1,1]$ and then apply \cref{thm:moments} to get a bound for the moments. 
This will give us a bound on the size of $h$ in \cref{eqn:new_measure}.

\begin{proof}[Proof of \cref{thm:backwards}]

If $\|h\|_{[\tta,\ttb]} < 1$, then as described in \cref{sec:setup}, $\mu_*$ is a well-defined positive measure whose moments agree $\mukfp$ through degree $2k-1$.

Define
\begin{equation*}
    \hat{\vec{A}} := \frac{2}{\ttb-\tta} \vec{A} - \frac{\ttb+\tta}{\ttb-\tta} \vec{I}
    ,\qquad
    \hat{\overline{\alpha}}_i := \frac{2}{\ttb-\tta} \overline{\alpha}_i - \frac{\ttb+\tta}{\ttb-\tta},
\end{equation*}
\begin{equation*}
    \hat{\overline{\beta}}_i  := \frac{2}{\ttb-\tta} \overline{\beta}_i
    ,\qquad
    \Fkp  := \frac{2}{\ttb-\tta} \vec{F}_k
    ,\qquad
    \Hkp  := \frac{2}{\ttb-\tta} \Hk
    ,\qquad
    \hat{\eta}_k := \frac{2}{\ttb-\tta}\hyperref[eqn:eta]{{\color{black}\eta_k}}.
\end{equation*}
Then, 
\begin{equation*}
    \label{eqn:lanczos_factorization_fp_prime}
    \hat{\vec{A}} \Qkfp = \Qkfp \Tkfpp + \hat{\overline{\beta}}_{k-1} \overline{\vec{q}}_{k} \vec{e}_{k-1}^\T + \Fkp,
\end{equation*}
\begin{equation*}
    \label{eqn:lanczos_R_factorization_fp_prime}
    \Tkfpp \Rk = \Rk\Tkfpp  + \hat{\overline{\beta}}_{k-1}\Qkfp^\T \overline{\vec{q}}_k\vec{e}_{k-1}^\T + \Hkp ,
\end{equation*}
\begin{equation*}
    \Lambda(\Tkfpp) \subseteq [ \lmin(\hat{\vec{A}}) - \hat{\eta}_k, \lmax(\hat{\vec{A}}) + \hat{\eta}_k]
\end{equation*}
where
\begin{equation*} 
    \| \Fkp \| \leq \frac{2 \| \vec{A} \|}{\ttb-\tta} \epslan
    ,\qquad
     \| \Dk - \vec{I} \| \leq \epslan
    ,\qquad
    \| \Hkp \| \leq \frac{2 \| \vec{A} \|}{\ttb-\tta} \epslan
    ,\qquad
    \hat{\eta}_k \leq \frac{2 \| \vec{A} \|}{\ttb-\tta} \epslan.
\end{equation*}
Thus, $(\Tkfpp,\Qkfp)$ can be viewed as the output of the Lanczos algorithm run on $(\hat{\vec{A}},\vec{b})$ with precision
\begin{equation*}
    \epslanp:= \max\left\{ \frac{2\| \vec{A} \|}{\ttb-\tta} ,1 \right\} \epslan
    = \sigma\epslan.
\end{equation*}

Define
\begin{align*}
    t(x) = \frac{2}{b -a} \left( x - \frac{b+a}{2} \right), \qquad t([a,b]) = [-1,1],
\end{align*}
and let $\muNp$ and $\hat{\mu}$ be the pushforward measures of $\mu_N$ and $\mu$, respectively, under $t$.
That is, for any measurable function $f$,
\begin{equation*}
    \int f(x) \d\muN(x)
    = \int f(t(x)) \d\muNp(x)
    ,\qquad
    \int f(x) \d\mu(x)
    = \int f(t(x)) \d\hat{\mu}(x).
\end{equation*}
This implies modified moments of $\muNp$ with respect to the orthogonal polynomials of $\hat{\mu}$ are the same as those of $\muN$ with respect to $\mu$.
Indeed,
\begin{equation*}
\op{n}(t(x);\hat{\mu})
= \op{n}(x;\mu)
\end{equation*}
so
\begin{equation*}
    \mom{n}(\muN;\mu) 
    = \int \op{n}(x;\mu) \d\muN
    = \int \op{n}(t(x);\hat{\mu}) \d\muN
    = \int \op{n}(x;\hat{\mu}) \d\muNp
    = \mom{n}(\muNp;\hat{\mu}).
\end{equation*}
Moreover,
\begin{equation*}
    \operatorname{supp}(\muNp) \subseteq [-1-1/(16k^2), 1+1/(16k^2)].
\end{equation*}

The assumption on $\epslan$ ensures $\epslanp < 1/(Ck^2)$, so \cref{thm:moments} gives a bound
\begin{equation*}
    \momDel{n}(\mukfp,\muN;\mu) 
    = \momDel{n}(\mukfpp,\muNp;\hat{\mu}) 
    \leq D \opMax{k}(\mu;[\tta,\ttb]) k^3 \epslanp.
\end{equation*}
Since the moments $\mom{n}(\mu_*;\mu) = \mom{n}(\mukfp;\mu)$ for $n \leq 2k-1$, we get \cref{thm:backwards}\ref{thm:backwards:nearby_input}.
Using the triangle inequality we also have
\begin{align*}
    \momDel{n}(\mukfp,\mu;\mu)
    \leq \momDel{n}(\mukfp,\muN;\mu) + \momDel{n}(\muN,\mu;\mu).
\end{align*}
From \cref{eqn:h_bd} we have $\|\poly{h}\|_{[\tta,\ttb]} \leq 2k \momDel{k}(\mukfp,\mu;\mu) \opMax{k}(\mu; [\tta,\ttb])$.
Thus,
\begin{equation*}    
    \|\poly{h}\|_{[\tta,\ttb]} 
    \leq  2k \opMax{k}(\mu; [\tta,\ttb]) \big( \momDel{n}(\mukfp,\muN;\mu) + \momDel{n}(\muN,\mu;\mu) \big).
\end{equation*}
Replacing $\epslanp$ with $\sigma \epslan$ gives \cref{thm:backwards}\ref{thm:backwards:h_bd}.
\end{proof}

\section{Perturbation theory for recurrence coefficients}
\label{sec:forwards}

We will now show that $\Tkfp$ is close to $\Tk$, at least when $\muN$ is near a sufficiently nice measure $\mu_\infty$.
Our forward stability results, which are stated at the end of the section, are derived by applying \cref{thm:backwards} with $\mu = \mu_\infty$ and then using a Riemann--Hilbert approach to analyze perturbations to the orthogonal polynomials of $\mu_\infty$.

The conditioning of the map from the modified moments of a measure to the corresponding recurrence coefficients was studied in \cite{fischer_96}; see also \cite[Section 2.1.6]{gautschi_04}.
However, it is not immediately clear how to apply the formulas and bounds to the situation in which $k$ grows as $\epslan$ tends to zero.

To study such a scaling, we use an alternate approach.
As a matter of technical convenience, we assume that $\mu$ has a density with square root behavior at the edges.
\begin{assumption}\label[assumption]{asm:mu_sqrt}
Suppose that $\operatorname{supp}(\mu) = [\tta,\ttb]$ and that $\mu$ has density
\begin{equation*}
    \rho(z) = g(z) \sqrt{(\ttb-z)(z-\tta)},
\end{equation*}
where $g$ is positive on $[\tta,\ttb]$ and analytic on an open set that contains $[\tta,\ttb]$.
\end{assumption}

Note that through the obvious affine transformation, we now assume, without loss of generality, that $\operatorname{supp}(\mu) = [-1,1]$.  We show that when $\mu$ is perturbed slightly, the recurrence coefficients for the perturbed measure are near those of $\mu$.
This culminates in \cref{thm:forwards}, a forward stability result for the Lanczos algorithm given at the end of the section.

\newcommand{\ttaa}{-1}
\newcommand{\ttbb}{1}
Suppose $\mu$ satisfies \cref{asm:mu_sqrt}, $[\tta,\ttb] = [\ttaa,\ttbb]$ and let $\ppi_n(z;\mu)$ denote the $n$-th monic orthogonal polynomial. 
Consider the matrix-valued function
\begin{equation*}
    \vec{Y}_n(z;\mu) = 
    \begin{bmatrix}
    \ppi_n(z;\mu)  & c_n(z;\mu) \\
    \gamma_{n-1}(\mu) \ppi_{n-1}(z;\mu) & \gamma_{n-1}(\mu) c_{n-1}(z;\mu)
    \end{bmatrix}
    ,\qquad n\geq 1,
\end{equation*}
where
\begin{equation*}
    c_n(z;\mu) := \frac{1}{2\pi \ii} \int \frac{\ppi_n(x;\mu)}{x-z} \mu(\d{x})
    ,\qquad
    \gamma_{n-1}(\mu) := -2\pi \ii \| \ppi_n(\cdot\,;\mu) \|_{L^2(\mu)}^{-2}.
\end{equation*}
For a function $y$, analytic in $\mathbb C \setminus [\ttaa,\ttbb]$, define the boundary values
\begin{equation*}
y^\pm(z) := \lim_{\epsilon \downarrow 0} y(z \pm \ii \epsilon), \quad z \in [\ttaa,\ttbb],
\end{equation*}
provided this limit exists.

With this notation in mind, $\vec{Y}_n(z;\mu)$ has the following properties:
\begin{itemize}
\item $\displaystyle \vec{Y}_n^+(z;\mu) = \vec{Y}_n^-(z;\mu) 
    \begin{bmatrix}
    1 & {\rho(z)} \\ 0 & 1 
    \end{bmatrix}
    ,\qquad 
    z\in [\ttaa,\ttbb],$
\item $\displaystyle
    \vec{Y}_n(z;\mu) z^{-n \bm{\sigma}_3}
    = \vec{I} + O(z^{-1})
    ,\quad
    z\to\infty, \quad \bm{\sigma}_3 := \mathrm{diag}(1,-1)$,
\item $\displaystyle\det\vec{Y}_n(z;\mu) = 1,$ and
\item $\vec{Y}_n(z;\mu)$ is analytic in $\mathbb{C}\setminus[\ttaa,\ttbb]$.
\end{itemize}
This is the so-called Fokas--Its--Kitaev Riemann--Hilbert problem \cite{fokas_its_kitaev_92}:  
The problem of finding the sectionally analytic function $\vec{Y}_n$ from the stated conditions.
References \cite{kuijlaars_03,deift_00} provide a comprehensive introduction.  It has been used in many contexts, both computationally and asymptotically, see \cite{olver_trogdon_13,townsend_trogdon_olver_14,deift_00}, for example.  But it can also be used for perturbation theory \cite{ding_trogdon_21}.  The Riemann--Hilbert representation play a role similar to the contour integral representation of classical orthogonal polynomials and allows one to estimate quantities related to the polynomials (e.g., recurrence coefficients) under consideration.  And even in the classical cases, it can prove to be a more powerful tool.  Below we will use Riemann--Hilbert theory to estimate $Y_n(z;\mu)$ which, via $Y_n^1(\mu)$ below, gives estimates on the recurrence coefficients for orthogonal polynomials, i.e., the output of the Lanczos iteration. 

Note that the orthonormal polynomials $\op{n}$ satisfy \begin{equation*}
    \op{n}(z;\mu) = \frac{\ppi_n(z;\mu)}{\|\ppi_n(\cdot\,;\mu)\|_{L^2(\mu)}}, \qquad \|\ppi_n(\cdot\,;\mu)\|_{L^2(\mu)}^2 = \int \ppi_n(x;\mu)^2 \mu(\d x).
\end{equation*}
The recurrence coefficients for the orthonormal polynomials are $(\alpha_j(\mu))_{j\geq 0}$, $(\beta_j(\mu))_{j \geq 0}$ such that, with $\beta_{-1}(\mu) := 0 =: p_{-1}(x;\mu)$,
\begin{equation*}
    x \op{n}(x;\mu) = \alpha_n(\mu) \op{n}(x;\mu) + \beta_n(\mu) p_{n+1}(x;\mu) + \beta_{n-1}(\mu) p_{n-1}(x;\mu).
\end{equation*}

These recurrence coefficients can be obtained directly from $\vec Y_n(z;\mu)$ via the formulae \cite{deift_00}
\begin{align*}    
    \alpha_n(\mu) &= [\vec{Y}_n^{(1)}(\mu)]_{1,1} - [\vec{Y}_{n+1}^{(1)}(\mu)]_{1,1}, \\
    \beta_n^2(\mu) &= \frac{[\vec{Y}_{n+1}^{(1)}(\mu)]_{1,2}}{[\vec{Y}_n^{(1)}(\mu)]_{1,2}} = {[\vec{Y}_n^{(1)}(\mu)]_{1,2}[\vec{Y}_n^{(1)}(\mu)]_{2,1}},
\end{align*}
where $\vec{Y}_n^{(1)}$ is the unique, $z$-independent matrix such that
\begin{equation*}
    \vec{Y}_n(z;\mu) 
    z^{-n \bm{\sigma}_3}
    = \vec{I} + \vec Y_n^{(1)}(\mu) z^{-1} + O(z^{-2}),\qquad
    z\to\infty.
\end{equation*}
Let\footnote{In the general case where $[\tta, \ttb] \neq [-1,1]$, one would set $\mathfrak c = \frac{b-a}{4}$.}
\begin{equation*}
    \mathfrak c = \frac{1}{2}.
\end{equation*}
One considers
\begin{equation*}
    \vec S_n(z;\mu) = {\mathfrak c}^{-n \bm{\sigma}_3} {\vec Y}_n(z;\mu) \phi(z)^{-n \bm{\sigma}_3},
\end{equation*}
where $\phi(z) = z + (z^2 -1)^{1/2}$ and the branch of the square root is chosen such that it is analytic in $\mathbb C \setminus [-1,1]$ and positive for $z > 1$.   Then it follows that \cite{kuijlaars_mclaughlin_vanassche_vanlessen_04}
\begin{equation*}
    \vec S_n(z;\mu) = \vec I + \vec S_n^{(1)}(\mu) z^{-1}  + O(z^{-2}), \qquad z \to \infty,
\end{equation*}
where $\vec S_n^{(1)}(\mu) = O(1)$ as $n \to \infty$.   Then, we see that $\phi(z) = 2z( 1 +  O(z^{-2}))$.
From this we obtain the expressions
\begin{align*}
    \vec S_n^{(1)}(\mu) = {\mathfrak c}^{-n \bm{\sigma}_3}\vec Y_n^{(1)}(\mu){\mathfrak c}^{n \bm{\sigma}_3}, \quad 
    \vec Y_n^{(1)}(\mu) = {\mathfrak c}^{n \bm{\sigma}_3}\vec S_n^{(1)}(\mu){\mathfrak c}^{-n \bm{\sigma}_3}.
\end{align*}

We are concerned with the case where $\mu_*$ is a relative perturbation of $\mu$.  Suppose
\begin{equation}\label{eq:heps}
    \mu_*(\d{x}) = (1+\poly{h}(x))\rho(x) \d{x}, \quad |\poly{h}(x)| < 1.
\end{equation}
This encompasses our definition of $\mu_*$ in \cref{thm:backwards}.  Now, consider
\begin{equation*}
    \check{\vec Y}_n(z;\mu)
    = \mathfrak c^{-n \bm{\sigma}_3}
    \vec{Y}_n(z;\mu)
    ,\qquad
    \vec{X}_n(z;\mu,\mu_*)
    = \check{\vec{Y}}_n(z,\mu_*) \check{\vec{Y}}_n(z;\mu)^{-1}.
\end{equation*}

Then for $z \in [\ttaa,\ttbb]$,
\begin{align*}
    \vec{X}_n^+(z;\mu,\mu_*)
    &= \check{\vec{Y}}_n^+(z,\mu_*) \check{\vec{Y}}_n^+(z;\mu)^{-1}
    \\&=
    \check{\vec{Y}}_n^-(z,\mu_*)
    \begin{bmatrix}
    1 & (1+\poly{h}(z))\rho(z) \\ 0 & 1 
    \end{bmatrix}
    \check{\vec{Y}}_n^+(z;\mu)^{-1}
    \\&={
    \check{\vec{Y}}_n^-(z,\mu_*)
    \begin{bmatrix}
    1 & (1+\poly{h}(z))\rho(z) \\ 0 & 1 
    \end{bmatrix}
    \begin{bmatrix}
    1 & -\rho(z) \\ 0 & 1 
    \end{bmatrix}
    \check{\vec{Y}}_n^-(z;\mu)^{-1}}
    \\&=
    \check{\vec{Y}}_n^-(z,\mu_*)
    \begin{bmatrix}
    1 & \poly{h}(z)\rho(z) \\ 0 & 1 
    \end{bmatrix}
    \check{\vec{Y}}_n^-(z;\mu)^{-1}
    \\&={\vec{X}_n^-(z;\mu,\mu_*) \check{\vec{Y}}_n^-(z;\mu)\begin{bmatrix}
    1 & \poly{h}(z)\rho(z) \\ 0 & 1 
    \end{bmatrix}
    \check{\vec{Y}}_n^-(z;\mu)^{-1}}
    \\&=\vec{X}_n^-(z;\mu,\mu_*)
    \left[ \vec{I} + h(z)\rho(z)
    \check{\vec{Y}}_n^-(z;\mu) \begin{bmatrix}
    0 & 1 \\ 0 & 0
    \end{bmatrix}
    \check{\vec{Y}}_n^-(z;\mu)^{-1}
    \right].
\end{align*}

Define
\begin{align}
    \label{eqn:Mk_def}
    \vec{M}_n(z;\mu)
    &:=\check{\vec{Y}}_n^-(z;\mu) \begin{bmatrix}
    0 & 1 \\ 0 & 0
    \end{bmatrix}
    \check{\vec{Y}}_n^-(z;\mu)^{-1}
    \\\nonumber&=
    \begin{bmatrix}
    0 & {\mathfrak c}^{-n} \ppi_n(z;\mu) \\ 
    0 & {\mathfrak c}^{n} \gamma_{n-1}{(\mu)} \ppi_{n-1}(z;\mu)
    \end{bmatrix}
    \begin{bmatrix}
    {\mathfrak c}^{n} \gamma_{n-1}{(\mu)} c_{n-1}^-(z;\mu) & -{\mathfrak c}^{-n} c_n^-(z;\mu) \\ 
    -{\mathfrak c}^{n} \gamma_{n-1}{(\mu)} \ppi_{n-1}(z;\mu) & {\mathfrak c}^{-n} \ppi_n(z;\mu) \\ 
    \end{bmatrix}
    \\&=
    \begin{bmatrix}
    -\gamma_n{(\mu)}\ppi_n(z;\mu) \ppi_{n-1}(z;\mu) & {\mathfrak c}^{-2n} \ppi_n(z;\mu)^2 \\
    -{\mathfrak c}^{2n} \gamma_{n-1}^2{(\mu)} \ppi_{n-1}(z;\mu)^2 & \gamma_{n-1}{(\mu)} \ppi_{n-1}(z;\mu) \ppi_n(z;\mu)
    \end{bmatrix}.\nonumber
\end{align}

For a piecewise-smooth contour $\Gamma \subset \mathbb C$, and matrix-valued functions $\vec X: \Gamma \to \mathbb C^{2\times 2}$ we use the norm
\begin{align*}
    \|\vec X\|_{L^2(\Gamma)} := \left( \int_{\Gamma} \|\vec X(x)\|_{\F}^2 \,|\d x| \right)^{1/2},
\end{align*}
where $\|\cdot\|_{\F}$ denotes the Frobenius (Hilbert--Schmidt) norm on $\mathbb C^{2 \times 2}$ and $L^2(\Gamma)$ is used to denote the space of measurable functions such that this norm is finite. 
Define the Cauchy operator
\begin{equation*}
\mathcal C^- u(z) = \lim_{\epsilon \downarrow 0} \mathcal C u( z - \ii \epsilon), \quad \mathcal C u(z) = \frac{1}{2 \pi \ii} \int_{-1}^1 \frac{u(z')}{z'-z} \,\d z'.
\end{equation*}
On $L^2([\ttaa,\ttbb])$, $\mathcal C^-$ is bounded with norm one \cite{Bottcher1997}.

\begin{lemma}\label[lemma]{l:rhp}
Suppose \cref{asm:mu_sqrt} holds and that $h$ is as in \eqref{eq:heps}. If
$$\Delta(n) = \Delta(n;\mu, \mu_*):=\sup_{z \in [\ttaa,\ttbb]} \|\poly{h}(z) \rho(z)\vec M_n(z;\mu)\|_{\F}  < 1$$
then there exists $\vec u_n \in L^2([\ttaa,\ttbb])$ such that
\begin{equation*}
\vec X_n(z;\mu,\mu_*) = \vec I + \mathcal C\vec u_n (z)= \vec I + O\left(\frac{\Delta(n)}{1 + |z|}\right),  \quad \|\vec u_n\|_{L^2([\ttaa,\ttbb])} \leq \frac{\Delta(n)\sqrt{2}}{1 - \Delta(n)},
\end{equation*}
uniformly with respect to $z$ on closed subsets of $\mathbb C \setminus [\ttaa,\ttbb]$.
\end{lemma}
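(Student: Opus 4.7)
The plan is to recast the Riemann--Hilbert problem for $\vec{X}_n$ as a singular integral equation via the ansatz $\vec{X}_n(z) = \vec{I} + \mathcal{C}\vec{u}_n(z)$ for a $2\times 2$ matrix-valued density $\vec{u}_n \in L^2([\ttaa,\ttbb])$, with $\mathcal{C}$ acting entrywise. This ansatz automatically gives analyticity off $[\ttaa,\ttbb]$ and the normalization $\vec{X}_n(z) \to \vec{I}$ at infinity, so only the jump $\vec{X}_n^+ = \vec{X}_n^-(\vec{I} + \poly{h}\rho\vec{M}_n)$ derived just above the statement remains to be enforced. Applying the Plemelj--Sokhotski identity $\vec{X}_n^+ - \vec{X}_n^- = \vec{u}_n$ and substituting $\vec{X}_n^- = \vec{I} + \mathcal{C}^-\vec{u}_n$ yields the linear equation
\begin{equation*}
(\mathcal{I} - T)\vec{u}_n = \poly{h}\rho\vec{M}_n, \qquad (T\vec{u})(x) := (\mathcal{C}^-\vec{u})(x)\,\poly{h}(x)\rho(x)\vec{M}_n(x;\mu).
\end{equation*}

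Next I estimate $T$ as an operator on $L^2([\ttaa,\ttbb];\mathbb{C}^{2\times 2})$. By submultiplicativity of the Frobenius norm the pointwise bound $\|\poly{h}(x)\rho(x)\vec{M}_n(x;\mu)\|_{\F} \leq \Delta(n)$, combined with $\|\mathcal{C}^-\|_{L^2 \to L^2} = 1$, gives $\|T\|_{L^2 \to L^2} \leq \Delta(n) < 1$. Inverting $\mathcal{I} - T$ via the Neumann series produces a unique solution with
\begin{equation*}
\|\vec{u}_n\|_{L^2} \leq \frac{\|\poly{h}\rho\vec{M}_n\|_{L^2}}{1-\Delta(n)} \leq \frac{\sqrt{2}\,\Delta(n)}{1-\Delta(n)},
\end{equation*}
using $\int_{-1}^{1} \|\poly{h}\rho\vec{M}_n\|_{\F}^2 \,\d x \leq 2\Delta(n)^2$. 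The decay bound on closed subsets $K \subset \mathbb{C}\setminus[\ttaa,\ttbb]$ then follows from Cauchy--Schwarz on the Cauchy integral: $\|\mathcal{C}\vec{u}_n(z)\|_{\F} \leq (2\pi)^{-1}\|1/(z'-z)\|_{L^2([\ttaa,\ttbb])}\|\vec{u}_n\|_{L^2}$, and a direct computation shows that $\|1/(z'-z)\|_{L^2([\ttaa,\ttbb])} = O(1/(1+|z|))$ uniformly for $z \in K$.

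The main subtlety will be confirming that the $\vec{X}_n$ produced by this construction coincides with the original $\check{\vec{Y}}_n(\cdot\,;\mu_*)\check{\vec{Y}}_n(\cdot\,;\mu)^{-1}$. This is a uniqueness question for the underlying Riemann--Hilbert problem: it follows from $\det\vec{X}_n \equiv 1$ (since both $\check{\vec{Y}}_n$ factors have unit determinant) together with the observation that the jump $\vec{I} + \poly{h}\rho\vec{M}_n$ reduces to $\vec{I}$ at the endpoints $\pm 1$ where $\rho$ vanishes, ruling out non-integrable endpoint singularities and permitting the usual Liouville argument. Beyond this, the argument is routine small-norm Riemann--Hilbert theory, with the real creative input already embedded in the explicit form of $\vec{M}_n$ and its role in defining the small parameter $\Delta(n)$.
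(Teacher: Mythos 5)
Your proof is correct and follows essentially the same small-norm Riemann--Hilbert argument as the paper: derive the singular integral equation $(\mathcal{I} - T)\vec{u}_n = h\rho\vec{M}_n$ from the jump relation via Plemelj--Sokhotski, invert by a Neumann series under $\Delta(n) < 1$, and read off the $L^2$ and pointwise decay bounds. The only organizational difference is that the paper begins by invoking Hardy-space theory to represent the already-defined $\vec{X}_n - \vec{I}$ as a Cauchy transform (so the identification of the solution with $\vec{X}_n$ is automatic and no separate uniqueness discussion is needed), whereas you construct a solution to the integral equation and then argue a posteriori that it coincides with $\check{\vec{Y}}_n(\cdot;\mu_*)\check{\vec{Y}}_n(\cdot;\mu)^{-1}$; both routes are standard and yield the same estimates.
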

\begin{proof}
Due to the analyticity of $\vec X_n$, it follows from the theory of Hardy spaces \cite{Duren} that there exists $\vec u_n \in L^2(\operatorname{supp}(\mu))$ such that $ \vec X_n(z;\mu,\mu_*) = \vec I + \mathcal C\vec u_n (z)$.  Then the Plemelj-Sokhotski lemma gives
that $\vec u_n(z) = \vec X_n^+(z;\mu,\mu_*) - \vec X_n^-(z;\mu,\mu_*)$.  This is a solution of the singular integral equation
\begin{equation*}
\vec u_n - (\mathcal C^- \vec u_n) \poly{h} \rho\vec M_n = \poly{h} \rho\vec M_n,
\end{equation*}
where the operator
\begin{align}\label{eq:sie}
\vec u \mapsto \vec u - (\mathcal C^- \vec u) \poly{h} \rho\vec M_n,
\end{align}
is near identity if $\Delta(n) < 1$.  Thus $\vec u_n$ is the unique solution of this integral equation and
\begin{equation*}
\|\vec u_n\|_{L^2([\ttaa,\ttbb])} \leq \frac{\Delta(n)\sqrt{2}}{1 - \Delta(n)}.
\end{equation*}    
The claim then follows from the expression
\begin{equation*}
\vec X_n(z;\mu,\mu_*) = \vec I + \mathcal C \vec u_n(z).
\qedhere
\end{equation*}
\end{proof}

This lemma is useful when \cref{thm:backwards} applies.  And, as we do in the following section, it can be used to compare both $\vec T_k$ and $\overline{\vec T}_k$ to a limiting three-term recurrence relation satisfied by limiting measure $\mu_\infty$.  But \cref{thm:backwards} with $\epslan = 0$  and $ \mu = \mu_\infty$ gives
\begin{align}\label{eq:hinf}
    \|h_\infty\|_{[\ttaa,\ttbb]} \leq 2D\sigma \opMax{k}(\mu_\infty;[\ttaa,\ttbb])k \momDel{k}(\muN,\mu_\infty;\mu_\infty),
\end{align}
where $h_\infty$ is such that $(1 + h_\infty) \mu_\infty$ has its first $2k-1$ moments match with $\mu_N$.  And this allows one to estimate the behavior of the orthogonal polynomials with repsect to $\mu_N$.  In the context of \cref{l:rhp}, recall that, for simplicity, that $[a,b] = [-1,1]$ and that $\Gamma$ is a contour that encircles $[-1,1]$ that is a distance at least $\nu$ from $[-1,1]$.  Then if $\vec u_n(s)_{ij}$ denotes the $(i,j)$ entry of $\vec u_n(s)$,
\begin{align*}
\|\vec u_n(\cdot)_{ij}\|_{L^2([-1,1])} \leq \sqrt{2} \frac{\Delta(n,\mu_\infty,\mu_N)}{1 - \Delta(n,\mu_\infty,\mu_N)} \Rightarrow \|\mathcal C \vec u_n(\cdot)_{ij}\|_{L^\infty(\Gamma)} \leq \sqrt{2} \frac{\Delta(n,\mu_\infty,\mu_N)}{1 - \Delta(n,\mu_\infty,\mu_N)} \nu^{-1} |\Gamma|^{1/2},
\end{align*}
and therefore
\begin{align*}
    \|\vec X_n(\cdot;\mu_\infty,\mu_N)\|_{L^\infty(\Gamma)} \leq \sqrt{2} + \| \mathcal C \vec u_n \|_{L^\infty(\Gamma)} \leq \sqrt{2} + \frac{\sqrt{8 |\Gamma|}}{\nu} \frac{\Delta(n,\mu_\infty,\mu_N)}{1 - \Delta(n,\mu_\infty,\mu_N)}.
\end{align*}
And since $\det \vec X_N = 1$, the same estimate holds for $\vec X_N^{-1}$.  Now, consider \cref{thm:backwards} with $\mu = \mu_N$ and suppose that $\|h\|_{\Lambda(A)} \leq 1$.  We wish to again use the Fokas--Its--Kitaev Riemann--Hilbert problem, but now it must be modified to handle discrete weights.  The reformulation we use can be found in \cite{ding_trogdon_21}, and it involves a jump condition on a contour that encircles the support (with counter-clockwise orientation) of the measures under consideration.  Instead of the jump matrix being of the form
\begin{align*}
\begin{bmatrix} 1 & \rho(z) \\ 0 & 1 \end{bmatrix} 
\end{align*}
for $\mu(\d x) = \rho(x) \d x$ it becomes, for $z \in \Gamma$,
\begin{align*}
\begin{bmatrix} 1 & - \frac{1}{2\pi \ii} \int_{-1}^1 \frac{\mu(\d x)}{x-z} \\ 0 & 1 \end{bmatrix}.
\end{align*}

Define
\begin{align*}
    r_N(z) = \frac{1}{2 \pi \ii} \sum_{n=1}^{N} h(\lambda_n) \frac{( \vec{b}^\T \vec{u}_n )^2}{\lambda_n - z},
\end{align*}
and we note that $\mathfrak c = 1/2$.  If $\mu_N$ is the VESD for $(\vec A, \vec b)$ and $\mu_*$ is the VESD for $(\vec A, \vec b_*)$ as in \cref{rem:backwards_finite} we find
\begin{align*}
    \vec X_n^+(z;\mu_N,\mu_*) = \vec X_n^-(z;\mu_N,\mu_*) \left[ \vec{I} + r_N(z)
    \vec M_n(z;\mu_N) \right], \quad z \in \Gamma,
\end{align*}
where the $^\pm$ superscripts denote the limit to $\Gamma$ from the interior/exterior of $\Gamma$.  Then, we can write
\begin{align*}
\check{\vec{Y}}_n^-(z;\mu_N) = \vec X_n(z;\mu_\infty,\mu_N) \check{\vec{Y}}_n(z;\mu_\infty).
\end{align*}
Therefore
\begin{align*}
    \vec X_n^+(z;\mu_N,\mu_*) = \vec X_n^-(z;\mu_N,\mu_*) \left[ \vec{I} + r_N(z)
    \underbrace{\vec X_n(z;\mu_\infty,\mu_N) \vec M_n(z;\mu_\infty)\vec X_n(z;\mu_\infty,\mu_N)^{-1}}_{\vec M_n(z;\mu_N)}
    \right].
\end{align*}

We then use \cref{thm:backwards} to estimate $r_N$ on $\Gamma$ by
\begin{align*}
    \|r_N\|_{L^\infty(\Gamma)} \leq \frac{1}{2 \pi \nu} \|h\|_{\Lambda(A)} \leq \frac{k\opMax{k}(\mu;[-1,1])}{ \pi \nu}  (\momDel{k}(\mu_*,\muN;\muN)+\momDel{k}(\muN,\muN;\muN)).
    \end{align*}
And here $h$ is such that $\mu_* = (1 + h) \mu_N$ has the same moments as $\overline{\mu}_k$, the finite precision Lanczos measure for $n = 1,2,\ldots,2k-1.$

\begin{assumption}\label[assumption]{asm:rhoMn_bd}
Suppose \cref{asm:mu_sqrt} holds, let $\hat \mu_\infty$ be the push-forward measure of $\mu_\infty$ under $x \mapsto \frac{2}{b-a} \left( x - \frac{b + a}{2} \right)$, and let $\hat{\rho}$ be the corresponding density.
Suppose further that
\begin{enumerate}[label=(\alph*)]
    \item there exist $E,\delta, K > 0$ such that for all $k>K$, 
    $\max_{n\leq k+1}
\sup_{z\in[\tta,\ttb]} \| \hat{\rho}(z) \vec{M}_n(z;\hat{\mu}_\infty) \|_{\F} \leq E k^\delta$, and
\item $\Gamma$ is chosen such that there exist $E',\delta', K' >0$ such that for all $k>K'$, $$\max_{n\leq k+1}
\sup_{z\in \Gamma} \| \vec{M}_n(z;\hat{\mu}_\infty) \|_{\F} \leq E' k^{\delta'}, \quad \nu \geq \frac{1}{k^2}.$$
\end{enumerate}
\end{assumption}

\begin{remark}
    In this assumption, we want to be able to consider $\nu = O(k^{-2})$ because that is the scale on which orthogonal polynomials grow polynomially, see \cref{thm:poly_bd}.
\end{remark}

\begin{lemma}\label[lemma]{l:forward_main}
Fix $\mu_\infty$ with support $[-1,1]$ and constants $L,\alpha,E,E',\delta,\delta'$.
Suppose \cref{asm:nearby_regular} (with parameters $L,\gamma=3/2,\alpha$) holds for $\mu_N$ and $\mu_\infty$ and that $\mu = \mu_\infty$ satisfies \cref{asm:rhoMn_bd} (with parameters $E,E',\delta,\delta'$). 
    Suppose further that
    \[
    k = o(N^{2\alpha/(8+\delta)})
    ,\qquad
    \epslan = o(k^{-(9 + \delta')})
    ,\qquad N\to\infty.
    \]
    Then, for $\mu_*$ as in \cref{eq:heps},
    \begin{align*}
        \vec X_n(z;\mu_N,\mu_*) = \vec I + O\left( \frac{\epslan k^{8+\delta'}}{1 + |z|}\right)
        ,\qquad N\to\infty,
    \end{align*}
    uniformly on sets bounded away from $\Gamma$.
\end{lemma}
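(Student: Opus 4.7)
The plan is to apply a discrete-measure version of \cref{l:rhp} to the pair $(\mu_N,\mu_*)$ on the encircling contour $\Gamma$ described in the text preceding the lemma; the essential task is to verify that $\Delta(n;\mu_N,\mu_*) := \sup_{z\in\Gamma}\|r_N(z)\vec{M}_n(z;\mu_N)\|_{\F}$ is strictly less than one, and in fact $o(1)$ at the stated rate. Since the hypotheses furnish no direct information about $\vec{M}_n(\cdot;\mu_N)$ on $\Gamma$, each ingredient will be bootstrapped from the hypotheses on $\mu_\infty$ via the intermediate comparison matrix $\vec{X}_n(z;\mu_\infty,\mu_N)$.

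The first step is to bound $\vec{X}_n(\cdot;\mu_\infty,\mu_N)$ on $\Gamma$. Applying \cref{thm:backwards} in the $\epslan = 0$ regime with reference measure $\mu_\infty$ realises $\mu_N$ as a moment-matching relative perturbation of $\mu_\infty$, with perturbation $h_\infty$ controlled by \eqref{eq:hinf}. Combining this with the bounds $\opMax{k}(\mu_\infty;[-1,1]) \lesssim k^{3/2}$ and $\momDel{k}(\mu_N,\mu_\infty;\mu_\infty) \leq c$ from \cref{thm:regularity_bounds_fwd} and with \cref{asm:rhoMn_bd}(a) yields an estimate of the form
\[
\Delta(n;\mu_\infty,\mu_N) \;\lesssim\; k^{\delta}\, \|h_\infty\|_{[-1,1]} \;\lesssim\; c\, k^{5/2+\delta}.
\]
Balancing the free constant $c$ against the constraint on $k$ from \cref{thm:regularity_bounds_fwd} keeps $\Delta(n;\mu_\infty,\mu_N) < 1/2$ under the stated scaling $k = o(N^{2\alpha/(8+\delta)})$, after which \cref{l:rhp} together with the contour estimate derived in the paragraph just above this lemma (which requires $\nu \geq k^{-2}$) gives $\|\vec{X}_n(\cdot;\mu_\infty,\mu_N)\|_{L^\infty(\Gamma)} \lesssim 1$. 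The identity $\det \vec{X}_n \equiv 1$ passes the same bound to its inverse.

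The second step is to transfer this to $\vec{M}_n(\cdot;\mu_N)$ via the conjugation identity
\[
\vec{M}_n(z;\mu_N) \;=\; \vec{X}_n(z;\mu_\infty,\mu_N)\, \vec{M}_n(z;\mu_\infty)\, \vec{X}_n(z;\mu_\infty,\mu_N)^{-1},
\]
which, combined with \cref{asm:rhoMn_bd}(b), yields $\|\vec{M}_n(\cdot;\mu_N)\|_{L^\infty(\Gamma)} \lesssim k^{\delta'}$. The third step bounds $r_N$ on $\Gamma$ by applying \cref{thm:backwards} with $\mu = \mu_N$: the term $\momDel{k}(\muN,\muN;\muN)$ vanishes, \cref{thm:regularity_bounds} supplies $\opMax{k}(\mu_N;[-1,1]) \lesssim k^{3/2}$, and \cref{thm:backwards:h_bd} then gives $\|\poly{h}\|_{\Lambda(\vec A)} \lesssim k^{7}\epslan$. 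Dividing by $2\pi\nu$ and using $\nu \geq k^{-2}$ yields $\|r_N\|_{L^\infty(\Gamma)} \lesssim k^{9}\epslan$, and combining with step two produces $\Delta(n;\mu_N,\mu_*) \lesssim \epslan\, k^{9+\delta'}$, which is $o(1)$ under $\epslan = o(k^{-(9+\delta')})$. Invoking the discrete-measure analogue of \cref{l:rhp} on $\Gamma$ (with the modified jump condition sketched in the text, cf.\ \cite{ding_trogdon_21}) then delivers the stated asymptotic for $\vec{X}_n(z;\mu_N,\mu_*)$ uniformly on closed subsets disjoint from $\Gamma$.

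The main obstacle is step one. The competing polynomial factors in $k$ — the orthogonal-polynomial growth from \cref{thm:regularity_bounds_fwd}, the best achievable rate for $\momDel{k}(\mu_N,\mu_\infty;\mu_\infty)$, the extra factor of $k$ in \eqref{eq:hinf}, and the growth exponent $\delta$ permitted by \cref{asm:rhoMn_bd}(a) — must be simultaneously tamed to keep $\Delta(n;\mu_\infty,\mu_N)$ strictly below one. It is precisely this balancing act that pins down the scaling $k = o(N^{2\alpha/(8+\delta)})$ appearing in the hypothesis; once $\vec{X}_n(\cdot;\mu_\infty,\mu_N)$ is uniformly controlled, the remaining steps are essentially mechanical applications of the Riemann--Hilbert machinery already in place.
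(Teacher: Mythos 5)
Your proposal follows the paper's proof essentially step for step: the same two-stage Riemann--Hilbert comparison ($\mu_\infty \to \mu_N$ via \cref{thm:backwards} with $\epslan = 0$, then $\mu_N \to \mu_*$ via \cref{thm:backwards} with $\mu=\mu_N$), the same conjugation identity to transfer bounds on $\vec M_n(\cdot;\mu_\infty)$ to $\vec M_n(\cdot;\mu_N)$, the same use of \cref{thm:regularity_bounds,thm:regularity_bounds_fwd} and \cref{asm:rhoMn_bd}, and the same balancing to pin down $\beta = \alpha(9+2\delta)/(16+2\delta)$. One small imprecision: you assert that $\Delta(n;\mu_\infty,\mu_N) < 1/2$ together with $\nu \geq k^{-2}$ yields $\|\vec X_n(\cdot;\mu_\infty,\mu_N)\|_{L^\infty(\Gamma)} \lesssim 1$, but the contour estimate $\|\vec X_n\|_{L^\infty(\Gamma)} \leq \sqrt 2 + \tfrac{\sqrt{8|\Gamma|}}{\nu}\tfrac{\Delta}{1-\Delta}$ actually needs the stronger requirement $\Delta = o(\nu) = o(k^{-2})$ to give an $O(1)$ bound — which is precisely what the paper verifies via $k = o(N^{\beta/(3+\gamma+\delta)})$, and what your choice of $\beta$ implicitly delivers, so the exponents come out right even though the stated threshold is looser than what is actually used.
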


\begin{proof}
We point out that \cref{asm:mu_sqrt} forces $\gamma \geq 3/2$ and $\gamma=3/2$ is always possible.  

Recall that $\tta = -1$ and $\ttb = 1$.  If we set $c = N^{-\beta}$ and assume
\begin{equation}
\label{eqn:Xn_kcond}
    k \leq \bigg( \frac{1}{8}\bigg)^{\gamma/ (4+2\gamma)} \bigg( \frac{\sqrt{L} N^{\alpha-\beta}}{32} \bigg)^{1/(2+\gamma)},
\end{equation}
then \cref{thm:regularity_bounds_fwd} gives the bounds
\begin{equation*}    
    \opMax{k}(\mu_\infty;[-1,1])
    \leq \frac{2}{\sqrt{L}}  8^{\gamma/2} k^{\gamma}
    ,\qquad
    \momDel{k}(\muN,\mu_\infty;\mu_\infty) \leq N^{-\beta}.
\end{equation*}
With $\epslan = 0$, $\mu = \mu_\infty$, \Cref{thm:backwards} then implies
\begin{equation*}
    \| h_\infty \|_{[-1,1]} \leq  2 k \opMax{k}(\mu_\infty;[-1,1]) \momDel{k}(\muN,\mu_\infty;\mu_\infty).
\end{equation*}
Here $\sigma \leq 2$ since $\| \vec{A}_N \| \leq 2$ by \cref{asm:nearby_regular}.
Using \cref{asm:rhoMn_bd} we find a bound
\begin{equation}\label{eqn:foward_Delta}
\Delta(k; {\mu}_\infty, {\mu}_N) 
= 
\sup_{z\in [-1,1]} |{h}_\infty(z)| \|  {\rho}(z)\vec M_k(z;{\mu}_\infty)\|_{\infty} 
= O( k^{1+\gamma+\delta} N^{-\beta} ).
\end{equation}

Assuming $\epslan < 1/(\sigma Ck^2)$, $\mu = \mu_N$, we revisit
\Cref{thm:backwards} which then implies, 
\begin{equation*}
    \| h \|_{[-1,1]} \leq 2D\sigma\opMax{k}(\mu_N;[-1,1])^2 k^{4} \epslan.
\end{equation*}
Again $\sigma \leq 2$.  
Provided that $\Delta(n;\mu_\infty,\mu_N) < 1$, we estimate
\begin{align}
\|r_N \vec X_n(\cdot;\mu_\infty,\mu_N) \vec M_n(\cdot;\mu_\infty) \vec X_n(\cdot;\mu_\infty,\mu_N)^{-1} \|_{{L^\infty(\Gamma)}} \leq & \nonumber \\
 \left( 1 + \frac{2 \sqrt{|\Gamma|}}{\nu} \frac{\Delta(n,\mu_\infty,\mu_N)}{1 - \Delta(n,\mu_\infty,\mu_n)} \right)^2 &\frac{\|h\|_{[-1,1]}}{\pi \nu} \|\vec M_n(\cdot;\mu_\infty)\|_{L^\infty(\Gamma)}.\label{eq:r_N_est}
\end{align}
Since the Riemann--Hilbert problem has a jump condition on the contour $\Gamma$, \eqref{eq:sie} is replaced with
\begin{align*}
    \vec u \mapsto \vec u - (\mathcal C^-_{\Gamma} \vec u) r_N \vec M_n(\cdot;\mu_N).
\end{align*}
And to obtain a near-identity operator, we require that \eqref{eq:r_N_est} is small.  Here $\mathcal C_{\Gamma}$ is the Cauchy operator on the contour $\Gamma$ and it, operating on $L^2(\Gamma)$, may have a norm larger than one \cite{Bottcher1997} but we may choose $\Gamma$ so that the norm is bounded by 2.
Since $\gamma=3/2$, condition \cref{eqn:Xn_kcond} is stronger than
\[
k \leq \frac{1}{4} \left( \frac{LN^\alpha}{3} \right)^{1/(2\gamma)},
\]
which, with \cref{thm:regularity_bounds}, gives the bound
\begin{align*}
 \opMax{k}(\muN;[-1,1]) 
    \leq  \frac{4}{\sqrt{L}} 16^{\gamma/2} k^\gamma.
\end{align*}
Suppose 
\begin{equation}\label{eqn:forward_k_final}
    k = o(N^{\beta/(3 + \gamma + \delta)}), \qquad 
    \epslan = o( k^{-(\delta' + 2 \gamma + 6)}).
\end{equation}
Then, 
\[
\Delta(n,\mu_\infty,\mu_N) = \sup_{z\in [-1,1]} |{h}(z)| \|  {\rho}(z)\vec M_k(z;\muN)\|_{\infty} 
= O(k^{1+\gamma+\delta} N^{-\beta})
= o(1).
\]
The assumption on $k$ in \cref{eqn:forward_k_final} implies $k=o(N^\beta/(1+\gamma+\delta))$ so that, using \cref{eqn:foward_Delta},
\begin{align*}
    \frac{1}{\nu} \frac{\Delta(n,\mu_\infty,\mu_N)}{1 - \Delta(n,\mu_\infty,\mu_N)} 
    =  o(k^{3+\gamma+\delta} N^{-\beta}) = o(1),
\end{align*}
and therefore
\begin{align*}
    \text{\cref{eq:r_N_est}} = O(\nu^{-1} \epslan k^{\delta' + 2 \gamma + 4}) = o( \epslan k^{\delta' + 2 \gamma + 6}).
\end{align*}
We can conclude that
\begin{align*}
    \vec u_n - (\mathcal C^-_{\Gamma} \vec u_n) r_N \vec M_n(\cdot;\mu_N) = r_N \vec M_n(\cdot;\mu_N)
\end{align*}
has a unique solution which satisfies $\|\vec u_n\|_{L^2(\Gamma)} = O( \epslan k^{\delta' + 2 \gamma + 6})$ and $\vec X_n = \vec I +\mathcal C_{\Gamma} \vec u_n$. 

Recalling that $\gamma = 3/2$, to balance our constraints on $k$, we set
\begin{equation*}
    \beta = \frac{\alpha}{7} \left( \frac{16 + 2\delta}{7(9 + 2\delta)} \right)^{-1} = \alpha  \frac{ 9 + 2\delta}{16 + 2\delta}
    \quad
    \Longrightarrow 
    \quad
    \frac{\alpha - \beta}{2 + \gamma} = \frac{2 (\alpha - \beta)}{7} = \frac{2 \beta}{9 + 2\delta} = \frac{\beta}{3 + \gamma + \delta}.
\end{equation*}
Thus, both conditions relating $k$ and $N$ are satisfied $k = o(N^{2\alpha/(16+2\delta)}) = o(N^{\alpha/(8+\delta)})$.
\end{proof}

\subsection{Recurrence coefficients}

The residue at infinity of $\vec{S}_n$ can be computed by first defining $\vec{X}_n^{(1)}$ uniquely by
\begin{align*}
\vec X_n(z;\mu,\mu_*) = \vec I + \vec X_n^{(1)}(\mu,\mu_*) z^{-1} + O(z^{-2}), \quad z \to \infty,
\end{align*}
and using
\begin{align*}
    {\mathfrak c}^{n\bm{\sigma}_3} \check{\vec{Y}}_n(z;\mu_*)  z^{-n \bm{\sigma}_3}
    &= {\mathfrak c}^{n\bm{\sigma}_3} \vec X_n(z;\mu,\mu_*)\check{\vec{Y}}_n(z;\mu)  z^{-n \bm{\sigma}_3}\\
    &= {\mathfrak c}^{n\bm{\sigma}_3} \vec X_n(z;\mu,\mu_*){\mathfrak c}^{-n\bm{\sigma}_3} {\mathfrak c}^{n\bm{\sigma}_3}\check{\vec{Y}}_n(z;\mu)  z^{-n \bm{\sigma}_3}\\
    & =  \bigg( \vec I + {\mathfrak c}^{n\bm{\sigma}_3}\vec X_n^{(1)}(\mu,\mu_*){\mathfrak c}^{-n\bm{\sigma}_3} z^{-1} + O(z^{-2}) \bigg) 
    \\&\hspace{4em}\bigg( \vec I + z^{-1}\left[ {\mathfrak c}^{n \bm{\sigma}_3}\vec S_n^{(1)}(\mu){\mathfrak c}^{-n \bm{\sigma}_3} - \frac{\ttb + \tta }{2}n \bm{\sigma}_3\right] + O(z^{-2}) \bigg),
    \end{align*}
    and therefore
    \begin{equation*}
    \vec{Y}^{(1)}_n(\mu_*)  = {\mathfrak c}^{n\bm{\sigma}_3}\left( \vec X_n^{(1)}(\mu,\mu_*) + \vec S_n^{(1)}(\mu) - \frac{\ttb + \tta }{2}n \bm{\sigma}_3 \right){\mathfrak c}^{-n\bm{\sigma}_3}.
\end{equation*}
Then, we may express the recurrence coefficients for $\mu_*$ in terms of those for $\mu$, and $\vec X_n^{(1)}$, via
\begin{align*}
    \alpha_n(\mu_*) &= 
    \alpha_n(\mu) + [\vec{X}_n^{(1)}(\mu,\mu_*)]_{1,1} - [\vec{X}_{n+1}^{(1)}(\mu,\mu_*)]_{1,1},
    \\
    \beta_n(\mu_*)^2 &= {[\vec{X}_n^{(1)}(\mu,\mu_*) + \vec S_n^{(1)}(\mu) ]_{1,2}[\vec{X}_n^{(1)}(\mu,\mu_*) + \vec S_n^{(1)}(\mu)]_{2,1}} \\
    &= \beta_n(\mu)^2 + [\vec{X}_n^{(1)}(\mu,\mu_*)]_{1,2} [\vec S_n^{(1)}(\mu) ]_{2,1} + [\vec{X}_n^{(1)}(\mu,\mu_*)]_{2,1} [\vec S_n^{(1)}(\mu) ]_{1,2} 
    \\&\hspace{11em}+ [\vec{X}_n^{(1)}(\mu,\mu_*)]_{1,2} [\vec X_n^{(1)}(\mu,\mu_*) ]_{2,1}.
\end{align*}

To obtain the optimal scaling of quantities with respect to $\mathrm{supp}(\mu) = [a,b]$, it is convenient to rescale first so that the support of $\mu$ is transformed to $[-1,1]$, and then undo the scaling after estimates are obtained. 
 This gives a perturbation result.
\begin{corollary}\label[corollary]{thm:Tk_perturbation}
    Given the assumptions of \cref{l:rhp}, let $\hat \mu$ and $\hat \mu_*$ be the push-forward measures of $\mu$ and $\mu_*$ under $x \mapsto \frac{2}{b-a} \left( x - \frac{b + a}{2} \right)$, respectively.  Suppose that $\Delta(n)=\Delta(n;\hat \mu, \hat \mu_*)$ satisfies $\Delta(n), \Delta(n+1) \leq 1/2$. Then
    \begin{align*}
        |\alpha_n(\mu_*) - \alpha_n(\mu)| &\leq \pi^{-1}(\Delta(n) + \Delta(n+1)) (b-a),\\
        |\beta_n(\mu_*)^2 - \beta_n(\mu)^2| &\leq 2\pi^{-1}\left[\max_n \|\vec S_n^{(1)}(\hat \mu)\| \right] \Delta(n) (b-a)^2 + \pi^{-2} \Delta(n)^2 (b-a)^2.
    \end{align*}

\end{corollary}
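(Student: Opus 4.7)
The strategy is to reduce to the case $\operatorname{supp}(\mu)=[-1,1]$ and then apply the bounds of \cref{l:rhp} entry-by-entry to the Riemann--Hilbert formulas derived just above the statement of the corollary. Let $t(x) = \frac{2}{b-a}\bigl(x-\frac{b+a}{2}\bigr)$, so that $\hat\mu$ and $\hat\mu_*$ are supported in $[-1,1]$ and $\hat\mu_* = (1+\hat h)\hat\mu$ for a suitable $\hat h$. Since $\op{n}(t(x);\hat\mu) = \op{n}(x;\mu)$, a change of variables in the three-term recurrence gives the affine scaling
\begin{equation*}
\alpha_n(\mu) = \tfrac{b-a}{2}\alpha_n(\hat\mu) + \tfrac{b+a}{2}, \qquad \beta_n(\mu)^2 = \bigl(\tfrac{b-a}{2}\bigr)^2\beta_n(\hat\mu)^2,
\end{equation*}
with identical identities for $\mu_*$. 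Subtracting, it suffices to prove the two stated bounds in the normalized setting, in which $\mathfrak c = 1/2$ and $\tta+\ttb = 0$, and then multiply by $\frac{b-a}{2}$ and $\bigl(\frac{b-a}{2}\bigr)^2$, respectively. Note that the corollary's $\Delta(n)$ is precisely $\Delta(n;\hat\mu,\hat\mu_*)$ as defined in \cref{l:rhp}.

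In the normalized setting, the identities just above the corollary read
\begin{equation*}
\alpha_n(\hat\mu_*) - \alpha_n(\hat\mu) = [\vec X_n^{(1)}(\hat\mu,\hat\mu_*)]_{1,1} - [\vec X_{n+1}^{(1)}(\hat\mu,\hat\mu_*)]_{1,1},
\end{equation*}
\begin{equation*}
\beta_n(\hat\mu_*)^2 - \beta_n(\hat\mu)^2 = [\vec X_n^{(1)}]_{1,2}[\vec S_n^{(1)}(\hat\mu)]_{2,1} + [\vec X_n^{(1)}]_{2,1}[\vec S_n^{(1)}(\hat\mu)]_{1,2} + [\vec X_n^{(1)}]_{1,2}[\vec X_n^{(1)}]_{2,1},
\end{equation*}
so everything reduces to an entrywise bound on $\vec X_n^{(1)}(\hat\mu,\hat\mu_*)$. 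By \cref{l:rhp}, $\vec X_n(z;\hat\mu,\hat\mu_*) = \vec I + \mathcal C\vec u_n(z)$ with $\|\vec u_n\|_{L^2([-1,1])} \leq \sqrt 2\,\Delta(n)/(1-\Delta(n))$. Matching the $z^{-1}$ coefficient at infinity gives $\vec X_n^{(1)}(\hat\mu,\hat\mu_*) = -\frac{1}{2\pi\ii}\int_{-1}^1 \vec u_n(z)\,\d z$, and Cauchy--Schwarz applied entrywise (using $\|1\|_{L^2([-1,1])} = \sqrt 2$ and $\|[\vec u_n]_{ij}\|_{L^2}\leq \|\vec u_n\|_{L^2}$), together with $\Delta(n)\leq 1/2$, yields
\begin{equation*}
\bigl|[\vec X_n^{(1)}(\hat\mu,\hat\mu_*)]_{ij}\bigr| \leq \frac{\sqrt 2}{2\pi}\|\vec u_n\|_{L^2([-1,1])} \leq \frac{\Delta(n)}{\pi(1-\Delta(n))} \leq \frac{2\Delta(n)}{\pi}.
\end{equation*}

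Substituting this entry bound into the two displayed identities, applying the triangle inequality to the three terms on the right-hand side of the $\beta_n^2$ identity, using the definition of $\max_n\|\vec S_n^{(1)}(\hat\mu)\|$, and finally rescaling by $\frac{b-a}{2}$ and $\bigl(\frac{b-a}{2}\bigr)^2$ gives the claimed bounds (up to the precise numerical constant on the cross-term of the $\beta_n^2$ inequality). No essential difficulty arises; once \cref{l:rhp} supplies the $L^2$ control on $\vec u_n$, the rest is bookkeeping. The only point worth noting is that the conjugation factors $\mathfrak c^{\pm n\bm{\sigma}_3}$ relating $\vec Y_n^{(1)}$ to $\vec S_n^{(1)}$ do \emph{not} appear in the final bound: the $(1,1)$ entry is invariant under conjugation by a diagonal matrix, and in the product $[\vec Y_n^{(1)}]_{1,2}[\vec Y_n^{(1)}]_{2,1}$ the factors $\mathfrak c^{2n}$ and $\mathfrak c^{-2n}$ precisely cancel, so all scale dependence is absorbed into the single rescaling at the end.
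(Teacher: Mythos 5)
Your proof is correct and takes essentially the same route as the paper: reduce to the normalized interval $[-1,1]$ via the affine map, extract $\vec X_n^{(1)}(\hat\mu,\hat\mu_*) = -\frac{1}{2\pi\ii}\int_{-1}^1 \vec u_n(s)\,\d s$ from \cref{l:rhp}, bound each entry by Cauchy--Schwarz to get $\|\vec X_n^{(1)}\|_{\max}\le \pi^{-1}\Delta(n)/(1-\Delta(n))$, feed this into the recurrence-coefficient formulas in terms of $\vec X_n^{(1)}$ and $\vec S_n^{(1)}$, and rescale by $(b-a)/2$. Your side remark that the $\mathfrak c^{\pm n\bm\sigma_3}$ conjugations are harmless (diagonal entries invariant; $\mathfrak c^{2n}$ and $\mathfrak c^{-2n}$ cancel in the $(1,2)\!\cdot\!(2,1)$ product) is a correct and useful observation that the paper leaves implicit. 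Your caveat about the numerical constant on the $\beta_n^2$ cross-term is warranted: carrying through the entrywise bound with $\Delta(n)\le 1/2$ and rescaling yields a coefficient of $\pi^{-1}$ rather than the $2\pi^{-1}$ stated in the corollary, so the paper's constant appears to be generous by a factor of two; your derivation is consistent with (indeed slightly tighter than) what is claimed.
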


\begin{proof}
   \cref{l:rhp} applies. 
    We work with the expression
    \begin{align*}
        \vec X_n^{(1)}(\hat \mu,\hat \mu_*) = - \frac{1}{2 \pi \ii}\int_{-1}^1 \vec u_n(s) \,\d s.
    \end{align*}
    Then if $\vec u_n(s)_{ij}$ denotes the $(i,j)$ entry of $\vec u_n(s)$,
    \begin{align}
        \|\vec u_n(\cdot)_{ij}\|_{L^1([-1,1])} \leq \sqrt{2} \|\vec u_n(\cdot)_{ij}\|_{L^2([-1,1])},
    \end{align}
    and therefore
    \begin{align*}
        \|\vec X_n^{(1)}(\hat \mu, \hat \mu_*)\|_{\max} \leq \frac{1}{\pi} \frac{\Delta(n)}{1 - \Delta(n)}.
    \end{align*}
    The same estimates hold with $n$ replaced with $n +1$.  The claim follows by recalling that $\max_n \|\vec S_n^{(1)}(\hat \mu)\| < \infty$ and that
    \begin{align*}
        \alpha_n(\mu) &= \frac{b-a}{2} \alpha_n(\hat \mu) + \frac{b+a}{2}, & \alpha_n(\mu_*) &= \frac{b-a}{2} \alpha_n(\hat \mu_*) + \frac{b+a}{2},\\
        \beta_n(\mu) &= \frac{b-a}{2} \beta_n(\hat \mu), & \beta_n(\mu_*) &= \frac{b-a}{2} \beta_n(\hat \mu_*). \qedhere
    \end{align*}
\end{proof}

We now establish our forward stability result.
\begin{theorem}[Forward stability]\label{thm:forwards}   
    Suppose $\mathrm{supp}(\mu_\infty) = [a,b]$ and let
    \begin{align*}
    \Gamma = \{ z \in \mathbb C: 1 + k^{-1} = |z + \sqrt{z-1}\sqrt{z+1}| \}.
    \end{align*}
    Suppose the assumptions\footnote{Note that within the assumptions of \cref{l:forward_main} is Assumption~\ref{asm:rhoMn_bd} which involves $\Gamma$.} of \cref{l:forward_main} hold for $\Gamma$ and $\hat \mu_\infty, \hat \mu_N$, the push-forward measures of $\mu_\infty, \mu_N$, respectively, under $x \mapsto \frac{2}{b-a} \left( x - \frac{b + a}{2} \right)$. 
    Suppose further that
    \[
    k = o(N^{\alpha/(8+\delta)})
    ,\qquad
    \epslan = o(k^{-(9 + \delta')})
    ,\qquad N\to\infty.
    \]
    Then
    \begin{align*}
        \max_{n\leq k+1}|\alpha_n(\mu_N) - \alpha_n(\overline{\mu}_k)| &= O \left( (b-a)\epslan k^{\delta' + 9}\right), \quad\text{and}
        \\
        \max_{n\leq k+1}|\beta_n(\mu_N)^2 - \beta_n(\overline{\mu}_k)^2| &= O \left( (b-a)^2\epslan k^{\delta' + 9}\right), \quad N\to\infty.
    \end{align*}
    Therefore, if $((\vec A_N, \vec b_N))_{N \geq 1}$ is a sequence of problems such that $\hat \mu_N$ ($\mu_N = \mu_{\rm{VESD}}(\vec A_N, \vec b_N)$) satisfies \cref{asm:nearby_regular},
    \begin{align*}
        \|\vec T_k - \overline{\vec T}_k\|_{\max} = O((b-a)\epslan k^{9 + \delta'}).
    \end{align*}
\end{theorem}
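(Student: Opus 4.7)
The plan is to combine the backwards stability bound of \cref{thm:backwards} (applied with reference measure $\mu = \muN$) with the discrete Riemann--Hilbert perturbation machinery developed in this section. Invoking \cref{thm:backwards} with $\mu = \muN$ produces a positive measure $\mu_* = (1+\poly{h})\muN$ whose first $2k-1$ moments agree with those of $\mukfp$; since the Jacobi matrix of size $k$ is determined by the first $2k-1$ moments, one has $\alpha_n(\mu_*) = \alpha_n(\mukfp)$ and $\beta_n(\mu_*) = \beta_n(\mukfp)$ for $n \leq k$. The task therefore reduces to comparing the recurrence coefficients of $\muN$ with those of its finite-support perturbation $\mu_*$, which, after pushing forward to $[-1,1]$, is exactly the discrete Riemann--Hilbert setup introduced just before \cref{l:forward_main}.

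Next I would estimate the jump $r_N \vec M_n(\cdot\,;\muN)$ on the contour $\Gamma$. For $r_N$, the elementary bound $\|r_N\|_{L^\infty(\Gamma)} \leq \|\poly{h}\|_{\Lambda(\vec A)}/(2\pi\nu)$ combines with the second conclusion of \cref{thm:backwards} (using $\momDel{k}(\muN,\muN;\muN) = 0$) and the bound $\opMax{k}(\muN;[-1,1]) = O(k^{3/2})$ from \cref{thm:regularity_bounds} (with $\gamma = 3/2$, as forced by \cref{asm:mu_sqrt}) to give $\|r_N\|_{L^\infty(\Gamma)} = O(k^9 \epslan)$ once $\nu \geq k^{-2}$. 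For the matrix factor, the conjugation identity $\vec M_n(z;\muN) = \vec X_n(z;\mu_\infty,\muN) \vec M_n(z;\mu_\infty) \vec X_n(z;\mu_\infty,\muN)^{-1}$ reduces everything to quantities controlled earlier: \cref{l:forward_main} gives $\vec X_n(\cdot\,;\mu_\infty,\muN) = \vec I + o(1)$ uniformly on $\Gamma$ (since $\Gamma$ is bounded away from $[-1,1]$), and the second part of \cref{asm:rhoMn_bd} gives $\|\vec M_n(\cdot\,;\mu_\infty)\|_{L^\infty(\Gamma)} = O(k^{\delta'})$. Combining these, $\|r_N \vec M_n(\cdot\,;\muN)\|_{L^\infty(\Gamma)} = O(k^{9+\delta'} \epslan)$, which is $o(1)$ under the hypothesis $\epslan = o(k^{-(9+\delta')})$.

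The third step is to solve the singular integral equation $\vec u_n - (\mathcal C^-_\Gamma \vec u_n)\, r_N \vec M_n(\cdot\,;\muN) = r_N \vec M_n(\cdot\,;\muN)$ by Neumann series --- $\mathcal C^-_\Gamma$ is bounded by $2$ on $L^2(\Gamma)$ for the chosen $\Gamma$ --- yielding $\|\vec u_n\|_{L^2(\Gamma)} = O(k^{9+\delta'} \epslan)$ uniformly in $n \leq k+1$. Then $\vec X_n^{(1)}(\muN,\mu_*) = -(2\pi\ii)^{-1}\int_\Gamma \vec u_n(s)\,\d s$ is of the same order in the max-norm, and the expressions for the recurrence coefficients in terms of $\vec X_n^{(1)}$ (the discrete analogue of \cref{thm:Tk_perturbation}, with the jump on $\Gamma$ rather than on $[-1,1]$) produce differences $|\alpha_n(\hat\muN) - \alpha_n(\hat\mu_*)|$ and $|\beta_n(\hat\muN)^2 - \beta_n(\hat\mu_*)^2|$ of order $k^{9+\delta'} \epslan$ on the rescaled interval $[-1,1]$. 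Undoing the affine scaling $[-1,1] \to [\tta,\ttb]$ as in the last line of \cref{thm:Tk_perturbation} introduces the factors $(b-a)$ and $(b-a)^2$, and since $\alpha_n(\mu_*) = \alpha_n(\mukfp)$ and $\beta_n(\mu_*) = \beta_n(\mukfp)$, this gives the two displayed bounds of the theorem. The conclusion $\|\vec T_k - \overline{\vec T}_k\|_{\max} = O((b-a)\epslan k^{9+\delta'})$ then follows from $\beta_n = \Theta(b-a)$ (standard under \cref{asm:mu_sqrt}), which turns the $\beta_n^2$ bound into the analogous one for $\beta_n$.

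The principal technical obstacle I anticipate is the careful bookkeeping of the powers of $k$ throughout, together with verifying that the near-identity condition for the singular integral equation (and hence the Neumann series expansion) holds \emph{uniformly} over $n \leq k+1$ rather than for a fixed $n$; in particular, this requires that the natural contour scale $\nu \asymp k^{-2}$ --- dictated by \cref{thm:poly_bd} --- remain compatible with the growth bounds imposed by \cref{asm:rhoMn_bd}, and that the $o(1)$ statement of \cref{l:forward_main} degrade slowly enough on the shrinking contour that the conjugation of $\vec M_n(\cdot\,;\mu_\infty)$ does not inflate its norm by more than a constant.
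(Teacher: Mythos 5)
Your approach matches the paper's in essence, but you re-derive the content of \cref{l:forward_main} inline rather than invoking it; the paper's actual proof simply cites \cref{l:forward_main} for the $\alpha_n$ and $\beta_n^2$ bounds and devotes nearly all its attention to the final step you dispatch in one sentence. The bookkeeping of the powers of $k$ in your middle steps is correct (you land on $k^{9+\delta'}$, matching the paper's $\delta' + 2\gamma + 6$ with $\gamma = 3/2$), and the use of $r_N$, the contour $\Gamma$ at scale $\nu \asymp k^{-2}$, and the conjugation identity $\vec M_n(\cdot;\mu_N) = \vec X_n(\cdot;\mu_\infty,\mu_N)\vec M_n(\cdot;\mu_\infty)\vec X_n(\cdot;\mu_\infty,\mu_N)^{-1}$ are all aligned with the paper's internal machinery. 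One small misattribution: the bound on $\vec X_n(\cdot;\mu_\infty,\mu_N)$ over $\Gamma$ that you need comes from \cref{l:rhp} and the $L^\infty(\Gamma)$ estimate derived immediately after it, not from \cref{l:forward_main}, which concerns $\vec X_n(\cdot;\mu_N,\mu_*)$.

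The genuine gap is in your last step. You assert ``$\beta_n = \Theta(b-a)$, standard under \cref{asm:mu_sqrt}.'' But \cref{asm:mu_sqrt} is an assumption on $\mu_\infty$, and the quantities being compared are $\beta_n(\hat{\mu}_N)$ and $\beta_n(\hat{\overline{\mu}}_k)$, which are recurrence coefficients of a \emph{discrete} VESD and a finite-precision Lanczos measure, respectively, not of $\mu_\infty$. For a single fixed $n$, the claim that $\beta_n(\hat\mu_N) \to \beta_n(\hat\mu_\infty) > 0$ is soft; but the theorem takes the maximum over $n \leq k+1$ with $k \to \infty$ and $N\to\infty$ simultaneously, and a uniform lower bound on $|\beta_n(\hat\mu_N) + \beta_n(\hat{\overline{\mu}}_k)|$ over that growing range is exactly what must be proved, not assumed. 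The paper gets this from \cref{thm:Tk_perturbation} applied to the pair $(\hat\mu_\infty, \hat\mu_N)$, combined with $\vec S_n^{(1)}(\hat\mu_N) = \vec S_n^{(1)}(\hat\mu_\infty) + \vec X_n^{(1)}(\mu_\infty,\hat\mu_N)$ and the boundedness of $\sup_n\|\vec S_n^{(1)}(\hat\mu_\infty)\|$ guaranteed by \cref{asm:mu_sqrt}; the $o(1)$ control on $\vec X_n^{(1)}(\mu_\infty,\hat\mu_N)$ from \cref{l:rhp} is what keeps $\beta_n(\hat\mu_N)$ uniformly bounded away from zero. Without this quantitative bridge, the step from $|\beta_n^2 - \overline{\beta}_n^2| = O(\cdot)$ to $|\beta_n - \overline{\beta}_n| = O(\cdot)$ is not justified.
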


\begin{proof}
    Note that
    \begin{align*}
        \alpha_n(\muN) &= \frac{b-a}{2} \alpha_n(\muNp) + \frac{b+a}{2}, & \alpha_n(\mukfp) &= \frac{b-a}{2} \alpha_n(\mukfpp) + \frac{b+a}{2},\\
        \beta_n(\muN) &= \frac{b-a}{2} \beta_n(\muNp), & \beta_n(\mukfp) &= \frac{b-a}{2} \beta_n(\mukfpp).
    \end{align*}
    The only portion of this result that does not follow immediately from \cref{l:forward_main} is the last claim because one needs to show
    \begin{align*}
        |\beta_n(\muN) - \beta_n(\mukfp)| =  (b-a)\frac{|\beta_n(\muNp)^2 - \beta_n(\mukfpp)^2|}{|\beta_n(\muNp) + \beta_n(\mukfpp)|} =   O((b-a)|\beta_n(\muNp)^2 - \beta_n(\mukfpp)^2|). 
    \end{align*}
    Let $\hat{\mu}_\infty$ be the pushforward measure of $\mu_\infty$ under the same mapping as above.
    \Cref{thm:Tk_perturbation} gives a lower bound on $|\beta_n(\muNp) + \beta_n(\mukfpp)|$ using the fact that $ \beta_n(\muNp) \to \beta_n(\hat{\mu}_\infty)$ where the positivity of $\beta_n(\hat{\mu}_\infty)$ is crucial.  The restrictions on $k$ for this fact are milder than the assumptions of the theorem.  And then $\vec S_n^{(1)}(\muNp)$ is estimated using the relation
   \begin{align*}
       \vec S_n^{(1)}(\muNp) = \vec S_n^{(1)}(\hat{\mu}_\infty) + \vec X_n^{(1)}(\mu_\infty,\muNp),
   \end{align*}
   and using \cref{l:rhp}.
\end{proof}

\begin{remark}
    Our approach yields a forwards stability result for the Lanczos algorithm in the case that $\muN$ is near to a measure $\mu$ satisfying \cref{asm:mu_sqrt}.
    This is the case in many situations, for instance, when $\muN$ is the VESD associated to many large random matrices and $\mu$ is the limiting measure.
    In particular, in the context of \cref{fig:motivating_experiment} our approach explains the observation that $\overline{\alpha}_i$ and $\overline{\beta}_i$ are near $0$ and $1/2$ respectively. 
\end{remark}

\section{Random matrices}
\label{sec:random_matrix}

A natural setting in which \cref{asm:nearby_regular} holds is when $\mu_N = \muVESD(\cdot\,;\vec A, \vec b)$ is the VESD of a random matrix and (random or deterministic) vector and $\mu_\infty$ is the limiting spectral distribution.  In what follows $\mathbb E$ and $\mathbb P$ will denote the expectation and the probability of an event with respect to a probability distribution that will be clear from context.

We first define the notion of a \emph{local law}.  This holds for a wide class of random matrices \cite{knowles_yin_17,erdos_yau_17} and we suppose the limiting measure $\mu_\infty$ has bounded support.  We discuss two classical examples of such random matrices below.

Define two $N$-dependent regions in the complex plane by
\begin{align*}
    \mathcal D(N,\tau) &= \{ z \in \mathbb C : N^{-1 + \tau} \leq \imag z \leq \tau^{-1},~~ \mathrm{dist}(\real z, \operatorname{supp}\mu_\infty) < \tau^{-1} \},\\
    \mathcal D_o(N,\tau) &= \{ z \in \mathbb C : 0 < \imag z \leq \tau^{-1},~~ N^{-2/3 +  \tau} \leq \mathrm{dist}(\real z, \operatorname{supp}\mu_\infty) < \tau^{-1}  \}.
\end{align*}
The first region is useful for estimating quantities near the interior of the support of $\mu_\infty$ and the latter is used to estimate quantities near the edges of the support of $\mu_\infty$.

\begin{definition}
Suppose $\vec A_N$ is an $N \times N$ random matrix and $\vec b_N$ is an $N$-dimensional vector.  The sequence $\left((\vec A_N, \vec b_N)\right)_{N \geq 1}$, $\|\vec b_N\| = 1$ is said to satisfy a local law with limit $\mu_\infty$ if (1) for every fixed $\tau > 0$, $\epsilon > 0$ and $D > 0$ there exists $C$ such that
\begin{align}\label{eq:locallaw}
\begin{split}
 &\sup_{z \in \mathcal D(N,\tau)} \mathbb P \left(\left| \vec b_N^\T (\vec A_N - z \vec I)^{-1} \vec b_N - \mathcal S(z;\mu_\infty)\right| \geq  N^{\epsilon} \left( \sqrt{ \frac{\imag \mathcal S(z;\mu_\infty)}{N \imag z}} + \frac{1}{N \imag z} \right) \right)\leq C N^{-D},\\
 &\sup_{z \in \mathcal D_o(N,\tau)} \mathbb P \left(\left| \vec b_N^\T (\vec A_N - z \vec I)^{-1} \vec b_N - \mathcal S(z;\mu_\infty)\right| \geq  N^{\epsilon} \sqrt{ \frac{\imag \mathcal S(z;\mu_\infty)}{N \imag z}} \right)\leq C N^{-D}
 \end{split}
\end{align}
where
\begin{equation*}
    \mathcal S(z;\mu_\infty) = \int \frac{\mu_\infty(\d x)}{x - z},
\end{equation*}
and (2) there exists $L > 0$ such that
\begin{equation*}
\mathbb P( \|\vec A_N\| > L ) \leq C N^{-D}. 
\end{equation*}
\end{definition}

We pause to note that, importantly,
\begin{equation*}
    \vec b_N^\T (\vec A_N - z \vec I)^{-1} \vec b_N = \mathcal S(z; \muVESD(\vec A_N, \vec b_N)).
\end{equation*}
For a function $F$ of bounded variation we also use the notation
\begin{equation*}
    \mathcal S(z;F) := \int \frac{\d F(x)}{x - z}, \qquad \imag z >0,
\end{equation*}
to denote the Riemann--Stieltjes integral.

To turn a local law into an estimate on the KS distance we use the following.
\begin{corollary}[\cite{bai_silverstein_98},~Corollary B.15]\label[corollary]{c:bai}
    Let $F$ be a distribution function and let $G$ be a function of bounded variation satisfying $\int |F(x) - G(x)| \d x < \infty$.  Assume that, for some constants $A > B > 0$,
    \begin{equation*}
        \int_{-B}^B \d F(x) = 1, \qquad \int_{-\infty}^{-B} |\d G(x)| =  0 = \int_{B}^{\infty} |\d G(x)|.
    \end{equation*}
    Then
    \begin{align*}
        d_{\rm{KS}}(F,G) &:= \sup_{x} |F(x) - G(x)| 
        \leq \frac{1}{\pi (1 - \kappa) (2 \gamma -1)} \left[ \int_{-A}^A |\mathcal S(z;\d F) - \mathcal S(z;\d G)| \d u \right. 
        \\ &\hspace{18em}+ \left. \frac{1}{v} \sup_x \int_{|y| \leq 2 v a} |G(x + y) - G(x)| \,\d y \right],
    \end{align*}
    where $\kappa$ satisfies
    \begin{equation*}
        \kappa = \frac{4B}{\pi (A-B)(2 \gamma -1)} < 1,
    \end{equation*}
    $z = u + \ii v$, and $\gamma$ and $a$ are related via
    \begin{equation*}
        \gamma = \frac{1}{\pi} \int_{-a}^a \frac{\d u}{1 + u^2} > \frac 1 2.
    \end{equation*}
\end{corollary}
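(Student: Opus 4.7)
The plan is to use the Perron--Stieltjes inversion formula together with a smoothing argument in the spirit of Esseen's classical inequality, adapted to Stieltjes transforms rather than characteristic functions. The starting point is the identity $\pi^{-1}\imag \mathcal{S}(u+\ii v;\d F) = \int \xi_v(u-t)\, \d F(t)$, where $\xi_v(s) := v/(\pi(s^2+v^2))$ is the Cauchy density at scale $v>0$; integrating in $u$ from $-\infty$ to $x$ yields the smoothed distribution function $F_v(x) := \int F(x-y)\xi_v(y)\, \d y = \pi^{-1}\int_{-\infty}^x \imag \mathcal{S}(u+\ii v;\d F)\, \d u$, and analogously $G_v$. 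Subtracting gives the exact formula $F_v(x)-G_v(x) = \pi^{-1}\int_{-\infty}^x \imag[\mathcal{S}(u+\ii v;\d F)-\mathcal{S}(u+\ii v;\d G)]\, \d u$, which cleanly isolates the Stieltjes transform difference we are given access to.

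First I would truncate the integral over $u \in \mathbb{R}$ to $[-A,A]$. Since $F$ is supported in $[-B,B]$ and $G$ vanishes outside $[-B,B]$, both Stieltjes transforms decay like $1/|u|$ for $|u|>A>B$, so the tail contribution is at most a multiple of $B/(A-B)$ times the oscillation of $F-G$ itself. This produces a self-referential estimate whose algebraic resolution is exactly the factor $1/(1-\kappa)$ in the claimed bound, with $\kappa = 4B/(\pi(A-B)(2\gamma-1))$ absorbing the fraction of the oscillation that the tail integral sees.

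Next I would relate $\sup_x |F(x)-G(x)|$ to $\sup_x |F_v(x)-G_v(x)|$ plus a remainder controlled by the regularity of $G$ only. Writing $F-G$ as $(F_v-G_v)$ plus the two smoothing errors $F-F_v$ and $G-G_v$, and splitting the defining $\xi_v$-integrals according to whether $|y|\leq va$ or $|y|>va$, the inner part captures Cauchy mass $\gamma = \pi^{-1}\int_{-a}^a(1+u^2)^{-1}\,\d u$ by a change of variables, and the outer part captures mass $1-\gamma$. On the outer part one exploits the monotonicity of $F$ (which is a genuine probability distribution function) to absorb $F$'s contribution; on the inner part one bounds $|G(x+y)-G(x)|$ using the integrated modulus of continuity of $G$. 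This is where both the factor $(2\gamma-1)^{-1}$ and the term $v^{-1}\sup_x\int_{|y|\leq 2va}|G(x+y)-G(x)|\, \d y$ emerge, the former because the useful part of the monotonicity trade-off scales like $\gamma-(1-\gamma)=2\gamma-1$.

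The main obstacle is this decoupling step: because $F$ is not assumed to have any regularity, one cannot directly bound $|F_v-F|$, and so the argument must route \emph{all} smoothness information through $G$ while using monotonicity of $F$ alone to control its own contribution to the smoothing error. This rearrangement is the technical heart of the Bai--Silverstein argument; once it is in place, combining with the truncation step above and rearranging to absorb the self-referential $\kappa$-term reproduces the stated inequality.
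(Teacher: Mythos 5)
The paper does not prove this statement: it is quoted verbatim from Bai and Silverstein (Corollary B.15 of their book), and the authors simply cite it. So there is no in-paper proof to compare your attempt against. That said, your sketch is a faithful high-level reconstruction of the Bai--Silverstein argument. The sequence you describe is exactly theirs: Perron--Stieltjes inversion identifies $\pi^{-1}\imag\mathcal S(\cdot+\ii v;\d F)$ with the density of the Cauchy-smoothed $F_v$; an Esseen-type decomposition then routes all regularity through $G$ via $v^{-1}\sup_x\int_{|y|\le 2va}|G(x+y)-G(x)|\,\d y$ while using only monotonicity of $F$ (this is the technical heart, and you name it correctly); the inner/outer split of the Cauchy mass produces the $(2\gamma-1)^{-1}$ factor; and truncating the $u$-integral to $[-A,A]$ yields a self-referential bound that rearranges to the $1/(1-\kappa)$ prefactor.

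The one place where your outline is too loose is the tail estimate that produces $\kappa$. Saying the tail is ``a multiple of $B/(A-B)$ times the oscillation of $F-G$'' is right in spirit but hides the derivation of the exact constant. One integrates by parts in $t$ to write $\mathcal S(z;\d F)-\mathcal S(z;\d G)$ in terms of $F-G$, then for $|u|>A$ uses $\int_{-B}^{B}|t-u-\ii v|^{-2}\,\d t\le 2B/(|u|-B)^{2}$, and finally $\int_{A}^{\infty}2B/(u-B)^{2}\,\d u=2B/(A-B)$; doubling for $u<-A$ gives $4B/(A-B)\cdot\|F-G\|_\infty$, which, after dividing by the $\pi(2\gamma-1)$ prefactor from the smoothing step, is precisely $\kappa\|F-G\|_\infty$. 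Tracking that computation is necessary to land on the stated constant, but your overall route is the correct one.
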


Without loss of generality, to estimate a KS distance, we can suppose that $\mathrm{supp}(\mu_\infty) \subseteq [-b,b]$.   
For $0 < \tau < 1$, set $A = b + \tau^{-1}, B = b + \tau$.  
Then
\begin{equation*}
    \kappa = \frac{ 4 (b + \tau)}{\pi (\tau^{-1}-\tau) (2 \gamma -1)} \overset{\tau \to 0}{\longrightarrow} 0.
\end{equation*}
And we choose $\tau$ sufficiently small so that this quantity $\kappa$ is less than 1. We assume $\tau$, and hence $A$, are chosen in this way for the forthcoming results.

\begin{lemma}\label[lemma]{l:bai-est}
    Suppose $\mathrm{supp}(\mu_\infty) \subseteq [-b,b]$, $b > 0$, where $\mu_\infty$ has a H\"older continuous, bounded density on $\mathbb R$.  Suppose the sequence $((\vec A_N, \vec b_N))_{N \geq 1}$ satisfies a local law with limit $\mu_\infty$.  Then for every $\epsilon > 0$, $D > 0$, $M > 1$, $z = u + \ii v \in \mathcal \mathcal D(N,\tau)$,
    \begin{align*}
      \mathbb P \left( \int_{-A}^A | \mathcal S(z; \muN) - \mathcal S(z;\mu_\infty)| \d u \geq N^{\epsilon} \sqrt{ \frac{1}{N \imag z}} + \frac{8 A^2}{(\imag z)^2 M} \right) \leq C M N^{-D}.
    \end{align*}
    where $\muN(\d x) = \muVESD(\d x; \vec A_N, \vec b_N)$.  Furthermore, if $G$ is the cumulative distribution function for $\mu_\infty$ then
    \begin{align*}
        \frac{1}{v} \sup_x \int_{|y| \leq 2 v a} |G(x + y) - G(x)| \,\d y \leq \sup_x |G'(x)| 2 v a^2.
    \end{align*}
\end{lemma}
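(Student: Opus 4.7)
The plan is to discretize the $u$-integral, apply the local law pointwise on a grid, and control the off-grid contribution via Lipschitz continuity in $u$.

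First, I observe that for any positive unit-mass measure $\mu$ and any fixed $v>0$, the function $u \mapsto \mathcal S(u+\ii v;\mu)$ is Lipschitz in $u$ with constant $1/v^2$, since $|\partial_u (x-u-\ii v)^{-1}| = |x-u-\ii v|^{-2} \leq v^{-2}$. Thus both $\mathcal S(\cdot;\muN)$ and $\mathcal S(\cdot;\mu_\infty)$ (at fixed imaginary part $v$) are Lipschitz in the real part with constant $v^{-2}$, hence their difference is $2v^{-2}$-Lipschitz in $u$. Partition $[-A,A]$ into $M$ subintervals of length $2A/M$ with endpoints $u_0,\ldots,u_M$. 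On each subinterval $[u_j,u_{j+1}]$ the integrand $|\mathcal S(u+\ii v;\muN)-\mathcal S(u+\ii v;\mu_\infty)|$ differs from its value at the left endpoint by at most $(2A/M)(2/v^2) = 4A/(Mv^2)$. Integrating over $[-A,A]$ and telescoping the constant-error contribution gives
\begin{equation*}
\int_{-A}^A |\mathcal S(z;\muN) - \mathcal S(z;\mu_\infty)|\,\d u
\;\leq\; \frac{2A}{M}\sum_{j=0}^{M-1} |\mathcal S(u_j+\ii v;\muN) - \mathcal S(u_j+\ii v;\mu_\infty)| \;+\; \frac{8A^2}{Mv^2}.
\end{equation*}

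Next, I apply the local law \eqref{eq:locallaw} at each of the $M$ points $u_j+\ii v \in \mathcal D(N,\tau)$ and take a union bound. With probability at least $1 - CMN^{-D}$,
\begin{equation*}
|\mathcal S(u_j+\ii v;\muN) - \mathcal S(u_j+\ii v;\mu_\infty)| \leq N^{\epsilon/2}\left(\sqrt{\frac{\imag \mathcal S(u_j+\ii v;\mu_\infty)}{Nv}} + \frac{1}{Nv}\right)
\end{equation*}
for every $j$. Summing and using the Riemann-sum bound with Cauchy--Schwarz,
\begin{equation*}
\frac{2A}{M}\sum_{j=0}^{M-1}\sqrt{\imag \mathcal S(u_j+\ii v;\mu_\infty)} \;\lesssim\; \int_{-A}^A \sqrt{\imag \mathcal S(u+\ii v;\mu_\infty)}\,\d u + O(v^{-1/2}A/M),
\end{equation*}
I apply Cauchy--Schwarz and the Poisson kernel identity $\int_{\mathbb R} \imag \mathcal S(u+\ii v;\mu_\infty)\,\d u = \pi$ (valid because $\mu_\infty$ is a probability measure) to obtain $\int_{-A}^A \sqrt{\imag \mathcal S(u+\ii v;\mu_\infty)}\,\d u \leq \sqrt{2A\pi}$, a constant in $N$. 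Since $A$ depends only on $\tau$ and $b$, the constants are absorbed into $N^\epsilon$ (enlarging $\epsilon$ infinitesimally). This yields the stated bound $N^\epsilon\sqrt{1/(Nv)} + 8A^2/(v^2M)$.

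The second claim follows by the mean value theorem: H\"older continuity of the density $G'$ (which by assumption is bounded) gives $|G(x+y)-G(x)| \leq \|G'\|_\infty |y|$, so $\int_{|y|\leq 2va}|G(x+y)-G(x)|\,\d y \leq \|G'\|_\infty \int_{-2va}^{2va}|y|\,\d y = O(v^2 a^2 \|G'\|_\infty)$, and dividing by $v$ yields the claimed linear-in-$v$ bound.

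The main obstacle is the conversion from the pointwise local law to an integrated estimate: naively summing pointwise error bars loses a factor of $M$, which would be prohibitive. The discretize-and-interpolate trick works only because the Lipschitz constant $v^{-2}$ multiplied by the grid spacing $2A/M$ sums (over $M$ subintervals of length $2A/M$) to a bound $8A^2/(Mv^2)$ that can be made small by choosing $M$ large, while the union bound cost is only polynomial in $M$ and $N^{-D}$ dominates for any fixed $D$.
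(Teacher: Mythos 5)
Your overall strategy is the same as the paper's: control the Lipschitz constant in $u$ of $\mathcal S(\cdot + \ii v;\cdot)$ to bound the Riemann-sum discretization error by $8A^2/(Mv^2)$, apply the local law at the $M$ grid points, and take a union bound at cost $CMN^{-D}$. The final absorption of $A$-dependent constants into $N^\epsilon$ and the treatment of the second claim via the mean value theorem also match.

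The one place you diverge is in handling the $\sqrt{\imag \mathcal S(z_j;\mu_\infty)/(Nv)}$ terms from the local law. The paper simply observes that $\imag\mathcal S(\cdot;\mu_\infty)$ is uniformly bounded on the upper half plane (the Poisson extension of the bounded density $G'$), so every grid term is $\leq K\sqrt{1/(Nv)}$, and the Riemann sum over $[-A,A]$ is $\leq 2AK N^\epsilon\sqrt{1/(Nv)}$ plus the discretization error. You instead go via Cauchy--Schwarz and the identity $\int_{\mathbb R}\imag\mathcal S(u+\ii v;\mu_\infty)\,\d u=\pi$. That works, but it is a longer route and creates one shaky intermediate step: you assert a Riemann-sum error $O(v^{-1/2}A/M)$ for $\sum_j\sqrt{\imag\mathcal S(u_j)}$, which would require a Lipschitz bound on $\sqrt{\imag\mathcal S(\cdot+\ii v)}$; since $\imag\mathcal S$ can be as small as $O(v)$ on $[-A,A]\setminus\mathrm{supp}\,\mu_\infty$, the derivative of the square root can be as large as $O(v^{-5/2})$, so that error estimate is not justified as stated. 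The issue disappears if you apply Cauchy--Schwarz to the sum directly, $\sum_j\sqrt{\imag\mathcal S(u_j)}\le\sqrt{M\sum_j\imag\mathcal S(u_j)}$, and then Riemann-sum the inner (Lipschitz) quantity $\imag\mathcal S$ against $\pi$ --- or, more simply, follow the paper and invoke boundedness of $\imag\mathcal S$ directly, which is available since the lemma already assumes $\mu_\infty$ has a bounded density.
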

\begin{proof}
    The last statement follows immediately after using the mean-value theorem.  To establish the first claim, we must discretize the integral that is involved and show that we need only use polynomially many discretization points to approximate it to any desired accuracy.  To do this, we show that the Lipschitz constant of the integrand depends on $\imag z$ in a sufficiently benign way.  For $\imag z > 0$,
    \begin{equation*}
        \left|\frac{\d}{\d z} \mathcal S(\mu_\infty;z)\right| = \int \frac{\mu_\infty(\d x)}{|x - z|^2} \leq \frac{1}{(\imag z)^2}. 
    \end{equation*}
    From this, it follows for $f(z) : =  \mathcal S(z; \muVESD(\vec A_N, \vec b_N)) - \mathcal S(z;\mu_\infty)$ we have
    \begin{equation*}
    \big| |f(u + \ii v)| - |f(u' + \ii v)| \big| \leq  |f(u + \ii v) - f(u' + \ii v)|  \leq \frac{2}{v^2} |u - u'|, \quad v > 0.
    \end{equation*}
    This implies that if one discretizes the integral
    \begin{equation*}
    \int_{-A}^A |f(u + \ii v)| \, \d u,
    \end{equation*}
    using $M+1$ equally-spaced points $-A = x_0, x_1, \ldots, x_M  = A$, then
    \begin{equation*}
        \Bigg| \frac{2A}{M} \sum_{j=0}^{M-1} |f(x_j + \ii v)| - \int_{-A}^A |f(u + \ii v)| \,\d u \Bigg| \leq \frac{8 A^2}{v^2 M},
    \end{equation*}
    because
    \begin{equation*}
        \left| \int_{x_j}^{x_{j+1}} (|f(u + \ii v)| - |f(x_j + \ii v)|) \d u \right| \leq \frac{2 A}{M} \frac{2}{v^2} |x_j - x_{j+1}| = \frac{2}{v^2} \left( \frac{2 A}{M} \right)^2.
    \end{equation*}
    Now set $v = N^{-\alpha}$ for $0 \leq \alpha < 1/2$ and fix $\epsilon > 0$.  And let $E_{N,M}$ be the event on which
    \begin{align*}
    \left| \vec b_N^\T (\vec A_N - z_j \vec I)^{-1} \vec b_N - \mathcal S(z_j;\mu_\infty)\right| \geq  N^{\epsilon} \left( \sqrt{ \frac{\imag \mathcal S(z_j;\mu_\infty)}{N \imag z_j}} + \frac{1}{N \imag z_j} \right),\quad
    z_j = x_j + \ii N^{-\alpha}, 
    \end{align*}
    for some $j = 0,1,2,\ldots,M-1$.  Using the estimates for $\mathcal D(N,\tau)$, we have that $\mathbb P(E_{N,M}) \leq C M N^{-D}$ where $D$ is as large as we like.  The result then follows by simply using that $\imag \mathcal S(z;\mu_\infty)$ is bounded in the upper-half plane and then bounding
    \begin{align*}
       \left( \sqrt{ \frac{\imag \mathcal S(z_j;\mu_\infty)}{N \imag z_j}} + \frac{1}{N \imag z_j} \right) \leq K  \sqrt{ \frac{1}{N v}},
    \end{align*}
    for a constant $K$.     
\end{proof}

\begin{theorem}
Suppose the sequence $((\vec A_N, \vec b_N))_{N \geq 1}$ satisfies a local law with limit $\mu_\infty$ and that $\mu_\infty$ has a H\"older continuous, bounded density on $\mathbb R$.  Then  for any $D>0$, $\epsilon > 0$, there exists $C> 0$ such that
\begin{equation*}
    \mathbb P \left( d_{\rm{KS}}(\muVESD(\vec A_N, \vec b_N),\mu_\infty) \geq N^{\epsilon -1/3} \right)\leq C N^{-D},
\end{equation*}
for $N$ sufficiently large.
\end{theorem}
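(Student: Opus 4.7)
The plan is to apply \cref{c:bai} with $F$ the CDF of $\muN := \muVESD(\cdot\,;\vec A_N, \vec b_N)$ and $G$ the CDF of $\mu_\infty$, bound the two resulting terms using \cref{l:bai-est}, and then choose the imaginary part $v = \imag z$ of the integration contour optimally.

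First, I would fix $\tau \in (0,1)$ small enough so that, with $B = b+\tau$ and $A = b+\tau^{-1}$ (where $b$ is chosen so that $\operatorname{supp}(\mu_\infty) \subseteq [-b,b]$), the quantity $\kappa$ in \cref{c:bai} is strictly less than one, following the discussion preceding \cref{l:bai-est}. The corollary then gives, on any horizontal line $\imag z = v$,
\[
\KS(\muN, \mu_\infty) \leq C_0 \left[ \int_{-A}^A |\mathcal S(z;\muN) - \mathcal S(z;\mu_\infty)|\,\d u + \frac{1}{v}\sup_x \int_{|y|\leq 2va}|G(x+y)-G(x)|\,\d y \right].
\]
Since H\"older continuity together with compact support forces the density of $\mu_\infty$ to be bounded, the second bracketed quantity is $O(v)$ by the last claim of \cref{l:bai-est}.

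For the integral term, I would set $v = N^{-\alpha}$ for a parameter $\alpha \in (0,1)$ to be chosen and $M = \lceil N^{C'} \rceil$ for a large constant $C'$. The constraint $u+\ii v \in \mathcal D(N,\tau')$ required by the local law is arranged for $\tau' \leq \min\{\tau, 1-\alpha\}$ small enough that $\tau'^{-1} \geq A + b$. Applying the first estimate of \cref{l:bai-est} with an auxiliary $\epsilon' > 0$ and $D' = D + C' + 1$ then yields an event of probability at least $1 - CN^{-D}$ on which
\[
\int_{-A}^A |\mathcal S(z;\muN) - \mathcal S(z;\mu_\infty)|\,\d u \leq N^{\epsilon'}\sqrt{1/(Nv)} + \frac{8A^2}{v^2 M} \leq 2 N^{\epsilon' + (\alpha-1)/2},
\]
the second summand being absorbed by taking $C'$ large relative to $\alpha$. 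Combined with the $O(N^{-\alpha})$ regularity term, this gives $\KS(\muN,\mu_\infty) = O(N^{\epsilon' + (\alpha-1)/2} + N^{-\alpha})$ on the same event.

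Balancing the two surviving exponents via $\epsilon' + (\alpha-1)/2 \approx -\alpha$ yields $\alpha = 1/3$; then taking $\epsilon' = \epsilon$ gives $\KS(\muN,\mu_\infty) \leq C N^{\epsilon - 1/3}$ with probability at least $1 - CN^{-D}$, which is exactly the claimed bound. I do not foresee any serious obstacle. The only technical point needing care is the union bound used inside \cref{l:bai-est}: since the number of discretization points is polynomial in $N$ while each pointwise tail probability is superpolynomially small, its cost is easily absorbed by slightly enlarging $D'$ in the local law.
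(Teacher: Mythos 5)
Your proposal is correct and follows essentially the same route as the paper: apply \cref{c:bai}, bound the integral term with \cref{l:bai-est}, then optimize over the height $v$ of the horizontal contour to get $v = N^{-1/3}$. The paper simply takes $M = N$ rather than $M = N^{C'}$ (which already makes the discretization error $8A^2/(\eta^2 M) = O(N^{-1/3})$), and absorbs the $M$ factor in the union bound into the constant, as you also note.
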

\begin{proof}
So, using both \cref{c:bai} and \cref{l:bai-est} if $\mu_\infty$ has a H\"older continuous, bounded density on $\mathbb R$ and if the sequence $((\vec A_N, \vec b_N))_{N \geq 1}$ satisfies a local law with limit $\mu_\infty$, we can conclude that
\begin{equation*}
    \mathbb P \left( d_{\rm{KS}}(\muVESD(\vec A_N, \vec b_N),\mu_\infty) \geq N^{\epsilon} \sqrt{ \frac{1}{N \eta}} + \frac{8 A^2}{\eta^2 M} + 2 \|G'\|_\infty a^2 \eta \right) \leq C M N^{-D},
\end{equation*}
where all the constants have the same meaning as in \cref{c:bai} and \cref{l:bai-est}. To optimize the error here, we see that one should take $\eta = N^{-1/3}$ and $M = N$, establishing the claim.
\end{proof}

\begin{remark}
    The portion of the local law that applies to $\mathcal D_o$ can be used to show that for any $\epsilon >0$ the support points of the VESD for $(\vec A_N, \vec b_N)$ must lie within a distance $N^{-2/3 +\epsilon}$  of $\mathrm{supp}\,\mu$ with overwhelming probability \cite{erdos_yau_17}.
\end{remark}

\subsection{Wigner matrices}
\label{sec:wigner}

Consider a random matrix 
\begin{equation}
    \vec A_N = \frac{1}{2 \sqrt{N}} (a_{ij})_{1 \leq i,j\leq N}
\end{equation}
where the real-valued random variables $a_{ij}$ are jointly independent for $i \leq j$ and satisfy
\begin{equation*}
    \mathbb E [a_{ij}]= 0, \qquad \mathbb E [a_{ij}^2]= \begin{cases} 1 & i \neq j \\ c & i = j \end{cases}, \qquad a_{ij} = a_{ji},
\end{equation*}
and
\begin{equation*}
    \mathbb E [|a_{ij}|^k] = C_k < \infty,
\end{equation*}
for all $k > 2$. Such a matrix is called a Wigner matrix and the distribution is referred to as a Wigner ensemble. In \cite{erdos_yau_17}, for example, it is shown that for any sequence of vectors $(\vec b_N)_{N \geq 1}$ that are independent\footnote{This reference actually show that the local law holds for any fixed deterministic sequence of vectors, but the estimates the authors give are uniform in the choice of vectors, and the result can be extended to hold for random vectors that are independent of the matrix entries.} of $a_{ij}$ for all $i,j$, the pairs $((\vec A_N, \vec b_N))_{N \geq 1}$ satisfy a local law supported on $[-1,1]$ with
\begin{equation}\label{eq:GOElaw}
    \mu_\infty(\d x) = \mu_{\pU}(\d x) =  \frac{2}{\pi} \sqrt{1 - x^2} \d x.
\end{equation}
The most widely known case of a Wigner matrix is the so-called Gaussian Orthogonal Ensemble\footnote{Note that the scaling of the matrix here is chosen so that the eigenvalues typically lie within the interval $[-1,1]$.} (GOE):
\begin{equation}\label{eq:GOE}
\vec A_N = \frac{1}{2 \sqrt{2N}} (\vec{X} + \vec{X}^\T),
\end{equation}
where $X$ is an $N \times N$ Gaussian matrix with independent and identically distributed (iid) standard normal entries.  In the work of Trotter \cite{trotter_84} (see also \cite{dumitriu_edelman_02}) a full distributional characterization of the Householder tridiagonalization of GOE is given.  From this, one can see that the upper-left subblocks of this tridiagonalization tend to
\begin{equation*}
    \operatorname{tridiag}
    \left(\hspace{-1em} \begin{array}{c}
        \begin{array}{XXXX} 1/2 & 1/2 & \cdots & 1/2 \end{array} \\
        \begin{array}{XXXXX} \makebox[\widthof{1/2}][c]{0} & \makebox[\widthof{1/2}][c]{0}  & \cdots& \cdots & \makebox[\widthof{1/2}][c]{0}  \end{array} \\
        \begin{array}{XXXX} 1/2 & 1/2 & \cdots & 1/2 \end{array} 
    \end{array} \hspace{-1em}\right),
\end{equation*}
which correctly reflects the fact that the local law has \eqref{eq:GOElaw} as its limit\footnote{This is the Jacobi matrix associated with Chebyshev second-kind polynomials.}.

Then, for $\rho(x) = \frac{2}{\pi} \sqrt{1-x^2}$, $\pU_{n}(x) = 2^n \ppi_n(x;\mu_{\pU})$,
\begin{equation*}
    1 = \| \pU_{n} \|_{L^2(\mu_{\pU})} = 2^n \| \ppi_n(\cdot\,;\mu_{\pU})\|_{L^2(\mu_{\pU})}
    ,\qquad
    \gamma_{n-1}{(\mu_{\pU})}
    = -\ii \pi 2^{2n-1}, \qquad \mathfrak c = \frac{1}{2},
\end{equation*}
and
\begin{equation*}
    \vec{M}_n(z;\mu_{\pU})
    = 
    \begin{bmatrix}
    -\pi \ii \pU_{n}(z) \pU_{n-1}(z) & \pU_{n}(z)^2 \\ 
    \frac{\pi^2}{4} \pU_{n-1}(z)^2 &  -\pi \ii \pU_{n}(z) \pU_{n-1}(z)
    \end{bmatrix}.
\end{equation*}
It follows that
\begin{equation*}
    |\pU_{n}(z) \sqrt{1-z^2}|\leq 1
    ,\qquad
    z\in[-1,1],
\end{equation*}
so 
\begin{equation*}
   | \rho(z) \vec{M}_n(z;\mu_{\pU})|
   \underset{\mathrm{entrywise}}{\leq}
   \begin{bmatrix}
   2|\pU_{n-1}(z)| & \frac{2}{\pi}|\pU_{n}(z)| \\
   \frac{\pi}{2} | \pU_{n-1}(z) | & 2|\pU_{n-1}(z)|
   \end{bmatrix}, \quad z \in [-1,1].
\end{equation*}
Since $\|\pU_{n}\|_{[-1,1]} \leq n+1$, we see that we may set $\delta = 1, \delta' = 2$ in \cref{thm:forwards}. Then for \cref{asm:nearby_regular} to hold with high probability, we take $\gamma = 3/2$, $\beta < 1/3$.  This gives forward stability, with high probability, provided
\begin{equation}\label{bounds}
    k = o(N^{\frac{2}{54} - \epsilon}), \quad  \epslan = o(k^{-11}),
\end{equation}
with forward error $O(\epslan k^{11})$. Better bounds could be obtained by deriving a version of \cref{thm:forwards} which uses an explicit bound on the growth of the orthogonal polynomials of $\mu_U$ instead of \cref{thm:regularity_bounds_fwd}.

\subsection{Sample covariance matrices}
\label{sec:sample_covariance}

Consider the random matrix
\begin{equation}\label{eqn:Wishart}
\vec A_N = \frac{1}{N} \vec X \vec X^\T, \quad  \vec X = (x_{ij})_{\substack{1 \leq i \leq N\\ 1 \leq j \leq M}}, \qquad M \geq N,
\end{equation}
where the real-valued random variables $x_{ij}$ are jointly independent for all $i,j$ and satisfy
\begin{equation*}
    \mathbb E [x_{ij}]= 0, \qquad \mathbb E [x_{ij}^2]= 1,
\end{equation*}
and
\begin{equation*}
    \mathbb E [|x_{ij}|^k] \leq C_k < \infty,
\end{equation*}
for all $k > 2$. Such a matrix is called a sample covariance matrix. In \cite{knowles_yin_17}, for example, it is shown that for any sequence of vectors $(\vec b_N)_{N \geq 1}$ that are independent\footnote{As noted above, the reference here again establishes this result for a deterministic sequence of vectors.} of $x_{ij}$ for all $i,j$, the pairs $((\vec A_N, \vec b_N))_{N \geq 1}$ satisfy a local law supported on $[\lambda_-,\lambda_+]$ with
\begin{equation*}
    \mu_\infty(\d x) = \mu_{\rm{MP}}(\d x) = \frac{1}{2 \pi d x} \sqrt{(\lambda_+ - x)(x- \lambda_-)} \,\d x
    , \qquad 
    \lambda_\pm = (1 \pm \sqrt{d})^2,
\end{equation*}
if $N/M \to d \in (1,\infty)$.  This is the well-known Marchenko--Pastur law.

The most widely studied example of a sample covariance matrix is the so-called Wishart distribution \cite{wishart_28} where $x_{ij}$ are iid standard normal random variables and in this case the Golub-Kahan bidiagonalization procedure can be carried out in a distributional sense \cite{silverstein_86} (see also \cite{dumitriu_edelman_02}).  From this, one sees that the upper-left subblocks of this tridiagonalization tend to
\begin{equation*}
    \operatorname{tridiag}
    \left(\hspace{-1.5em} \begin{array}{c}
        \begin{array}{XXXX} \sqrt{d}  & \sqrt{d} & \cdots & \sqrt{d}  \end{array} \\
        \begin{array}{XXXXX} 1 & 1 +d & \cdots& \cdots & 1+d \end{array} \\
        \begin{array}{XXXX} \sqrt{d}   &\sqrt{d}   & \cdots & \sqrt{d}   \end{array} 
    \end{array} \hspace{-.75em}\right).
\end{equation*}
This leads one to conjecture that this gives three-term recurrence coefficients for the polynomials orthogonal to the Marchenko--Pastur distribution \cite{marcenko_pastur_67} (in fact, this provides an alternate proof of this).  Consider $q_n(x)$, $n = 0,1,2$ that satisfy, $q_0(x) = 1$,
\begin{align*}
x q_0(x) &= \sqrt{d}\,q_1(x) +  q_0(x) ,\\
x q_n(x) &= \sqrt{d}\, q_{n+1}(x) + (1 + d) q_n(x) + \sqrt{d}\, q_{n-1}(x), \qquad n \geq 1.
\end{align*}    
Next, note that
\begin{equation*}
    y\mapsto  2 y \sqrt{d} +1  + d
\end{equation*}
maps $[-1,1]$ to $[\lambda_-,\lambda_+]$.  Set $x = 2 y \sqrt{d} +1  + d$, $\check q_n (y) = q_n(2 y \sqrt{d} +1  + d)$ and we find
\begin{equation*}
\check q_1(y)  = (2 y + \sqrt d) \check q_0(y)
,\qquad
 \check q_{n+1}(y) + \check q_{n-1}(y)  = 2 y  \check q_n(y)
 , \qquad n \geq 1.
\end{equation*}
From this it follows that
\begin{equation*}
\check q_0(y) = U_0(y), \quad \check q_1(y) = U_1(y) + \sqrt{d}\, U_0(y),
\end{equation*}
and therefore, if we used the convention that $U_{-1}(y) = 0$
\begin{equation*}
    \check q_n(y) = U_n(y) + \sqrt{d}\, U_{n-1}(y), \qquad n \geq 0.
\end{equation*}
By explicitly calculating inner products, it was shown in \cite{deift_trogdon_20} that 
\[p_n(x;\mu_{\rm{MP}}) = \check q_n\left( \frac{ x - 1 - d}{2 \sqrt{d}} \right) = q_n(x).\]
It can also be shown that $ \mathfrak c(\lambda_+,\lambda_-) = \sqrt{d}$ is such that $\mathfrak c^n \ppi_n(x;\mu_{\rm{MP}}) = p_n(x;\mu_{\rm{MP}})$.  Therefore we obtain similar bounds on $\vec M_n(x,\mu_{\rm{MP}})$ as we did for $\vec M_n(x,\mu_{\pU})$.  Then \eqref{bounds} holds in the same way as for Wigner matrices.

\section{Examples}

In this section, we provide numerical experiments for several examples to which our analysis can be applied.

\subsection{Wigner matrices}
\label{sec:experiments:Wigner}

\begin{figure}[htb]
    \centering
    \begin{subfigure}[t]{0.48\textwidth}
         \centering
         \includegraphics[scale=.7]{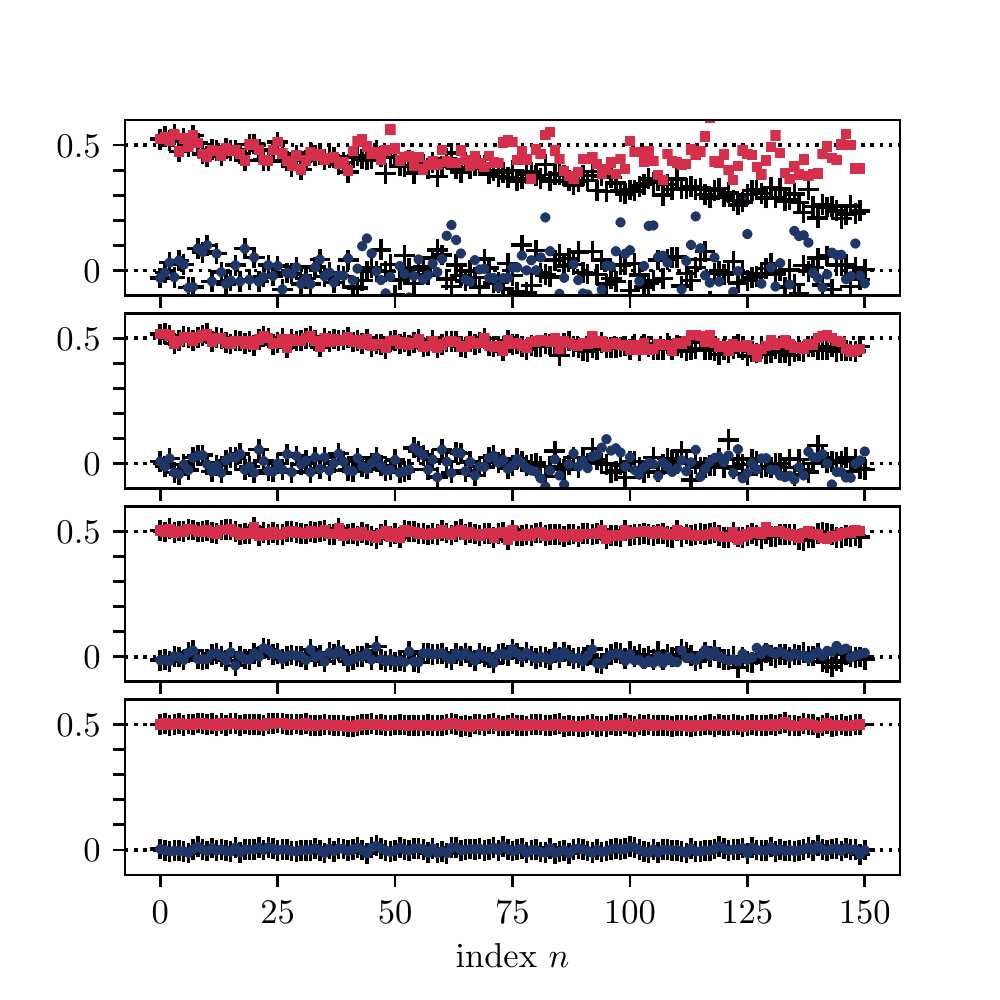}
         \caption{Recurrence coefficients $\overline{\alpha}_n$ \textup{(\includegraphics[scale=.8]{imgs/legend/circ.pdf})} and $\overline{\beta}_n$ \textup{(\includegraphics[scale=.8]{imgs/legend/square_red.pdf})}. Exact arithmetic counterparts shown as pluses \textup{(\includegraphics[scale=.8]{imgs/legend/plus_k.pdf})} and limiting values shown as dotted lines \textup{(\includegraphics[scale=.8]{imgs/legend/dot.pdf})}.}
         \label{fig:motivating_experiment_coeff_scale}
    \end{subfigure}
    \hfill
    \begin{subfigure}[t]{0.48\textwidth}
         \centering
         \includegraphics[scale=.7]{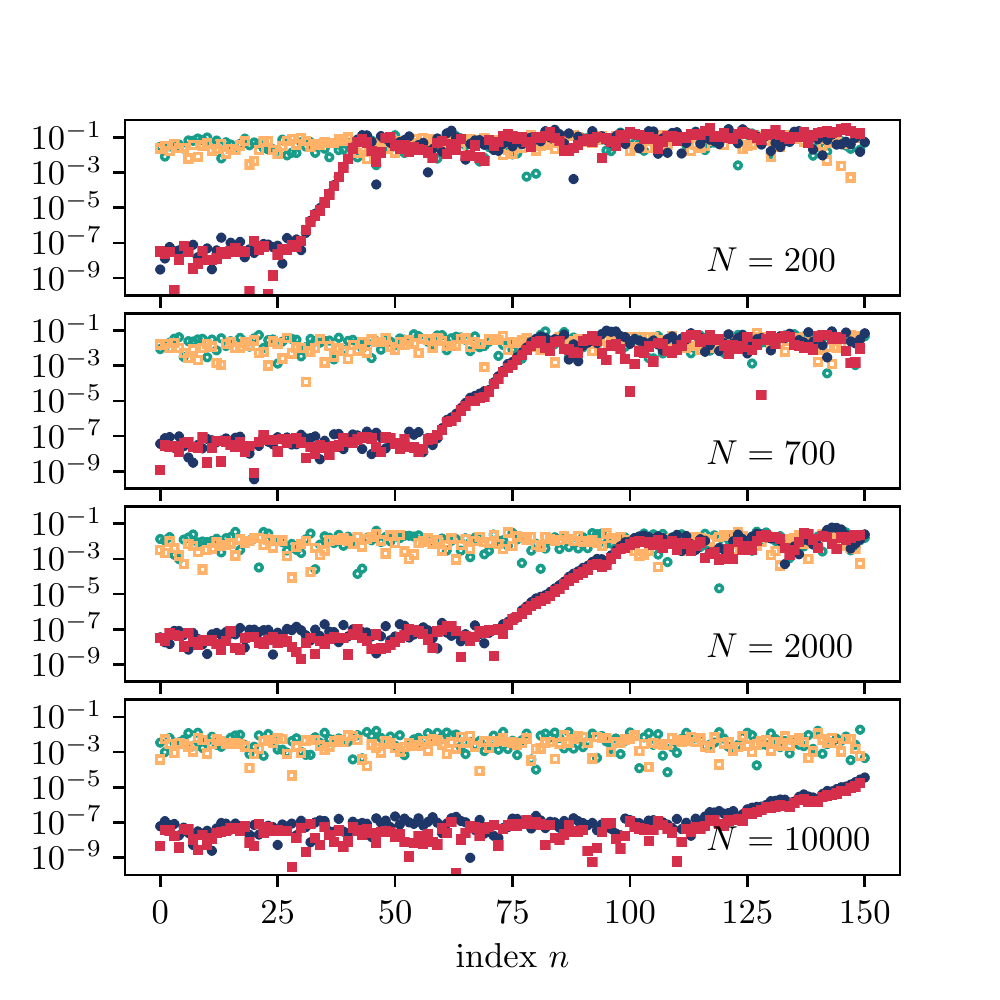}
         \caption{Forward error of recurrence coefficients $|\alpha_n - \overline{\alpha}_n|$ \textup{(\includegraphics[scale=.8]{imgs/legend/circ.pdf})} and $|\beta_n - \overline{\beta}_n|$ \textup{(\includegraphics[scale=.8]{imgs/legend/square_red.pdf})}
         and distance to limiting values $|0 - \overline{\alpha}_n|$ \textup{(\includegraphics[scale=.8]{imgs/legend/circ_green.pdf})} and $|1/2 - \overline{\beta}_n|$ \textup{(\includegraphics[scale=.8]{imgs/legend/square_yellow.pdf})}.}
         \label{fig:motivating_experiment_coeff_fe_scale}
    \end{subfigure}
    \caption{Output of Lanczos run on $(\vec{A}_N,\vec{b}_N)$ in single precision arithmetic, where $\vec{A}_N$ is a GOE matrix of size $N$ and $\vec{b}_N$ is an independent vector; see \cref{fig:motivating_experiment} for more details.}
    \label{fig:motivating_experiment_scale}
\end{figure}

In \cref{fig:motivating_experiment_scale}, we show plots akin to those in \cref{fig:motivating_experiment} for several values of $N$.
In particular, we take $\vec{A}_N$ as a random matrix from the Gaussian Orthogonal Ensemble \cref{eq:GOE} and $\vec{b}_N$ as a vector independent of $\vec{A}_N$.
As expected, as $N$ increases so that $\muN$ becomes nicer, the Lanczos algorithm remains forward stable for more iterations. The number of iterations for which it remains forward stable grows sublinearly with respect to $N$. 
When $n$ is small enough relative to $N$, the finite precision coefficients $\overline{\alpha}_n$ and $\overline{\beta}_n$ are much closer to their exact arithmetic counterparts than to the limiting values.
This behavior is suggested by \cref{thm:forwards}.

\subsection{Solving random linear systems}
\label{sec:random_system}
The mathematical behavior of a number of Krylov subspace methods used to solve systems involving random matrices have been studied rigorously.
Such algorithms include conjugate gradient and MINRES \cite{deift_trogdon_20,paquette_trogdon_22,ding_trogdon_21}, (accelerated) gradient descent \cite{paquette_merrienboer_paquette_pedregosa_22}, Neumann series iteration \cite{zhang_trogdon_22}, and GMRES \cite{chen_trogdon_greenbaum_23}.
The most basic result of these analyses is that the macroscopic behavior of the algorithms, such as the error at step $k$, often becomes nearly deterministic when the random matrix is sufficiently large. 
That is, the error at step $k$ converges to some fixed deterministic value when the random matrix becomes large.
Since Lanczos-based methods such as conjugate gradient and MINRES are, in general,  very susceptible to the impacts of floating point arithmetic \cite{greenbaum_97} one may wonder the extent to which analyses such as \cite{deift_trogdon_20,paquette_trogdon_22,ding_trogdon_21} hold in finite precision arithmetic.

\begin{figure}[ht]
    \centering
    \includegraphics[width=\textwidth]{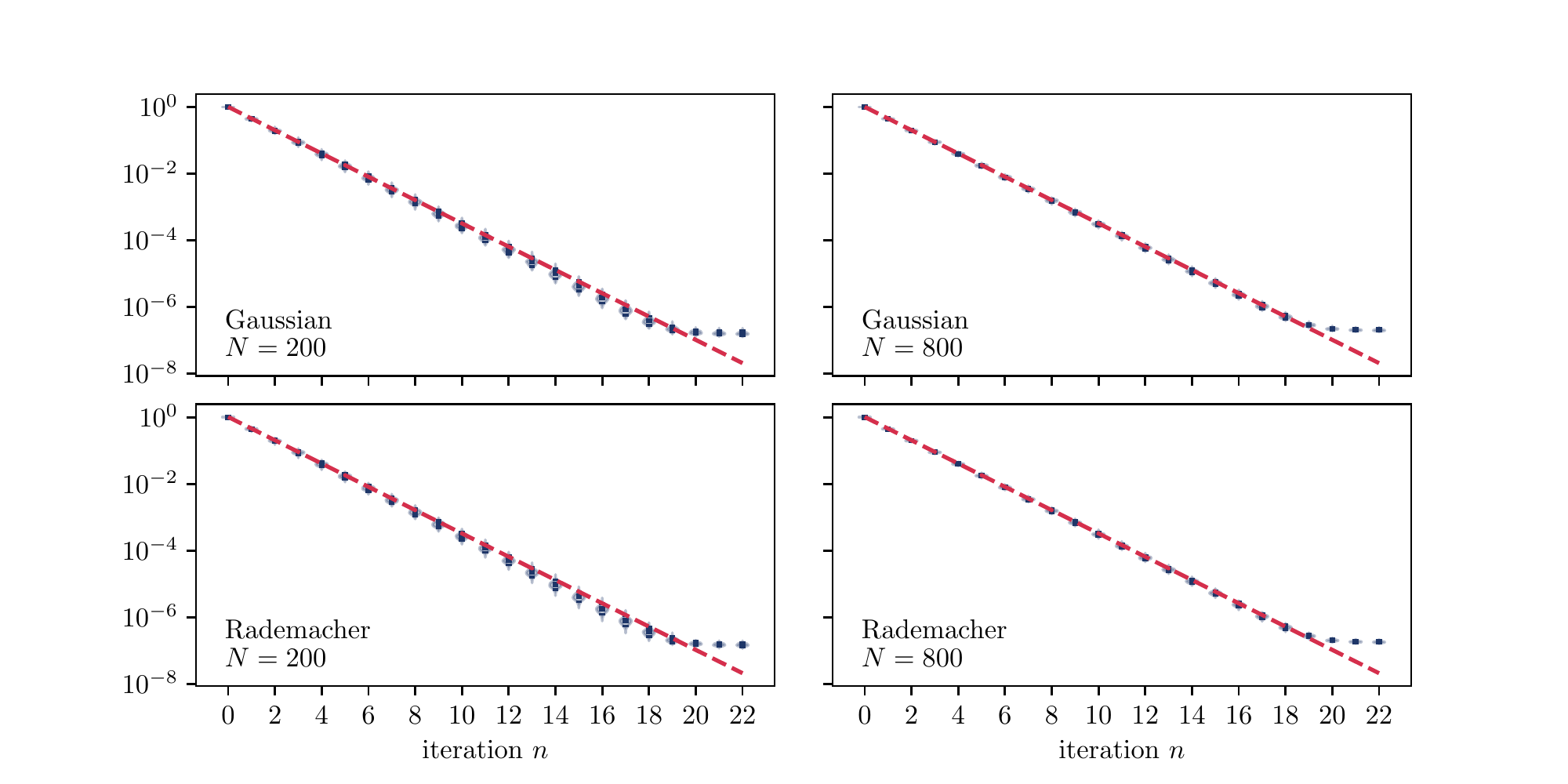}
    \caption{Error of Lanczos used to solve the system $\vec{A}\vec{x} = \vec{b}$ in single precision floating point arithmetic.
    Here  $\vec{b}_N$ is proportional to the all ones vector and $\vec{A}_N = N^{-1} \vec{X}\vec{X}^\T$ where the entries of $\vec{X}$ are either iid standard normal random variables or iid Rademacher random variables ($\pm 1$ with equal probability).
    For each $n,N$, the violin plot gives the distribution of error and the 5\%, 50\%, and 95\% quantiles are marked.
    Notice the convergence to the ``deterministic'' behavior \textup{(\includegraphics[scale=.8]{imgs/legend/dash_red.pdf})} as $N$ increases, at least until the maximal accuracy is reached. 
    }
    \label{fig:mp_system}
\end{figure}

In exact arithmetic, assuming $\vec{A}$ is positive definite, the iterate $\vec{x}_k := \Qk (\Tk)^{-1} \vec{e}_0$ is mathematically equivalent to the iterate produced by the conjugate gradient algorithm \cite{hestenes_stiefel_52} used to solve $\vec{A}\vec{x} = \vec{b}$.
This implies $\vec{x}_k$ is the optimal Krylov subspace approximation to $\vec{A}^{-1} \vec{b}$ in the $\vec{A}$-norm.
When $\vec{A}$ is a sample covariance matrix of the form described in \cref{sec:sample_covariance}, \cite{paquette_trogdon_22} shows that (under certain moment conditions),
\begin{equation}
\label{eqn:CG_error_limit}
    \| \vec{A}^{-1} \vec{b} - \vec{x}_k \|_{\vec{A}}
    \to \frac{d^{k/2}}{1-d}
    \text{ in probability as $N\to\infty$ and $N/M\to d$.}
\end{equation}

In finite precision arithmetic, the iterate 
\begin{equation}
\label{eqn:lanczos_CG}
    \overline{\vec{x}}_k := \Qkfp (\Tkfp)^{-1} \vec{e}_0
\end{equation}
is, in general, no longer optimal. 
However, the analysis in this paper can be applied to the iterate $\overline{\vec{x}}_k$, at least assuming it is computed exactly from the quantities $\Qkfp$ and $\Tkfp$.\footnote{A full analysis of the standard CG implementation \cite{hestenes_stiefel_52} is well beyond the scope of this paper. 
In fact, to the best of our knowledge, it is not even known rigorously whether Paige's analysis extends to such an implementation. 
In practice the finite precision arithmetic behavior of \cref{eqn:lanczos_CG} and the standard CG implementation are quite similar, so it is common to analyze \cref{eqn:lanczos_CG} \cite{greenbaum_89,strakos_greenbaum_92}.}

We perform a numerical experiment with sample covariance matrices $\vec{A}_N = N^{-1} \vec{X}\vec{X}^\T$, where the entries of $\vec{X}$ are either iid standard normal random variables or iid Rademacher random variables.
In particular, we generate matrices for each of these distributions at $d=0.2$ and $N=200$ or $N=800$ and set $\vec{b}_N$ proportional to the all ones vector.
We then run Lanczos on $(\vec{A}_N,\vec{b}_N)$ in single precision arithmetic to get $\Qkfp$ and $\Tkfp$.
We compute $\overline{\vec{x}}_k$ using a standard linear system solver from numpy in double precision arithmetic. 

The results of 1000 repetitions of each of these experiments are reported in \cref{fig:mp_system}.
As expected, at least until convergence stagnates, the error of the algorithm concentrates around the estimate \cref{eqn:CG_error_limit} as $N$ increases.
The error $\|\vec{A}^{-1}\vec{b} - \overline{\vec{x}}_k\|_{\vec{A}}$ stagnates around the machine precision, which is essentially all we could hope for given that $\Tkfp$ is computed in single precision floating point arithmetic.

\section{Deferred proofs}

\subsection{Proofs of bounds for regular measures}
\label{sec:regular_proofs}

\begin{proof}[Proof of \cref{thm:op_bound}]
    The proof follows \cite[Lemma 6]{fischer_96}.
    Fix $n\leq 2k-1$.
    As a special case of the Markov brothers' inequality for polynomials \cite[Theorem 1.10]{rivlin_81}, the derivative $\opp{n}(\cdot\,;\mu)$ of $\op{n}(\cdot\,;\mu)$ satisfies
    \begin{equation*}
        \|\opp{n}(\cdot\,;\mu)\|_{[\tta,\ttb]} 
        \leq \frac{2n^2}{\ttb-\tta} \|\op{n}(\cdot\,;\mu)\|_{[\tta,\ttb]}.
    \end{equation*}
    Let $x^*$ be such that $|\op{n}(x^*;\mu)| = \| \op{n}(\cdot\,;\mu) \|_{[\tta,\ttb]}$ and define
    \begin{equation*}
        \mathcal{B} := \Big\{ x \in [\tta,\ttb] : |x-x^*| \leq \frac{\ttb-\tta}{4n^2} \Big\}.
    \end{equation*}
    Using the triangle inequality, for any $x\in \mathcal{B}$,
    \begin{align*}
        |\op{n}(x;\mu)|
        &= \Big| \op{n}(x^*;\mu) + \int_{x^*}^{x} \opp{n}(y;\mu) \,\d{y} \Big|
        \\&\geq |\op{n}(x^*;\mu)| - |x-x^*| \| \opp{n}(\cdot\,;\mu) \|_{[\tta,\ttb]}
        \\&\geq |\op{n}(x^*;\mu)| - \frac{\ttb-\tta}{4n^2} \frac{2n^2}{\ttb-\tta} |\op{n}(x^*;\mu)|
        \\&\geq \frac{1}{2}|\op{n}(x^*;\mu)|.
    \end{align*}
    Both endpoints of $\mathcal{B}$ must be in $[\tta,\ttb]$, so $\max(\mathcal{B})-\min(\mathcal{B}) \geq (\ttb-\tta)/(4n^2) \geq (\ttb-\tta)/(16k^2)$ and hence $\mu(\mathcal{B}) \geq K$.
    Using this and the fact that $\op{n}(\cdot\,;\mu)$ is normalized,
    \begin{equation*}
        1 = \int_{\tta}^{\ttb} \op{n}(x;\mu)^2 \mu(\d{x})
        \geq \int_{\mathcal{B}} \op{n}(x;\mu)^2 \mu(\d{x})
        \geq \frac{1}{4} \op{n}(x^*;\mu)^2 \mu(\mathcal{B})
        \geq \frac{1}{4} \op{n}(x^*;\mu)^2 K.
    \end{equation*}
    Solving for $|\op{n}(x^*;\mu)|$ we find
    \[
        \| \op{n}(\cdot\,;\mu) \|_{[\tta,\ttb]}
        = |\op{n}(x^*;\mu)|
        \leq \frac{2}{\sqrt{K}}.
    \]
    Since this holds for all $n\leq 2k-1$, the lemma follows.
\end{proof}

\begin{proof}[Proof of \cref{thm:regularity_bounds}]

Using the triangle inequality, for any $x,y$,
\begin{align*}
    | \muN([x,y]) - \mu_{\infty}([x,y]) |
    &= |\muN((-\infty,y])-\muN((-\infty,x)) - \mu_{\infty}((-\infty,y])+\mu_{\infty}((-\infty,x)) |
    \\&\leq | \muN((-\infty,y]) - \mu_{\infty}((-\infty,y])  | + |\muN((-\infty,x)) - \mu_{\infty}((-\infty,x)) |
    \\&\leq 2 \KS(\muN, \mu_{\infty}).
\end{align*}
Thus, one easily verifies that
\begin{equation*}
    \muN([x,y]) \geq \mu_{\infty}([x,y]) - 2\KS(\muN, \mu_{\infty}).
\end{equation*}
Suppose $x,y  \in [\tta',\ttb']$ and $|x-y| \geq ({\ttb-\tta})/({16k^2})$.
Then the length of $[x,y]\cap[\tta,\ttb]$ is at least $(\ttb-\tta)/(32k^2)$, so using \cref{eqn:asm_regular}, 
\[
\mu_\infty([x,y]) \geq L\left(\frac{\ttb-\tta}{32k^2}\right)^\gamma.
\]
Using this, our assumption on $k$, \cref{eqn:KS_assum}, and the fact $\ttb'
-\tta' \geq \ttb-\tta$, we obtain a bound
\begin{equation*}
    \muN([x,y]) 
    \geq L\left(\frac{\ttb-\tta}{32k^2}\right)^\gamma - 2N^{-\alpha}
    \geq  N^{-\alpha}
    ,\qquad \forall x,y  \in [\tta',\ttb'] : |x-y| \geq ({\ttb'-\tta'})/({16k^2}).
\end{equation*}
\Cref{thm:op_bound} with $K=N^{-\alpha}$ then gives a bound for the orthogonal polynomials $\op{n}$ for $\mu = \muN$:
\begin{equation*}
    \opMax{k}(\muN;[\tta',\ttb']) 
    \leq \frac{2}{\sqrt{N^{-\alpha}}}
    \leq 2\sqrt{\frac{3}{L}} \left(\frac{32}{\ttb-\tta}\right)^{\gamma/2} k^\gamma.
\end{equation*}
Clearly $2\sqrt{3} < 4$, so the result is established.
\end{proof}

\begin{proof}[Proof of \cref{thm:regularity_bounds_fwd}]
Fix $n\leq 2k-1$.
Given \cref{eqn:asm_regular}, we can apply \cref{thm:op_bound} with $K = L ((\ttb-\tta)/(16k^2))^{\gamma}$ to get a bound
\begin{equation*}
    \opMax{k}(\mu_\infty;[\tta,\ttb])
    \leq \frac{2}{\sqrt{L}} \left( \frac{16k^2}{\ttb-\tta} \right)^{\gamma/2}.
\end{equation*}
This gives the first part of the lemma.

The Markov brothers' inequality for polynomials implies
\begin{equation*}
     \|\opp{n}(\cdot\,;\mu_\infty)\|_{[\tta',\ttb']}
     \leq \frac{2 n^2}{\ttb'-\tta'} \|\op{n}(\cdot\,;\mu_\infty)\|_{[\tta',\ttb']}.
\end{equation*}
Thus, since the supports of $\muN$ and $\mu_\infty$ are contained in $[\tta',\ttb']$, integrating by parts,
\begin{align*}
    |\mom{n}(\muN;\mu_\infty) - \mom{n}(\mu_{\infty};\mu_\infty)|
    &=\bigg|\int_{\tta'}^{\ttb'} \op{n}(x;\mu_\infty) \big(\mu_{N}(\d{x}) - \mu_{\infty}(\d{x})\big) \bigg|
    \\&\leq \int_{\tta'}^{\ttb'} |\opp{n}(x;\mu_\infty)| \big|\mu_{N}((-\infty,x]) - \mu_{\infty}((-\infty,x])\big|\,\d{x}
    \\&\leq  (\ttb'-\tta') \| \opp{n}(\cdot\,;\mu_\infty) \|_{[\tta',\ttb']} \KS(\muN, \mu_{\infty}) 
    \\&\leq 2 n^2 \|\op{n}(\cdot\,;\mu_\infty)\|_{[\tta',\ttb']}  \KS(\muN, \mu_{\infty}).
\end{align*}
Since $n\leq 2k-1$, with $\eta = 1/(16k^2) \leq 1/(2(2k)^2) \leq 1/(2n^2)$, \cref{thm:poly_bd} (which is independent of this result) yields the bound 
\begin{align*}
    \|\op{n}(\cdot\,;\mu_\infty)\|_{[\tta',\ttb']}
    &= \max_{x\in[-1-\eta,1+\eta]} \left| \op{n} \left(\frac{\tta+\ttb}{2}+\frac{\ttb-\tta}{2}x ;\mu_\infty\right)\right|
    \\&\leq 2 \max_{x\in[-1,1]} \left| \op{n} \left( \frac{\tta+\ttb}{2}+\frac{\ttb-\tta}{2}x ;\mu_\infty\right)\right|
    \\&= 2 \| p_n(\cdot\,;\mu_\infty) \|_{[\tta,\ttb]} 
    \leq 2 \opMax{k}(\mu_\infty;[\tta,\ttb]).
\end{align*}

Since $n \leq 2k$, using \cref{eqn:KS_assum} and our bound on $\opMax{k}(\mu_\infty;[\tta,\ttb])$, we obtain a bound for the modified moments
\begin{align*}
    |\mom{n}(\muN;\mu_\infty) - \mom{n}(\mu_{\infty};\mu_\infty)|
    &\leq 2 (2k)^2  2\opMax{k}(\mu_\infty;[\tta,\ttb]) \KS(\muN, \mu_{\infty})
    \\&\leq  16 k^{2} \frac{2}{\sqrt{L}}  \left(\frac{16k^2}{\ttb-\tta}\right)^{\gamma/2} N^{-\alpha}.
\end{align*}
Thus, using our assumption on $k$ and that $n\leq 2k-1$,
\begin{equation*}
    \momDel{n}(\muN,\mu_\infty;\mu_\infty)
    =\max_{n\leq 2k-1} |\mom{n}(\muN;\mu_\infty) - \mom{n}(\mu_{\infty};\mu_\infty)|
    \leq c.
\end{equation*}
The lemma is established.
\end{proof}

\subsection{Other proofs}
\label{sec:thm:auxiliary_proofs}

\begin{proof}[Proof of \cref{thm:poly_bd}]
It is well-known that for any $x \in \mathbb{R}\setminus[-1,1]$,
\[
|p(x)|
\leq |T_n(x)|
,\qquad \forall p : \deg(p) \leq n, \|p\|_{[-1,1]} \leq 1.
\]
Thus, it suffices to show 
\begin{equation*}
    |T_n(x)| \leq 2,
    \qquad x\in[-1-1/(2n^2), 1+1/(2n^2)].
\end{equation*}
We will in fact show $|T_n(z)| \leq 2$ for all $z\in E$, where 
$E := \{ (w + w^{-1})/2 : 1\leq |w| \leq r + \sqrt{r^2-1} \}$ is the Bernstein ellipse with rightmost point $r=1+1/(2n^2)$. 

Suppose $J(w) = \frac{1}{2}(w+w^{-1})$.
Then, it is well known that, for any $n\geq 0$,
\begin{equation*}
    T_n(J(w)) = \frac{1}{2}\left( w^n + w^{-n} \right).
\end{equation*}
Let $z=J(w)$ be an arbitrary point on the boundary of the Bernstein ellipse $E$. 
Set $\rho = |w|$ so
\begin{equation*}
    |T_n(z)| \leq \frac{1}{2} \left( \rho^n + \rho^{-n} \right).
\end{equation*}
Suppose $n\geq 2$ and let $\rho = \rho(n) = 1+\ln(2+\sqrt{3})/n$.
Then, \begin{equation*}
    \lim_{n\to\infty}
    \frac{1}{2} \left(\rho(n)^{n} + \rho(n)^{-n}\right)
    = 2.
\end{equation*}
By direct computation, one verifies that
\begin{align*}    \frac{\d}{\d{n}} \left(\rho(n)^n + \rho(n)^{-n}\right)
    &= \left(\rho(n)^n - \rho(n)^{-n}\right)\left(\ln(\rho(n)) - 1 + \rho(n)^{-1} \right).
\end{align*}
We always have
\begin{equation*}    
    \ln(\rho(n)) - 1 + \rho(n)^{-1}
    > 0
    ,\qquad \rho(n) > 1.
\end{equation*}
Since $\rho(n)^n$ is monotonically increasing with $n$ and $\rho(n)^{-n}$ is monotonically decreasing with $n$, 
\begin{equation*}
    \rho(n)^{n} - \rho(n)^{-n}
    \geq \bigg(1 + \frac{\ln(2+\sqrt{3})}{2}\bigg)^{2} - \bigg(1 + \frac{\ln(2+\sqrt{3})}{2}\bigg)^{-2}
    > 0.
\end{equation*}
Thus, the convergence of $(\rho(n)^n + \rho(n)^{-n})/2$ to $2$ is monotonic from below. 
This implies  $|T_n(z)|\leq 2$ for $z\in E_\rho$ and $n\geq 2$.

Now, note that 
\begin{align*}
\label{eqn:Erho_zbd}
    \frac{1}{2} \left(\rho(n)+\rho(n)^{-1} \right)
    &= 1 + \frac{1}{2n^2}\bigg( \ln(2+\sqrt{3})^2 - \frac{\ln(2+\sqrt{3})^3}{\ln(2+\sqrt{3})+n} \bigg)
    \\&\geq 1 + \frac{1}{2n^2}\bigg( \ln(2+\sqrt{3})^2 - \frac{\ln(2+\sqrt{3})^3}{\ln(2+\sqrt{3})+2} \bigg)
    \geq 1 + \frac{1}{2n^2}.
\end{align*}
This implies $E\subseteq E_\rho$ for $n\geq 2$.
Clearly $\pT_0(z) = 1 \leq 2$ and $ |\pT_1(z)| =  |z| \leq 2$ for all $z\in E$.
Thus, for all $n \geq 0$,
\begin{equation*}
    |T_n(z)|
    \leq 2
    ,\qquad z\in E.
\end{equation*}
The result follows since $[-1-1/(2n^2), 1+1/(2n^2)] \subseteq E$.
\end{proof}

\begin{proof}[Proof of \cref{thm:assoc_p}]
Suppose the lemma holds for $i<n$. 
Then, 
\begin{align*}
    d_n(x) &= 2x d_{n-1}(x) - d_{n-2}(x) + 2 f_{n-1}
    \\&= 2 x \Bigg( U_{n-2}(x) f_0 + 2 \sum_{i=2}^{n-1} U_{n-1-i}(x) f_{i-1} \Bigg) 
     - \Bigg( U_{n-3}(x) f_0 + 2 \sum_{i=2}^{n-2} U_{n-2-i}(x) f_{i-1} \Bigg) + 2f_{n-1}
    \\&= \big( 2x U_{n-2}(x) - U_{n-3}(x) \big)f_0 + 2 \Bigg(\sum_{i=2}^{n-2} \big(2x U_{n-1-i}(x) - U_{n-2-i}(x) \big)f_{i-1} \Bigg)  
    + 4xf_{n-2} + 2f_{n-1}
    \\&= U_{n-1}(x) f_0 + 2 \Bigg(\sum_{i=2}^{n-2} U_{n-i}(x) f_{i-1} \Bigg) + 2 U_1(x) f_{n-2} + 2U_0(x) f_{n-1}
    \\&= U_{n-1}(x) f_0 + 2 \sum_{i=2}^{n} U_{n-i}(x) f_{i-1}.
\end{align*}
The result follows as the base case is assumed.
\end{proof}

\printbibliography
\end{document}